\newcommand\myChanged[1]{#1}
\newcommand\myRemoved[1]{}
\newtheorem{theorem}{Theorem}[section]
\theoremstyle{definition}
\newtheorem{lemma}[theorem]{Lemma}
\newtheorem{definition}[theorem]{Definition}
\newtheorem{remark}[theorem]{Remark}
\newtheorem{corollary}[theorem]{Corollary}
\newtheorem{conjecture}[theorem]{Conjecture}
\newtheorem{example}[theorem]{Example}
\def\BZ{\mathbb Z}
\def\BQ{\mathbb Q}
\def\BC{\mathbb C}
\def\BH{\mathbb H}
\def\otet{\mathrm{otet}}
\def\ntet{\mathrm{ntet}}
\def\PGL{\mathrm{PGL}}
\def\PSL{\mathrm{PSL}}
\def\tman{tetrahedral manifold}
\def\Tman{Tetrahedral manifold}
\def\ctt{combinatorial tetrahedral tessellation}
\def\mg{\textcolor{red}{$\diamondsuit$}}
\newcommand{\myComment}[1]{}
\begin{document}

%%%%%%%%%%%%%%%%%%%%%%{page1}

\title[A census of tetrahedral hyperbolic manifolds]{
A census of tetrahedral hyperbolic manifolds}
\author[Fominykh]{Evgeny Fominykh}
\address{Laboratory of Quantum Topology \\
         Chelyabinsk State University \\ 
         Chelyabinsk, 454001, Russia, and Institute of Mathematics 
         and Mechanics \\ 
         Ekaterinburg, 620990, Russia}
\email{efominykh@gmail.com}
\author[Garoufalidis]{Stavros Garoufalidis}
\address{School of Mathematics \\
         Georgia Institute of Technology \\
         Atlanta, GA 30332-0160, USA \newline 
         {\tt \url{http://www.math.gatech.edu/~stavros}}}
\email{stavros@math.gatech.edu}
\author[Goerner]{Matthias Goerner} 
\address{Pixar Animation Studios\\ 
         1200 Park Avenue\\ 
         Emeryville, CA 94608, USA \newline
         {\tt \url{http://www.unhyperbolic.org/}}}
\email{enischte@gmail.com}
\author[Tarkaev]{Vladimir Tarkaev}
\address{Laboratory of Quantum Topology \\
         Chelyabinsk State University \\ 
         Chelyabinsk, 454001, Russia}
\email{trk@csu.ru}
\author[Vesnin]{Andrei Vesnin}
\address{Sobolev Institute of Mathematics \\ 
         Siberian Branch of the Russian Academy of Sciences \\
         Novosibirsk, 630090, Russia \newline
         {\tt \url{http://www.math.nsc.ru/~vesnin}}}
\email{vesnin@math.nsc.ru}
\thanks{
1991 {\em Mathematics Classification.} Primary 57N10. Secondary 57M25.
\newline
{\em Key words and phrases: hyperbolic 3-manifolds, regular ideal tetrahedron,
census, tetrahedral manifolds, Bianchi orbifolds.}
}

\date{October 6, 2015}%\today}

%\dedicatory{\large{\bf Private notes. Please do not 
%distribute under any circumstances!}}

\begin{abstract}
We call a cusped hyperbolic 3-manifold {\em tetrahedral} if it can be 
decomposed into regular ideal tetrahedra. Following an earlier 
publication by three of the authors, we give a census of all tetrahedral 
manifolds and all of their combinatorial tetrahedral tessellations
with at most 25 (orientable case) and 21 (non-orientable case) tetrahedra. 
Our isometry classification uses certified canonical cell decompositions 
(based on work by Dunfield, Hoffman, Licata) and isomorphism signatures 
(an improvement of dehydration sequences by Burton). The tetrahedral census 
comes in \texttt{Regina} as well as \texttt{SnapPy} format, and we 
illustrate its features. 
\end{abstract}

\maketitle

\tableofcontents

\section{Introduction}
\label{sec.intro}

\subsection{\Tman s}
%\label{sub.goal}
\label{sub.goal}

We call a cusped hyperbolic 3-manifold {\em tetrahedral} if it can be 
decomposed into regular ideal tetrahedra. The combinatorial data of this 
decomposition is captured in the {\em \ctt} which can be defined simply as an 
ideal triangulation where all edges have order 6. By Mostow rigidity, a 
combinatorial tetrahedral tessellation \myRemoved{removed uniquely} 
determines a \tman. However, there might be several non-isomorphic 
\myChanged{(i.e., not related by just relabeling tetrahedra and vertices)} 
\ctt s yielding the same tetrahedral 
manifold. That is why we introduce the two terms \tman{} and \ctt{} to 
distinguish whether we regard isometric or combinatorially isomorphic objects 
as equivalent.

The \tman{} were also called {\em maximum volume} in~\cite{A,VMF,VTF1, VTF2} 
because they are precisely the ones with maximal volume among all hyperbolic 
manifolds with a fixed number of tetrahedra. Thus, they also appear at the 
trailing ends of the \texttt{SnapPy} \cite{SnapPy} census manifolds sharing 
the same 
letter\footnote{The case of the letter \texttt{m} is exceptional because it 
spans several number of tetrahedra for purely historic reasons.} (e.g., 
\texttt{m405} to \texttt{m412}, \texttt{s955} to 
\texttt{s961}, \texttt{v3551}, \texttt{t12833} to \texttt{t12845}, 
\texttt{o9\_44249}). Moreover, the number of tetrahedra and the Matveev 
complexity~\cite{Matveev} also coincides for these manifolds.

The census of \tman s illustrates a number of phenomena of arithmetic
hyperbolic manifolds including symmetries visible
in the canonical cell decomposition \myChanged{but hidden by} the \ctt. 
In particular, the 
canonical cell decomposition might have non-tetrahedral cells.

Several manifolds that have played a key role in the development of hyperbolic
geometry are tetrahedral, e.g., the complements of the figure-eight knot, the
minimally twisted 5-chain link (which conjecturally is also the minimum 
volume orientable hyperbolic manifold with 5 cusps) and the Thurston 
congruence link.
The last two have the special property that their \ctt{} is maximally 
symmetric, i.e., any tetrahedron can be taken to any other tetrahedron in 
every orientation-preserving configuration via a combinatorial isomorphism. 
One of the authors has classified link complements with this special 
property in previous work \cite{Goerner}.

We also construct several new links with tetrahedral complement.

\subsection{Our results and methods} 
\label{sub.results}

Our main goals   (see \cite{Goerner:tetcensus} for the data) are the 
creation of
\begin{enumerate}
\renewcommand{\theenumi}{\alph{enumi}}
\item \label{goal:ctt}
The census of \ctt s up to 25 (orientable case), respectively, 21 
(non-orientable case) tetrahedra. 
\item \label{goal:homeoCtt}
The grouping by isometry type and the corresponding canonical cell 
decompositions.\\
We ship this as a \texttt{Regina} \cite{Regina} file containing 
triangulations in a 
hierarchy reflecting the grouping.
\item \label{goal:tman}
The corresponding census of \tman s.\\
We ship this as a \texttt{SnapPy} census containing a representative 
triangulation for each isometry type.
This census can be used just like any other \texttt{SnapPy} census.
\item \label{goal:morphisms}
The list of covering maps between the \ctt s.
\end{enumerate}

For~(\ref{goal:ctt}), we use a new approach differing from the traditional 
one that starts by enumerating 4-valent graphs used first by 
Callahan-Hildebrand-Weeks~\cite{CHW} \myChanged{or variations of the 
traditional approach such as by Burton and Pettersson \cite{Burton:edges}}. 
The advantage of our new approach is 
that it scales to a substantially higher number of tetrahedra because it 
allows for early pruning of triangulations with edges of wrong order. We also 
deploy isomorphism signatures to avoid recounting combinatorially isomorphic 
triangulations. Recall that the isomorphism signature is an improvement 
by Burton~\cite{burton:encode}  of the (non-canonical) dehydration sequences. 
It is a complete invariant of the combinatorial isomorphism type of a 
triangulation. Algorithms~~1 and~2 used for the enumeration of \ctt s are 
described  in Section~\ref{section2}.  
Isomorphism signatures of orientable  \ctt s with at most seven tetrahedra 
are presented in Table~\ref{table:isomorph}. 

For~(\ref{goal:homeoCtt}), we use a new invariant we call the 
{\em isometry signature} (see Section~\ref{section3}). It is a complete 
invariant of the isometry type 
of a cusped hyperbolic 3-manifold. It is defined as the isomorphism 
signature of the canonical retriangulation of the canonical cell 
decomposition ~\cite{EP}. To compute it, we use exact arithmetic to certify 
the canonical cell decomposition even when the cells are not tetrahedral,
expanding on work by Dunfield, Hoffman, Licata~\cite{DHL}.

For~(\ref{goal:morphisms}), we wrote a script that finds combinatorial 
homomorphisms from a triangulation to another triangulation.

Several of the techniques here are new and can be generalized: The 
{\em isometry
signature} is an invariant that is defined for any finite-volume cusped 
hyperbolic
3-manifolds. It is a complete isometry invariant (and thus by Mostow 
rigidity a complete homotopy
invariant) that can be effectively computed and,
in general, be certified whenever the manifold is orientable and the 
canonical cell
decomposition contains only tetrahedral cells using 
{\tt hikmot}~\cite{hikmot}.
We also provide an improvement of the code provided in \cite{DHL} 
to certify
canonical triangulations that is simpler and generalizes to any number 
of cusps.

\myRemoved{After reading Burton/Petterson's paper more, I realized that 
their approach shares more with the traditional one than with the one 
taken here.}

Applying the above discussed methods we obtain the following result.

\begin{theorem}
The number of  \ctt s and \tman s 
up to 25 tetrahedra for orientable manifolds and up to 21 tetrahedra for 
non-orientable manifolds are listed in Table~\ref{table:censusNumbers}.  
\end{theorem}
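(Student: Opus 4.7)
The plan is to carry out the enumeration promised by the theorem in two stages, matching the two columns of Table~\ref{table:censusNumbers}: first count \ctt s up to combinatorial isomorphism, then collapse them into isometry classes to count \tman s. Throughout, the bound $n\leq 25$ (orientable) or $n\leq 21$ (non-orientable) is a hard computational cap, so every step must be organized around aggressive pruning.

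For the count of \ctt s I would run the face-gluing enumeration of Algorithms 1 and 2 from Section~\ref{section2}. Starting from $n$ disjoint tetrahedra, I would recursively glue unglued face pairs, maintaining the partial edge-identification structure after each gluing. The crucial pruning step is that a \ctt{} has all edges of order $6$; therefore any partial triangulation in which some already-closed edge cycle has length different from $6$, or in which some open edge cycle already has length greater than $6$, is discarded immediately. To avoid counting combinatorially isomorphic objects multiple times, I would compute Burton's isomorphism signature~\cite{burton:encode} of each completed triangulation and keep one representative per signature, yielding the left column of Table~\ref{table:censusNumbers} (and, as a check, reproducing Table~\ref{table:isomorph} for $n\leq 7$).

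For the count of \tman s I would group the surviving \ctt s by their underlying hyperbolic 3-manifold. This is exactly what the isometry signature introduced in Section~\ref{section3} computes: for each \ctt, use \texttt{SnapPy} to find a numerical candidate for the Epstein--Penner canonical cell decomposition~\cite{EP}, certify it via the exact-arithmetic extension of Dunfield--Hoffman--Licata~\cite{DHL} described in the paper (which now handles non-tetrahedral cells and any number of cusps), canonically retriangulate, and take the isomorphism signature of the retriangulation. Two \ctt s give the same \tman{} if and only if their isometry signatures agree; counting equivalence classes gives the right column of the table.

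The main obstacle I expect is neither step in isolation but their interaction at the top of the range. The number of partial triangulations explodes well before $n=25$, so the early edge-order pruning must be implemented so that the order-$6$ constraint is propagated through every partial closure; losing this leads to an infeasible search. The secondary obstacle is the certification in stage two: when the Epstein--Penner decomposition has non-tetrahedral cells (which, as the introduction warns, does occur in this census), the numerical output of the standard canonical algorithm must be promoted to a proof, and the extended exact-arithmetic routine has to handle the resulting generalized cells uniformly. Once both issues are under control, assembling the final counts in Table~\ref{table:censusNumbers} is bookkeeping.
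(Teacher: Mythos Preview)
Your proposal is correct and follows essentially the same two-stage approach as the paper: enumerate \ctt s via Algorithms~1 and~2, then group by the certified isometry signature of Section~\ref{section3}. Two small discrepancies in your description of Algorithm~1 are worth flagging, since they bear directly on the feasibility concern you raise. First, the paper does not start from $n$ disjoint tetrahedra; it begins with a single tetrahedron and grows the triangulation by either gluing an open face to a \emph{new} tetrahedron or to another open face, keeping the partial object connected throughout. Second, isomorphism signatures are computed and checked against \texttt{already\_seen} at \emph{every} intermediate stage, not only for completed triangulations; this is what prevents the recursion from revisiting isomorphic partial states and is essential for reaching $n=25$. With those two adjustments your plan coincides with the paper's.
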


\begin{table}
\caption{Number of triangulations in the census.\label{table:censusNumbers}}
\bigskip
\begin{tabular}{r||r|r||r|r||r}
& \multicolumn{2}{|c||}{combinatorial} & \multicolumn{2}{|c||}{tetrahedral} 
& \multicolumn{1}{c}{homology} \\
& \multicolumn{2}{|c||}{tet. tessellations} & \multicolumn{2}{|c||}{manifolds} 
& \multicolumn{1}{c}{links} \\
Tetrahedra & orientable & non-or. & orientable & non-or. & \\ \hline \hline
1 & 0 & 1 & 0 & 1 & 0 \\ \hline
2 & 2 & 2 & 2 & 1 & 1 \\ \hline
3 & 0 & 1 & 0 & 1 & 0 \\ \hline
4 & 4 & 4 & 4 & 2 & 2 \\ \hline
5 & 2 & 12 & 2 & 8 & 0 \\ \hline
6 & 7 & 14 & 7 & 10 & 0 \\ \hline
7 & 1 & 1 & 1 & 1 & 0 \\ \hline
8 & 14 & 10 & 13 & 6 & 5 \\ \hline
9 & 1 & 6 & 1 & 6 & 0 \\ \hline
10 & 57 & 286 & 47 & 197 & 12 \\ \hline
11 & 0 & 17 & 0 & 17 & 0 \\ \hline
12 & 50 & 117 & 47 & 80 & 7 \\ \hline
13 & 3 &8 & 3 & 8 & 0 \\ \hline
14 & 58 &134 & 58 & 113 & 25 \\ \hline
15 & 91 & 975 & 81 & 822 & 0 \\ \hline
16 & 102 & 175 & 96 & 142 & 32 \\ \hline
17 & 8 & 52 & 8 & 52 & 0 \\ \hline
18 & 213 & 1118 & 199 & 810 & 66 \\ \hline
19 & 25 & 326 & 25 & 326 & 0 \\ \hline
20 & 1886 & 26320 & 1684 & 22340 & 209 \\ \hline
21 & 31 & 251 & 31 & 251 & 0 \\ \hline
22 & 390 &- & 381 & - & 148 \\ \hline
23 & 58 &- & 58 & - & 0 \\ \hline
24 & 1544 & - & 1465 & - & 378 \\ \hline
25 & 7563 & - & 7367 & - & 0
\end{tabular}
\end{table}

All  \ctt s and \tman s indicated in Table~\ref{table:censusNumbers} are 
enumerated in supplement files available in~\cite{Goerner:tetcensus}.  

Knots and links with tetrahedral complement are shown in 
Figures~\ref{f.tetlinks}, \ref{f.tetlinks12} and \ref{fig.remarkable}.

\subsection{Features of the tetrahedral census}

Properties of tetrahedral manifolds that make them 
interesting to study include:

\begin{itemize}
\item
\myChanged{The tetrahedral manifolds are arithmetic as they are a proper 
subset of the commensurability class of figure-eight knot complement, 
closed under finite coverings, see Section~\ref{sec:Margulis}.}

\myComment{
All tetrahedral manifolds are arithmetic and commensurable with the 
figure-eight
knot complement, see Lemma~\ref{lem.arithmetic}. The converse is not true: 
In Remark~\ref{rem.nottrue}, we list examples of 
manifolds commensurable with the figure-eight knot which are not tetrahedral.

\item
The tetrahedral census is closed under finite (possibly irregular) coverings. 
We investigate the resulting category in Section~\ref{section:cttCat}.
}

\item
The tetrahedral manifolds are exactly those with maximal volume among all
cusped hyperbolic manifolds with a fixed number of tetrahedra.
\item 
Their Matveev complexity equals the number of regular ideal tetrahedra.

\item
Many \ctt s \myChanged{hide symmetries, i.e, there are isometries of the 
corresponding \tman{} that are not induced from a combinatorial 
isomorphism of the \ctt.}

\item 
A substantial fraction of tetrahedral manifolds are link complements.
\end{itemize}

\section{The enumeration of \ctt s} 
\label{section2} 

\begin{algorithm}
\SetKwProg{Proc}{Procedure}{}{end}%
\SetKwProg{Fn}{Function}{}{end}%
\SetKwFunction{Search}{RecursiveFind}%
\SetKwFunction{Main}{FindAllTetrahedralTessellations}%
\Fn{\Main{integer max, bool orientable}}{
\KwResult{Returns all (non-)orientable tetrahedral tessellations up to 
combinatorial isomorphism with at most \ArgSty{max} tetrahedra.}
~

result $\leftarrow \{\}$ \tcc*{resulting triangulations}
already\_seen $\leftarrow \{\}$ \tcc*{isomorphism signatures encountered 
earlier}
~

\Proc{\Search{Triangulation $t$}}{
\KwResult{Searches all triangulations obtained from $t$ by gluing faces 
or adding tetrahedra.}
~

\tcc{Close order 6 edges and reject unsuitable triangulations}
\If{ \FuncSty{FixEdges(}$t$\FuncSty{)} = ``valid'' }{
\tcc{Skip triangulations already seen earlier}
\If{ \FuncSty{isomorphismSignature(}$t$\FuncSty{)} $\not\in$ already\_seen}{
already\_seen $\leftarrow$ already\_seen $\cup$ 
\{\FuncSty{isomorphismSignature(}$t$\FuncSty{)}\}\;

\If{ t has no open faces }{
\tcc{t orientable by construction if orientable = true}
\If{t is non-orientable or orientable = true}{
result $\leftarrow$ result $\cup ~\{t\}$\;
}
}
\Else{
\tcc{This choice results in faster enumeration}
choose an open face $F_1$=(tetrahedron, $f_1$) of $t$ adjacent to an 
open edge of highest order\;
\If{t has less than max tetrahedra}{
\FuncSty{ResursiveFind(}\ArgSty{t} with a new tetrahedron glued to $F_1$ 
via an odd permutation\FuncSty{)}
}

\For{each open face $F_2 \not=F_1$ of t}{
\For{each $p \in S_4$}{
\If{$p(f_1)=f_2$}{
\If{p is odd or orientable = false}{
\FuncSty{RecursiveFind(}$t$ with $F_1$ glued to $F_2$ via $p$\FuncSty{)}\;
}
}
}
}
}
}
}
}
\FuncSty{RecursiveFind(}triangulation with one unglued tetrahedron\FuncSty{)}\;
\KwRet{result}
}
\caption{The main function to enumerate all tetrahedral tessellations.}
\label{algo:main}
\end{algorithm}

\begin{algorithm}
\SetKwProg{Fn}{Function}{}{end}
\SetKwFunction{FRecurs}{FixEdges}%
\Fn{\FRecurs{Triangulation t}}{
\KwResult{t is modified in place. Returns``valid'' or ``invalid''.}
\While{t has open edge e of order 6}{
close edge $e$\;
}
\KwRet{``valid'' if every edge e
\begin{itemize}\vspace{-1mm}\setlength{\itemsep}{0pt}\item has order $<6$ 
(if open) or $=6$ (if closed) and\item has no projective plane as vertex 
link.\end{itemize}}
}
\caption{A helper function closing order 6 edges and rejecting 
triangulations which cannot result in tetrahedral tessellations.}
\label{algo:fixEdges}
\end{algorithm}

We use Algorithm~\ref{algo:main} to enumerate the \ctt s. The input is the 
maximal number of tetrahedra to be considered and a flag indicating whether we
wish to enumerate the orientable or the non-orientable tessellations. The 
result is a set of ideal triangulations where each edge has order 6 
resulting in manifolds of the desired orientability.

As pointed out in the introduction our algorithm differs from the 
traditional approach:
we recursively try all possible ways open faces can be face-paired without
enumerating 4-valent graphs first. This will, of course, result in many 
duplicates, so
we keep a set of isomorphism signatures (see \cite{burton:encode}) of 
previously encountered triangulations
around to prevent recounting. Recall that an isomorphism signature is, 
unlike a dehydration sequence, a complete invariant of the combinatorial 
isomorphism type of a triangulation.

The advantage of this approach is that we can insert a procedure that can 
prune the
search space early on. In our case, this procedure is given in
Algorithm~\ref{algo:fixEdges} and rejects ideal triangulations where edges 
have the wrong
order. It also rejects ideal triangulations with non-manifold topology. 
These can occur
when the tetrahedra around an edge cannot be oriented consistently and the 
vertex link of
the center of the edge becomes a projective plane $\mathbb{RP}^2$.

The algorithm has been implemented using \texttt{Regina} and we briefly 
recall how a triangulation is presented. The vertices of each tetrahedron 
are indexed 0, 1, 2, 3 and the faces are indexed by the number of the vertex 
opposite  to it. Triangulations in intermediate stages will have unpaired 
faces. We call a face open if it is unpaired, otherwise closed.
A triangulation consists of a number of tetrahedra and for each 
tetrahedron $T_1$ and each face index $f_1=0, ..., 3$, we store two pieces 
of data to encode whether and how the face $F_1=(T_1,f_1)$ is glued to 
another face $F_2=(T_2,f_2)$ with face index $f_2$ of another 
(not necessarily distinct) tetrahedron $T_2$:
\begin{enumerate}
\item A pointer to $T_2$. If $F_1$ is an open face, this pointer is null. 
\item An element $p\in S_4$ such that $p(f_1) = f_2$ and the vertex 
$i\not =f_1$ of $T_1$ is glued to $p(i)$ of $T_2$.
\end{enumerate}

The face pairings implicitly determine edge classes. We call such an edge 
open if it is adjacent to an open face (necessarily so exactly two) and 
otherwise closed. Closing an open edge means gluing the two open adjacent 
faces by the suitable permutation.

The source for the implementation is in  
\texttt{src/genIsomoSigsOfTetrahedralTessellations.cpp}. See 
\cite[\texttt{data/}]{Goerner:tetcensus} for isomorphism signatures of all 
\ctt s from Table~\ref{table:censusNumbers} and Table~\ref{table:isomorph} 
for orientable tessellations with $n \leq 7$ tetrahedra. Also names of 
manifolds in the census are presented (see Section~\ref{section4}). 

\begin{table}[h]
\caption{Isomorphism signatures for all orientable \ctt s with $n \leq 7$ 
tetrahedra. \label{table:isomorph}}
\bigskip
\begin{tabular}{r|l|l||r|l|l}
n & Signatures &Name & n & Signatures & Name \cr \hline
2 & \texttt{cPcbbbdxm} & $\otet02_{0000}$ & 6 & \texttt{gLLPQccdfeefqjsqqjj} & $\otet06_{0000}$ \cr \hline 
2 & \texttt{cPcbbbiht} & $\otet02_{0001}$& 6 & \texttt{gLLPQccdfeffqjsqqsj} & $\otet06_{0001}$ \cr \hline 
4 & \texttt{eLMkbbdddemdxi} & $\otet04_{0000}$ &  6 & \texttt{gLLPQceefeffpupuupa} & $\otet06_{0002}$ \cr \hline 
4 & \texttt{eLMkbcddddedde} & $\otet04_{0001}$ & 6 & \texttt{gLMzQbcdefffhxqqxha} & $\otet06_{0003}$ \cr \hline
4 & \texttt{eLMkbcdddhxqdu} & $\otet04_{0002}$ & 6 & \texttt{gLMzQbcdefffhxqqxxq} & $\otet06_{0004}$ \cr \hline 
4 & \texttt{eLMkbcdddhxqlm} & $\otet04_{0003}$ & 6 & \texttt{gLvQQadfedefjqqasjj} & $\otet06_{0005}$ \cr \hline 
5 & \texttt{fLLQcbcedeeloxset} & $\otet05_{0000}$ & 6 & \texttt{gLvQQbefeeffedimipt} & $\otet06_{0006}$ \cr \hline 
5 & \texttt{fLLQcbdeedemnamjp} & $\otet05_{0001}$ & 7 & \texttt{hLvAQkadfdgggfjxqnjnbw} & $\otet07_{0000}$ \cr 
\end{tabular}
\end{table}

\section{The isometry signature} 
\label{section3} 

In the previous section, we enumerated all \ctt s with a given maximal 
number of tetrahedra up to combinatorial isomorphism. In the next step,
we want to find the equivalence classes of those \ctt s yielding the same 
\tman{} up to isometry.

We do this by grouping \ctt s by their {\em isometry signature} which we 
define, compute and certify in this section. To summarize, the isometry 
signature is the isomorphism signature of the canonical retriangulation 
of the canonical cell decomposition. If, however, the canonical cell 
decomposition has simplices as cells, we short-circuit and just use the 
isomorphism signature of the canonical cell decomposition itself.
We can certify the isometry signature by using exact computations to determine
which faces in the proto-canonical triangulation are transparent.

The code implementing the certified canonical retriangulation can be found in
\texttt{src/canonical\_o3.py}. The code to group (and name) the \ctt s by 
isometry signature
is in 
\texttt{src/identifyAndNameIsometricIsomoSigsOfTetrahedralTessellations.py}.

\subsection{Definition} \label{sec:defCanonicalCell}

Recall that the hyperboloid model of 3-dimensional hyperbolic space $\BH^3$ 
in (3+1)-Minkowski space (with inner product defined by 
$\langle x, y\rangle = x_0y_0 + x_1y_1+x_2y_2-x_3y_3$) is given by 
$$
S^+=\left\{x=(x_0,...,x_3) \,\, | \,\, 
x_3>0, \quad \langle x,x\rangle = -1\right\}.
$$

For a cusped hyperbolic manifold $M$, choose a horotorus cusp neighborhood 
of the same volume for each cusp. Lift $M$ and the cusp neighborhoods to 
$\BH^3\cong S^+$. The cusp neighborhoods lift to a $\pi_1(M)$-invariant set 
of horoballs. For each horoball $B\subset S^+$, there is a dual 
vector $v_B$ that is light-like (i.e., $\langle v_B, v_B \rangle =0$) and 
such that $w\in B \Leftrightarrow \langle v_B, w\rangle > -1$. 
The boundary of the convex hull of all $v_B$ has polygonal faces.

\begin{definition}
The {\em canonical cell decomposition} of $M$ is given by the radial 
projection of the polygonal faces of the boundary of the convex hull of 
all $v_B$ onto $S^+$.
\end{definition}

The canonical cell decomposition was introduced by Epstein and 
Penner~\cite{EP}. It does not depend on a particular choice of cusp 
neighborhoods as long as they all have the same volume, or equivalently, same
area. 

\begin{definition}
A triangulation which is obtained by subdividing the cells of the canonical
cell decomposition and inserting (if necessary) flat tetrahedra is called 
a {\em proto-canonical
triangulation}. If it contains no flat tetrahedra, i.e., all tetrahedra are 
positively
oriented, it is called a {\em geometric proto-canonical triangulation}. 
\end{definition}

The result of calling \texttt{canonize} on a \texttt{SnapPy} manifold is a 
proto-canonical triangulation. If the canonical cell decomposition has 
cells which are not ideal tetrahedra (non-regular or regular), there might 
be more than 
one proto-canonical triangulation of the same manifold. A face of a 
proto-canonical triangulation which is part of a 2-cell of the canonical 
cell decomposition is called {\em opaque}. Otherwise, a face is called 
{\em transparent}.

\begin{definition}
Consider a 2-cell in the canonical cell decomposition which is an $n$-gon. 
Pick
the suspension of such an $n$-gon by the centers of the two neighboring 
3-cells.
These suspensions over all 2-cells form a decomposition of $M$ into 
topological diamonds. 
Each diamond
can be split into $n$ tetrahedra along its central axis. The result is 
called the {\em canonical retriangulation}.
\end{definition}

The canonical retriangulation carries exactly the same information as the
canonical cell decomposition (just packaged as a triangulation) and thus
only depends on (and uniquely determines) 
the isometry type of the manifold. \texttt{SnapPy} uses it internally to 
compute, for example, the symmetry group of a hyperbolic manifold $M$ by 
enumerating the combinatorial isomorphisms of the canonical retriangulation 
of $M$. Similarly, \texttt{SnapPy} uses it to check whether two manifolds 
are isometric.

%To compute the symmetry (i.e.,
%the isometry) group of $M$, \texttt{SnapPy} internally 
%uses the canonical retriangulation and enumerates its combinatorial 
%automorphisms. Similarly, it checks whether two manifolds are isometric.%\\
%Given the opacities of the faces of a given proto-canonical triangulation, 
%\texttt{SnapPy} implements a purely combinatorial algorithm to compute the 
%canonical retriangulation from the given triangulation.

\begin{definition}
The {\em isometry signature} of $M$ is the isomorphism signature of the 
canonical retriangulation if the canonical cell decomposition has 
non-simplicial cells. Otherwise, it is the isomorphism signature of the 
canonical cell decomposition itself.
\end{definition}

\begin{example}
The triangulation of \texttt{m004} given in the \texttt{SnapPy} census 
already is the canonical cell decomposition. Thus, the isometry signature 
of the manifold \texttt{m004} is the isomorphism signature of the census 
triangulation,
namely \texttt{cPcbbbiht} presented in Table~\ref{table:isomorph}. In the 
census of tetrahedral hyperbolic manifolds \texttt{m004} named 
$\otet02_{0001}$. Recall that this manifold is the figure-eight knot 
complement.\\
The cell decomposition for  \texttt{m202} given in the \texttt{SnapPy} 
census is not canonical. The isomorphism signature of its \texttt{SnapPy} 
triangulation is \texttt{eLMkbbdddemdxi} presented in 
Table~\ref{table:isomorph}. In the census of tetrahedral hyperbolic 
manifolds  \texttt{m202} named $\otet04_{0000}$. Observe, that 
$\otet04_{0000}$ is the complement of a 2-component link presented in 
Figure~\ref{f.tetlinks}. The isometry signature of \texttt{m202} is 
\texttt{jLLzzQQccdffihhiiqffofafoaa} that is realized by a triangulation 
with ten tetrahedra. 
\end{example}

\subsection{Computation of the tilt}

Consider an ideal triangulation ${\mathcal T} = \cup_{i} T_{i}$ of a cusped 
manifold $M$ with a shape
assignment for each tetrahedron, i.e., a $z_i\in\BC\setminus\{0,1\}$ 
determining an embedding of the tetrahedron $T_{i}$ as ideal tetrahedron in 
$\BH^3$ up to isometry. If the shapes fulfill the consistency equations 
(also known as gluing equations) in 
logarithmic form and have positive imaginary parts, we call the triangulation
together with the shape assignment a {\em geometric ideal triangulation}.
Thurston shows that a geometric ideal triangulation glues up to a complete
hyperbolic structure on $M$. Given a geometric ideal triangulation and a 
face $F$ of it, the 
tilt $\mathrm{Tilt}(F)$ is a real number defined by Weeks~\cite{Weeks}
which determines whether a given triangulation is proto-canonical and
which faces are transparent.

We now describe how to compute $\mathrm{Tilt}(F)$ following 
the notation in \cite{DHL} and use it to determine the
canonical retriangulation.

\subsubsection{Computation of a cusp cross section}

The ideal tetrahedra intersect the boundary of a neighborhood of a cusp
in Euclidean triangles and we call the resulting assignment of lengths to 
edges a {\em cusp cross
section}. We first compute
a cusp cross section $C_c$ for some neighborhood of each cusp $c$ by picking an
edge $e_j$ for each cusp and assigning length $e_j=1$ to it. We recursively
assign lengths to the other edges by using that the ratio of two edge
lengths
is given by the respective $|z_i^*|$ where $z_i^*$ is one of the edge
parameters $z_i, z'_i=\frac{1}{1-z_i}, z''_i=1-\frac{1}{z_i}$:
\begin{equation*}
e_l =e_k \cdot |z_i^*|. \label{eqn:EdgeRatioCusp}
\end{equation*}

\subsubsection{Computation of  the cusp area}

We can compute the area of each Euclidean triangle $t$ as 
\begin{equation*}
A(t)=\frac{1}{2} e_k^{\myChanged{2}} \cdot \mathrm{Im}(z_i^*)
\end{equation*}
where $e_k$ and $z_i^*$ are as above. The cusp area $A(C_c)$ of the cusp cross 
section $C_c$ is simply the sum of the areas $A(t)$ over all its Euclidean 
triangles $t$.

\subsubsection{Normalization of the cusp area}

\myChanged{We need to scale} each cusp cross section \myChanged{to have the 
same target area $A$. 
The new edge lengths and areas are given by
\begin{equation*}
e'_l = e_l \cdot\sqrt{\frac{A}{A(C_c)}}\quad\mbox{and}
\quad A'(t)=A(t)\frac{A}{A(C_c)}.
\end{equation*}}

\subsubsection{Computation of the circumradius for each Euclidean triangle}

Let $R^i_v$ denote the circumradius of the Euclidean triangle $t$ that is
the cross section of the tetrahedron $i$ near vertex $v\in\{0,1,2,3\}$. If 
$e'_j$, $e'_k$, and
$e'_l$ are the edge lengths of $t$, elementary trigonometry implies 
\begin{equation*}
R^i_v = \frac{e'_j e'_k e'_l}{4 A'(t)}.
\end{equation*}

\subsubsection{Computation of the tilt of a vertex}

Compute 
\begin{equation} \label{eqn:Tilt}
\mathrm{Tilt}(i, v) = R^i_v - \sum_{u\not=v} R^i_u\ 
\frac{\mathrm{Re}(z_i^*)}{|z_i^*|}
\end{equation}
where $z_i^*$ is the edge parameter for the edge from $u$ to $v$.

\subsubsection{Computation of the tilt of a face}

If the face $F$ opposite to vertex $v$ of tetrahedron $i$ is glued to that 
opposite of
$v'$ of tetrahedron $i'$, the tilt of the face is defined as 
\begin{equation*} 
\mathrm{Tilt}(F) = \mathrm{Tilt}(i,v) + \mathrm{Tilt}(i',v').
\end{equation*}

\subsubsection{Determination of transparent faces and canonical 
retriangulation}

Weeks proves that \cite{Weeks}
a geometric ideal triangulation is a geometric proto-canonical 
triangulation if all
$\mathrm{Tilt}(F)\leq 0$. In that case, a face $F$ is transparent if and 
only if
$\mathrm{Tilt}(F)=0$. 

\texttt{SnapPy} implements an algorithm to compute the canonical 
retriangulation.
It can be refactored so that it takes as input the opacities of the faces 
and is
purely combinatorial. In case of a geometric (!) proto-canonical 
triangulation,
Weeks' arguments in the \texttt{SnapPy} code prove that this
algorithm works correctly.

For all manifolds we encountered, several randomization trials were always
sufficient to ensure that the ideal triangulation returned by \texttt{SnapPy}'s
\texttt{canonize} is always geometric proto-canonical. Thus, the result of the
purely combinatorial canonical retriangulation algorithm is known to be correct
as long as we certify the input to be a geometric proto-canonical triangulation
with certified opacities of its faces.

\begin{remark}
Even though we can certify the results for all listed manifolds in the
tetrahedral census, it is not known if 
\begin{itemize}
\item
every cusped hyperbolic manifold has a geometric proto-canonical triangulation,
\item
every cusped hyperbolic manifold has a geometric ideal triangulation.
\end{itemize}
Moreover, it is known that \texttt{SnapPy}'s
implementation can give the wrong canonical retriangulation if we use as
input a non-geometric (!)
proto-canonical triangulation. As pointed out by Burton, the triangulation 
\texttt{x101} in the non-orientable cusped \texttt{SnapPy} census is such 
an example
where flat tetrahedra cause \texttt{SnapPy} to give an incorrect canonical 
retriangulation.\\
It is unclear to the authors which of the following factors contribute to 
the incorrect result:
\begin{itemize}
\item Numerical precision issues.
\item \texttt{SnapPy}'s extension of the above definition of 
$\mathrm{Tilt}(F)$ to flat tetrahedra (where some
$A(t)=0$ and thus $R^i_v=\infty$) using \texttt{CIRCUMRADIUS\_EPSILON}.
\item Week's arguments for the purely combinatorial part of the canonical
retriangulation algorithm seem to implicitly assume that there are no 
flat-tetrahedra.
\end{itemize}
\myChanged{
The existence of geometric triangulations of a hyperbolic manifold can be 
proven when some tetrahedra are allowed to be flat
 \cite{PetronioWeeks:partiallyFlatTrig}. It can also be proven virtually 
\cite{LuoSchleimerTillmann:virtualGeometricTrig}.}
\end{remark}

\begin{figure}
\begin{center}
\includegraphics[height=3.8cm]{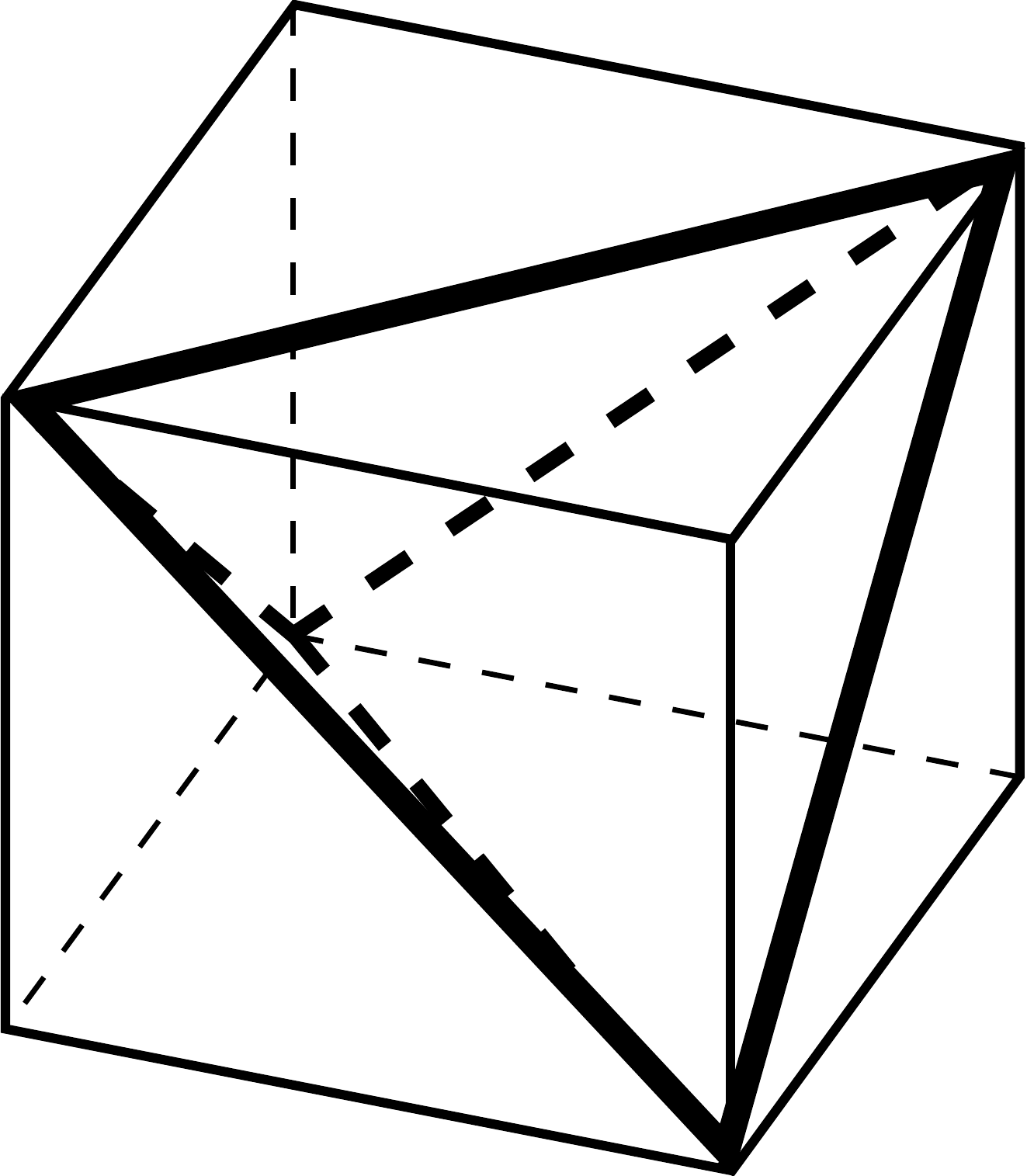}
\end{center}
\caption{Subdivision of a cube into 5 tetrahedra. For a regular ideal 
hyperbolic cube, all tetrahedra are again regular ideal. The subdivision 
introduced additional diagonals on the faces.\label{fig:tetInCube}}
\end{figure}

\begin{remark} \label{remark:burtonPair}
Call a manifold that can be decomposed into regular ideal cubes 
{\em cubical}. Recall that a regular ideal cube can be subdivided into 5 
regular ideal tetrahedra, see Figure~\ref{fig:tetInCube}. However, this 
does not imply that a cubical manifold is tetrahedral.\\
A counter-example is the manifold appearing in the census as 
$\texttt{x101}$ and $\texttt{x103}$. Its canonical cell decomposition 
consists of one regular ideal cube. As Burton explained~\cite{burton:x101}, 
$\texttt{x101}$ subdivides the cube into 5 regular ideal tetrahedra but 
needs to insert a flat tetrahedron to match the diagonals on the cube. 
Thus, it is not a tetrahedral manifold (but still has a tetrahedral 
double-cover $\ntet{10}_{0093}).$\\
\texttt{x103} splits the same cube into 6 non-regular tetrahedra and is 
a geometric proto-canonical triangulation.
\end{remark}

\subsection{Certification for tetrahedral manifolds}

Let $\sqrt{\BQ^+}$ denote the multiplicative group of all square roots of 
positive rational numbers and let $\BQ(\sqrt{\BQ^+})\subset \BC$ be the 
field generated by $\sqrt{\BQ^+}$.

\begin{lemma}
If we pick as target area $A=\sqrt{3}$, we have
for a geometric proto-canonical triangulation of a tetrahedral manifold $M$:
$$
z_i^* \in \BQ(\sqrt{-3});\qquad A(C_c)\in \BQ^+\sqrt{3}; 
$$  
$$ 
|z_i^*|, e_l, A(t), e'_l, A'(t), R^i_v \in \sqrt{\BQ^+};\qquad   
\mathrm{Tilt}(F)\in \BQ(\sqrt{\BQ^+}).
$$
\end{lemma}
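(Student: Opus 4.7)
The plan is to treat the first membership, $z_i^* \in \BQ(\sqrt{-3})$, as the substantive step, and then derive all the others by propagation through the formulas for the cusp cross section, its normalization, the circumradii, and the tilt, using only closure of $\sqrt{\BQ^+}$ under multiplication and division and of $\BQ(\sqrt{-3})$ under the arithmetic operations.

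For the first step, I would invoke the arithmetic structure of tetrahedral manifolds. As noted in Section~\ref{sec:Margulis}, every tetrahedral manifold $M$ is commensurable with the figure-eight knot complement and hence a finite cover of the Bianchi orbifold $\BH^3/\PSL(2, \calO_{-3})$, where $\calO_{-3} = \BZ[\zeta_6]$ is the ring of Eisenstein integers. After normalizing the developing map so that one regular ideal tetrahedron of the tessellation has vertices $\{\infty, 0, 1, \zeta_6\} \subset \BP^1(\BQ(\sqrt{-3}))$, all face pairings lie in $\PSL(2, \calO_{-3})$, so every ideal vertex of every lift of a tetrahedron to $\BH^3$ lies in $\BP^1(\BQ(\sqrt{-3}))$. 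Because the canonical cell decomposition is intrinsic to the hyperbolic structure and $\pi_1(M)$-equivariant, its 0-skeleton (the cusps) and therefore the vertices of any subdividing proto-canonical tetrahedron are drawn from the same set. The shape $z_i$ is then the cross ratio of four such points, and $z'_i = 1/(1-z_i)$ and $z''_i = 1-1/z_i$ are rational functions of $z_i$, so all three edge parameters lie in $\BQ(\sqrt{-3})$.

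Next I would propagate. Complex conjugation preserves $\BQ(\sqrt{-3})$, so $|z_i^*|^2 = z_i^*\overline{z_i^*}$ is both in $\BQ(\sqrt{-3})$ and real, hence positive rational, giving $|z_i^*| \in \sqrt{\BQ^+}$. Writing $z_i^* = a + b\sqrt{-3}$ with $a, b \in \BQ$ and $b>0$ produces $\mathrm{Im}(z_i^*) = b\sqrt{3} \in \BQ^+\sqrt{3}$. Induction on the recursion $e_l = e_k|z_i^*|$ starting from the base value $e_j=1$ places every unnormalized edge length in $\sqrt{\BQ^+}$. The formula $A(t) = \tfrac12 e_k^2\,\mathrm{Im}(z_i^*)$ then gives $A(t) \in \BQ^+\sqrt{3}$, and summing over triangles of the cross section yields $A(C_c) \in \BQ^+\sqrt{3}$. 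With target $A=\sqrt{3}$, the ratio $A/A(C_c)$ collapses to an element of $\BQ^+$, whose square root is in $\sqrt{\BQ^+}$, so closure of the multiplicative group $\sqrt{\BQ^+}$ carries $e'_l$, $A'(t)$, and $R^i_v = e'_j e'_k e'_l / (4A'(t))$ into $\sqrt{\BQ^+}$ (using $q\sqrt{3} = \sqrt{3q^2}$ to see $\BQ^+\sqrt{3} \subset \sqrt{\BQ^+}$). Finally, $\mathrm{Re}(z_i^*) \in \BQ$ and $|z_i^*| \in \sqrt{\BQ^+}$ make each summand of the tilt formula~(\ref{eqn:Tilt}) lie in $\BQ\cdot\sqrt{\BQ^+} \subset \BQ(\sqrt{\BQ^+})$, forcing $\mathrm{Tilt}(i,v)$ and hence $\mathrm{Tilt}(F)$ into $\BQ(\sqrt{\BQ^+})$.

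The main obstacle is the first step: ensuring that the tetrahedra of a \emph{proto-canonical} triangulation, which need not coincide with the tetrahedra of the original tetrahedral tessellation, still have vertices in $\BP^1(\BQ(\sqrt{-3}))$. This reduces to the intrinsic nature and $\pi_1(M)$-equivariance of the canonical cell decomposition, together with the observation that subdividing one of its cells into tetrahedra introduces no new ideal vertices. Granted that, the rest is a routine chain of closure arguments within the group $\sqrt{\BQ^+}$ and the field $\BQ(\sqrt{-3})$.
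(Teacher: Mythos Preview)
Your proposal is correct and follows essentially the same route as the paper: first pin down that all ideal vertices lie in $\BP^1(\BQ(\sqrt{-3}))$ by lifting to the regular tessellation $\{3,3,6\}$, deduce $z_i^*\in\BQ(\sqrt{-3})$, and then chase the cusp-cross-section and tilt formulas. Your derivation of $A(C_c)\in\BQ^+\sqrt{3}$ by summing the individual triangle areas $A(t)=\tfrac12 e_k^2\,\mathrm{Im}(z_i^*)\in\BQ^+\sqrt{3}$ is a mild variant of the paper's argument, which instead develops the cusp cross section in $\BC$ with the length-$1$ edge placed at $0,1$, observes that all vertices then lie in $\BQ(\sqrt{-3})$, and reads off the area of a fundamental parallelogram spanned by two such numbers. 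Both arguments are straightforward.

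One slip worth correcting: it is not true that every tetrahedral manifold is a finite cover of the Bianchi orbifold $\BH^3/\PSL(2,\calO_{-3})$. The paper itself observes that $\otet05_{0000}$ and $\otet06_{0000}$ are not (their \ctt s are not two-colorable), and for non-orientable tetrahedral manifolds some deck transformations are orientation-reversing and hence not in $\PSL(2,\BC)$ at all. What you actually need, and what the paper uses, is only that $M$ covers $\mathfrak{R}=\BH^3/\mathrm{Isom}(\{3,3,6\})$, so that the face pairings lie in $\mathrm{Isom}(\{3,3,6\})$ and therefore preserve the vertex set of $\{3,3,6\}$, which sits in $\BP^1(\BQ(\sqrt{-3}))$. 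With that replacement your argument is complete.
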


\begin{proof}
$M$ and thus its universal cover can be decomposed into regular ideal 
tetrahedra. The resulting regular tessellation in $\BH^3$ can be chosen 
to have vertices at $\BQ(\sqrt{-3})$ (also see Section~\ref{sec.properties}), 
thus the shapes of any ideal triangulation of $M$ are in $\BQ(\sqrt{-3})$.\\
Develop a cusp cross section constructed above in $\BC$ such that the 
vertices of the edge set to length 1 are at 0 and 1. Then all vertices 
have complex coordinates in $\BQ(\sqrt{-3})$ and a fundamental domain in 
$\BC$ for the cusp is a parallelogram spanned by two complex numbers in 
$\BQ(\sqrt{-3})$. 
The area $A(C_c)$ of such a parallelogram is in $\BQ^+\sqrt{3}$.\\ 
The rest follows from the above formulas. 
\end{proof}

We can represent a $z_i^*$ exactly by $r_1+r_2\sqrt{\myChanged{-}3}$ with 
$r_1, r_2\in\BQ$. We can represent the other quantities exactly using 
Corollary \ref{cor.Galois}
below.

\begin{lemma}
\label{lem.Galois}
Let $p_1,\dots,p_r$ denote a list of distinct prime numbers and
$K=\BQ(\sqrt{p_1},\dots,\sqrt{p_r})$ denote the corresponding number field.
Then,
\begin{itemize}
\item[(a)]
$K/\BQ$ is Galois with Galois group $G(K/\BQ)=(\BZ/2\BZ)^r$.
\item[(b)]
If $\BQ \subset L \subset K$ is a subfield of $K$ such that $[L:\BQ]=2$
then $L=\BQ(\sqrt{p_I})$ where $p_I=\prod_{i \in I} p_i$ for some nonempty
$I \subset \{1,\dots,r\}$.
\item[(c)]
The $\BQ$-linear map
$$
\BQ(\sqrt{p_1}) \otimes_\BQ \BQ(\sqrt{p_2}) \otimes_\BQ \dots 
\otimes_\BQ \BQ(\sqrt{p_r}) \to K, \qquad x_1 \otimes \dots \otimes x_r \mapsto
\prod_{i=1}^r x_i
$$
is an isomorphism of $\BQ$-vector spaces.
\end{itemize} 
\end{lemma}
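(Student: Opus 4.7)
The plan is to establish parts (a), (b), (c) simultaneously by induction on $r$, with the base case $r=1$ being immediate from the irrationality of $\sqrt{p_1}$ and the fact that $\BQ(\sqrt{p_1})/\BQ$ is a degree-$2$ Galois extension. I would assume all three parts hold for $K_{r-1}=\BQ(\sqrt{p_1},\dots,\sqrt{p_{r-1}})$ and deduce them for $K=K_r$.

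The pivotal step is to show $\sqrt{p_r}\notin K_{r-1}$, from which $[K:\BQ]=2\cdot[K_{r-1}:\BQ]=2^r$ follows. Suppose for contradiction that $\sqrt{p_r}\in K_{r-1}$. Then $\BQ(\sqrt{p_r})$ is a quadratic subfield of $K_{r-1}$, so by the inductive version of (b) it equals $\BQ(\sqrt{p_I})$ for some nonempty $I\subset\{1,\dots,r-1\}$. Writing $\sqrt{p_r}=a+b\sqrt{p_I}$ with $a,b\in\BQ$ and squaring forces $ab=0$ (since $\sqrt{p_I}\notin\BQ$); the case $b=0$ contradicts the irrationality of $\sqrt{p_r}$, while $a=0$ yields $p_r=b^2 p_I$, so $p_rp_I$ would be a rational square, contradicting the fact that a product of distinct primes is square-free.

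Given $[K:\BQ]=2^r$, part (a) is then immediate: $K$ is the splitting field of the separable polynomial $\prod_{i=1}^r(x^2-p_i)$ over $\BQ$, hence Galois, and every $\sigma\in\mathrm{Gal}(K/\BQ)$ permutes the roots of each $x^2-p_i$, giving an injection into $(\BZ/2\BZ)^r$ that is forced to be an isomorphism by equality of cardinalities. For (b), the Galois correspondence identifies quadratic subfields with index-$2$ subgroups, which are precisely the kernels of nonzero characters $\phi:(\BZ/2\BZ)^r\to\BZ/2\BZ$; each such $\phi$ is determined by its support $I\subset\{1,\dots,r\}$, and the element $\sqrt{p_I}=\prod_{i\in I}\sqrt{p_i}$ is fixed by exactly $\ker\phi$ and is irrational (again by square-freeness), so the fixed field equals $\BQ(\sqrt{p_I})$. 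Finally, for (c), both source and target are $\BQ$-vector spaces of dimension $2^r$, and the map is surjective because its image contains every $\sqrt{p_I}$, and these span $K$; surjectivity together with equal dimension gives an isomorphism.

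The main obstacle is the inductive step $\sqrt{p_r}\notin K_{r-1}$: it is the only place where genuine number-theoretic input (square-freeness of a product of distinct primes) enters, and everything else is formal Galois theory or linear algebra. An alternative route via ramification theory --- only $p_1,\dots,p_{r-1}$ and possibly $2$ can ramify in $K_{r-1}$, so an odd prime $p_r$ outside this list cannot have $\sqrt{p_r}\in K_{r-1}$ --- also works but requires more machinery and an awkward separate argument when $p_r=2$, so I would prefer the elementary approach above.
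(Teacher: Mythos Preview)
Your proposal is correct and follows essentially the same inductive strategy as the paper's proof: both reduce the inductive step to showing $\sqrt{p_r}\notin K_{r-1}$ by invoking the inductive version of (b) to write $\BQ(\sqrt{p_r})=\BQ(\sqrt{p_I})$, then squaring $\sqrt{p_r}=a+b\sqrt{p_I}$ to force $ab=0$ and derive a contradiction from square-freeness. The only cosmetic differences are that the paper phrases the key claim as $K_{r-1}\cap\BQ(\sqrt{p_r})=\BQ$ and then cites a composite-field theorem from Lang to obtain $G(K/\BQ)\cong(\BZ/2\BZ)^r$, whereas you argue directly via the splitting field of $\prod_i(x^2-p_i)$ and a cardinality count; both routes are standard and equally short.
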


\begin{proof}
We will prove this by induction on $r$. When $r=1$ (a) is obvious and (b)
follows from the fundamental theorem of Galois theory~\cite[VI,Thm.1.2]{Lang}.

Assume that the lemma is true for $r-1$, and let 
$K_1=\BQ(\sqrt{p_1},\dots,\sqrt{p_{r-1}})$ and $K_2=\BQ(\sqrt{p_r})$.
Then, we claim that $K_1 \cap K_2 =\BQ$. Indeed, otherwise we have $K_2 \subset
K_1$ and by part (b) it follows that $\BQ(\sqrt{p_r})=\BQ(\sqrt{p_I})$
for some nonempty subset $I \subset \{1,\dots,r-1\}$. So, 
$\sqrt{p_r}=a + b \sqrt{p_I}$ for $a,b \in \BQ$. Squaring, we get 
$$
p_r = a^2 + b^2 p_I, \qquad a b = 0.
$$
If $a=0$ then $p_r=b^2 p_I$ and since $I$ is nonempty, it follows that
$p_i^2$ divides $p_r$ where $p_i, p_r$ are distinct primes, a contradiction.
If $b=0$ then $p_r=a^2$ and $p_r$ is a prime number, also a contradiction.
This shows that $K_1 \cap K_2 =\BQ$. Let 
$K=K_1 K_2 =\BQ(\sqrt{p_1},\dots,\sqrt{p_{r}})$ denote the composite field.
It follows by \cite[VI,Thm.1.14]{Lang} that $K$ is a Galois extension
with Galois group $G(K/\BQ)=G(K_1/\BQ) \times G(K_2/\BQ) = (\BZ/2\BZ)^r$.
This proves part (a) of the inductive part. Part (b) follows from part (a)
by the fundamental theorem of Galois theory \cite[VI,Thm.1.2]{Lang} and the
classification of all index 2 subgroups of $(\BZ/2\BZ)^r$.
Part (c) follows from part (a) and the induction hypothesis.
\end{proof}

Part (c) of Lemma \ref{lem.Galois} implies the following corollary.

\begin{corollary}
\label{cor.Galois}
Every element in $\BQ(\sqrt{\BQ^+})$ has a unique representative of the form 
\begin{equation}
\label{eq.ri}
r_1\sqrt{n_1}+\dots+r_k \sqrt{n_k}
\end{equation}
where $r_i\in\BQ\setminus 0$ and $n_1<\dots <n_k$ are square-free positive 
integers.
\end{corollary}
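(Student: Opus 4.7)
The plan is to deduce the corollary from part (c) of Lemma~\ref{lem.Galois} by observing that any element of $\BQ(\sqrt{\BQ^+})$ lives in a field generated by finitely many square roots of primes, and then applying the lemma to get a canonical basis.

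First, given any $x \in \BQ(\sqrt{\BQ^+})$, I would note that $x$ is a rational polynomial expression in finitely many elements of $\sqrt{\BQ^+}$, each of which is a rational multiple of $\sqrt{n}$ for some square-free positive integer $n$. Let $p_1 < \dots < p_r$ be the union of all the prime factors of these finitely many $n$'s. Then $x$ lies in the field $K = \BQ(\sqrt{p_1},\dots,\sqrt{p_r})$.

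Next, part (c) of Lemma~\ref{lem.Galois} gives that the tensor product decomposition provides a $\BQ$-basis of $K$ consisting of the products $\prod_{i \in I} \sqrt{p_i}$ as $I$ ranges over subsets of $\{1,\dots,r\}$. Since each such product equals $\sqrt{n_I}$ for $n_I = \prod_{i \in I} p_i$, and the $n_I$ are precisely the square-free positive integers whose prime factors lie in $\{p_1,\dots,p_r\}$ (with $n_\emptyset = 1$), expanding $x$ in this basis yields the existence of a representation of the form $r_1\sqrt{n_1} + \dots + r_k\sqrt{n_k}$ after discarding the basis elements with zero coefficient and sorting the square-free integers in increasing order.

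For uniqueness, suppose $x$ has two such representations $\sum_i r_i \sqrt{n_i} = \sum_j s_j \sqrt{m_j}$. I would let $p_1 < \dots < p_r$ now be the union of the primes dividing any $n_i$ or $m_j$, so both expressions are $\BQ$-linear combinations of the basis of $K = \BQ(\sqrt{p_1},\dots,\sqrt{p_r})$ described above. Uniqueness of expansion in a basis forces the two sets of square-free integers (with their nonzero rational coefficients) to coincide, hence after sorting the representation is unique. No real obstacle arises here: the content of the corollary is already packaged in part (c), and the only subtlety is the passage from the abstract basis $\prod_{i \in I}\sqrt{p_i}$ to the concrete basis $\sqrt{n_I}$ indexed by square-free integers, which is immediate.
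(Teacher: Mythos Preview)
Your proof is correct and follows essentially the same approach as the paper, which simply states that the corollary follows from part (c) of Lemma~\ref{lem.Galois}. You have merely spelled out the details: reducing to finitely many primes, identifying the tensor-product basis with the $\sqrt{n_I}$ for square-free $n_I$, and using linear independence for uniqueness.
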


\begin{remark}
\label{rem.Galois.effective}
For the purpose of effective exact computation, we need an explicit way of
adding, subtracting, multiplying and dividing 
 expressions of the form ~\eqref{eq.ri}. 
This is obvious except for
division where we give the following algorithm: To compute $n/d$ where $n$ 
and $d$ are two such forms and $d$ contains a non-rational term 
$r_j\sqrt{n_j}$, pick a prime $p$ dividing $n_j$. We can write $d$ as 
$d_0 + \sqrt{p} d_1$ such that $d_0$ contains no term $r_i\sqrt{n_i}$ with 
$p | n_i$. We now have
$$
\frac{n}{d} = \frac{n (d_0-\sqrt{p} d_1)}{(d_0+\sqrt{p}d_1)(d_0-\sqrt{p} d_1)} 
= \frac{n (d_0-\sqrt{p} d_1)}{d_0^2 - p d_1^2}.
$$
The new denominator is simpler because it contains no more terms 
$r_i\sqrt{n_i}$ with $p | n_i$. Thus, by repeating this process we can 
eliminate all primes in the terms of the denominator.
\end{remark}

When we say {\em using interval arithmetics}, we mean:
\begin{enumerate}
\item We convert the exact representation of each quantity in 
$\BQ(\sqrt{\BQ^+})$, respectively, $\BQ(\sqrt{-3})$ to an interval $[a,b]$, 
respectively,  a complex interval $[a,b] + [a',b'] i$. These intervals have 
interval semantics: the true value of the quantity is guaranteed to be 
contained in the interval.
\item Any operations such as + or $\log$ are carried out such that interval 
semantics is preserved, i.e., the resulting interval is again guaranteed to 
contain the true value of the computed quantity.
\item An inequality involving an interval is considered certified only if 
it is true for all values in the interval. E.g., if the interval given for 
$x$ is $[a,b]$, then $x<0$ is certified only if $b<0$.
\end{enumerate}

We can now certify the geometric proto-canonical triangulation and the 
opacities of its faces. Our input is a candidate geometric proto-canonical 
triangulation obtained by calling \texttt{SnapPy}'s \texttt{canonize} on a 
tetrahedral manifold. We first guess exact values $z_i$ from the 
approximated shapes reported by \texttt{SnapPy}. Using those guesses, we verify
\begin{enumerate}
\item the rectangular form of the edge equations exactly,
\item $\mathrm{Im}(z_i)>0$ for each tetrahedron (using interval arithmetics),
\item $|e|<10^{-7}$ for each edge where $e$ is the error of the logarithmic 
form of the edge equation (using interval arithmetics),
\item all the equations~(\ref{eqn:EdgeRatioCusp}) exactly,
\item $\mathrm{Tilt}(F)<0$ (using interval arithmetics) for an opaque face, 
respectively, $\mathrm{Tilt}(F)=0$ (using exact arithmetics) for a 
transparent face.
\end{enumerate}

(1) implies that the error in (3) will be a multiple of $2\pi i$ so a 
small enough error implies that the logarithmic form of the edge equations 
is fulfilled exactly. Together with (2), this means that the tetrahedra 
yield a (not necessarily complete) hyperbolic structure. Completeness is 
ensured by (4) which checks that the cusp cross section is Euclidean. 
Checking (4) really means verifying that the recursion process to obtain 
the edge lengths could construct a consistent result. (5) certifies the 
geometric proto-canonical triangulation and the opacities of the faces.

\begin{remark}
Note that in the process, we actually produce complex intervals for the 
shapes from \texttt{SnapPy}'s approximations certified to contain the true 
values. We can do this because we know that the shapes are in the field 
$\BQ(\sqrt{-3})$ and thus can guess exact solutions and verify them exactly. 
An alternative method to obtain certified intervals from approximated shapes 
is the Krawczyk test implemented in \texttt{hikmot} \cite{hikmot}. We could 
not use it here though, because it cannot deal with non-orientable 
manifolds. The edge equations for a non-orientable manifold are polynomials 
in $z_i^*$ and ${1}/{\bar{z}_i^*}$.
\end{remark}

\begin{remark}
We could have also avoided guessing by tracking \texttt{SnapPy}'s 
algorithm to obtain a proto-canonical triangulation. We know that the 
shapes of the tetrahedral tessellation are all exactly represented by 
$\frac{1}{2}+\frac{1}{2}\sqrt{\myChanged{-}3} $ and that \texttt{SnapPy} 
is performing 2-3 and 3-2 moves during the algorithm. However, this would 
require changes to the \texttt{SnapPea} kernel since it does not report the 
sequence of moves it performed.
\end{remark}

For guessing a rational representation from an approximation, we use the 
\texttt{fractions} module shipped with \texttt{python}. It essentially 
computes the continued fraction for a given real number and evaluates it 
at a stage where the resulting denominator is less than a given bound 
(10000 in our case). For the (complex) interval arithmetics, we use 
\texttt{sage}. Our implementation in \texttt{python} is based on the script 
given in \cite{DHL}.

\subsection{Certification in the generic case}

Dunfield, Hoffman, L\myChanged{icat}a give an implementation in \cite{DHL} 
to certify a triangulation to be the canonical cell decomposition (which 
cannot contain non-tetrahedral cells). Though not needed here, we want to 
point out that their implementation can be both simplified and generalized 
to any number of cusps. 

They start with certified complex intervals for the shapes returned by 
\texttt{hikmot} \cite{hikmot}. But instead of following the complicated 
procedure in \cite[Section 3.7]{DHL}, one can simply apply interval 
arithmetics to the above equations to compute $\mathrm{Tilt}(F)$. The 
result is an interval $[a,b]$ for each $\mathrm{Tilt}(F)$ that is 
guaranteed to contain the true value of $\mathrm{Tilt}(F)$. If $b<0$ for 
each interval, then the $\mathrm{Tilt}(F)$ are certified to be less than 0, 
thus the given ideal triangulation is the canonical cell decomposition.

We provide a version of \texttt{canonical.py} here that implements this.

\section{Results of the implementation of algorithms}
\label{section4} 

We implemented the algorithms described in the previous section, see 
\cite{Goerner:tetcensus} for the resulting data. The longest algorithm 
to run was the enumeration of the \ctt s: the orientable case up to 25 
tetrahedra and the non-orientable one up to 21 tetrahedra each took about 
$\approx 6$ weeks CPU time and $\approx  70$Gb on a Xeon E5-2630, 2.3Ghz. 
The number of resulting \ctt s and \tman s are listed in 
Table~\ref{table:censusNumbers}. 

\subsection{Names of tetrahedral manifolds}

We give the \tman s names such as ``$\otet08_{0002}$'' (orientable), 
respectively, ``$\ntet02_{0000}$'' (non-orientable) with ``tet'' followed 
by the number of tetrahedra and an index. 
The different \ctt s corresponding to the same \tman{} are named with an 
additional index, e.g., ``$\otet08_{0002}\#0$'', ``$\otet08_{0002}\#1$''. We 
choose as canonical representative for an isometry class the first \ctt{}, 
e.g., $\otet08_{0002}\#0$ for the \tman{} $\otet08_{0002}$.\\
The indices are canonical: before indexing the \ctt s and \tman s, we 
first sort the \ctt s within an isometry class lexicographically by 
isomorphism signature and then sort the \tman s lexicographically by the 
isomorphism signature of their canonical representative.

\subsection{SnapPy census}

Our census of \tman s can be easily accessed from \texttt{SnapPy}. Simply 
change to the directory \texttt{snappy} accompanying this article and type 
\texttt{from tetrahedralCuspedCensus import *}. The two censuses 
\texttt{TetrahedralOrientableCusped\-Census} and 
\texttt{TetrahedralNonorientableCuspedCensus} have the same methods as any 
other census such as \texttt{OrientableCuspedCensus}. Here are examples of 
how to use them:

\begin{verbatim}
>>> from tetrahedralCuspedCensus import *
>>> M=TetrahedralOrientableCuspedCensus['otet02_0000'] # also m003
>>> TetrahedralOrientableCuspedCensus.identify(Manifold('m004'))
otet02_0001(0,0)
>>> len(TetrahedralOrientableCuspedCensus(tets=5)) # Number with 5 tets
2
>>> for M in TetrahedralOrientableCuspedCensus(tets=5):
...     print OrientableCuspedCensus.identify(M)
m410(0,0)
m412(0,0)(0,0)
>>> TetrahedralOrientableCuspedCensus.identify(Manifold("m208"))
>>>
\end{verbatim}

\myChanged{The last example shows that \texttt{m208} is not a tetrahedral 
manifold since it has only 5 tetrahedra and thus would be in the 
tetrahedral census. Note that \texttt{SnapPy}'s \texttt{is\_isometric\_to} 
is using numerical methods and can fail to find an isomorphism. To verify 
that \texttt{m208} is not tetrahedral, one can certify its isometry 
signature\footnote{\myChanged{
We plan a future publication describing how to generalize the techniques 
for certifying isometry signatures to all cusped hyperbolic manifold. 
The third named author has already incorporated this into SnapPy, beginning 
with version 2.3.2, see SnapPy documentation.}} and check that it is not 
in the data files \cite{Goerner:tetcensus} provided with this paper.}

\subsection{Regina files}

We also provide the census of \ctt s as two \texttt{Regina} files (for 
orientable and non-orientable) in the \texttt{Regina} directory 
accompanying this article. Each file groups the \ctt s first by number of 
tetrahedra and then by isometry class. The container for each isometry class 
contains the different \ctt s as well as the canonical retriangulation.

The \texttt{Regina} files can be inspected using the \texttt{Regina} GUI or 
the \texttt{Regina} python API. An example of how to traverse the tree 
structure in the file is given in \texttt{regina/example.py}.

\subsection{Morphisms} 
\label{section:morphisms}

Similarly to combinatorial isomorphism, we can define a 
{\em combinatorial homomorphism} between \ctt s, but without the 
requirement that different tetrahedra in the source go to the different 
tetrahedra in the destination. It assigns to each tetrahedron in the 
source a tetrahedron in the destination and a permutation in $S_4$ 
indicating which vertex of the source tetrahedron is mapped to which 
vertex of the destination tetrahedron. These permutations have to be 
compatible with the gluings of the source and destination tetrahedra. 
If the tessellations are connected and have no open faces, the source 
triangulation needs to have the same number of or a multiple of the number 
of tetrahedra as the destination. Topologically, a combinatorial 
homomorphism is a covering map that preserves the triangulation. We have 
implemented a procedure to list all combinatorial homomorphisms for a pair 
of triangulations in python.

We give a list of all pairs $(M, N)$ of \ctt s such that there is a 
combinatorial homomorphism from $M$ to $N$ as a text file 
\texttt{data/morphisms.txt}. We do not include the trivial pairs $(M,M)$ 
or pairs $(M,N)$ which factor through another \ctt{} as those can be 
recovered trivially through the reflexive and transitive closure. We also 
give some of the resulting graphs in \texttt{misc/graphs}. We discuss an 
example in more detail later in Section~\ref{section:cttCat}.

\section{Properties of \tman s}
\label{sec.properties}

\myChanged{
\subsection{Tetrahedral manifolds are arithmetic}
\label{sub.arithmetic}

Recall that two manifolds (or orbifolds) are commensurable if they have 
a common finite cover. Commensurability is an equivalence relation. 
The commensurability class of the figure-eight knot complement {\tt m004}
consists exactly of the cusped
hyperbolic orbifolds and manifolds with invariant trace field $\BQ(\sqrt{-3})$ that 
are arithmetic or, equivalently, that have integral traces
\cite[Theorem 8.2.3 and 8.3.2]{MR}. Thus, tetrahedral manifolds are
also arithmetic with the same invariant trace field since
\begin{lemma}
Tetrahedral manifolds are commensurable to {\tt m004}.
\end{lemma}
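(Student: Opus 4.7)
The plan is to exhibit a single cusped hyperbolic orbifold $\mathcal{O}$ that is finitely covered by $\mathtt{m004}$ and by every tetrahedral manifold $M$; commensurability then follows formally by intersecting fundamental groups (and passing to a torsion-free finite-index subgroup via Selberg's lemma if one wants an honest common \emph{manifold} cover).

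First, I would fix once and for all a regular ideal tetrahedron $T_0\subset\BH^3$ with vertices placed in $\BQ(\sqrt{-3})\cup\{\infty\}\subset\pt\BH^3$, and consider the tessellation $\mathcal{T}_0$ of $\BH^3$ generated by reflecting $T_0$ across its faces. Let $\Ga\subset\mathrm{Isom}^+(\BH^3)$ denote the subgroup of orientation-preserving isometries preserving $\mathcal{T}_0$. I would then verify: (i) $\Ga$ is discrete, because the stabilizer of $T_0$ in $\Ga$ is the finite group $A_4$ of orientation-preserving symmetries of a regular tetrahedron; (ii) $\Ga$ has finite covolume, with $\BH^3/\Ga$ being the orbifold $T_0/A_4$ of volume $v_0/12$, where $v_0$ is the volume of the regular ideal tetrahedron; and (iii) $\Ga$ is commensurable with (in fact, up to finite index equal to) the Bianchi group $\PSL(2,\calO_3)$, a classical fact since the vertices of $\mathcal{T}_0$ may be chosen in $\BQ(\sqrt{-3})$.

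Next, given any \tman{} $M=\BH^3/\Ga_M$, the combinatorial tetrahedral tessellation of $M$ lifts to a $\Ga_M$-invariant tessellation of $\BH^3$ by regular ideal tetrahedra. By the rigidity of the regular ideal tetrahedron (any two are isometric), after conjugating $\Ga_M$ inside $\mathrm{Isom}^+(\BH^3)$ we may assume this lifted tessellation coincides with $\mathcal{T}_0$. Then $\Ga_M\subset\Ga$, with finite index
\[
[\Ga:\Ga_M]\;=\;\frac{\Vol(\BH^3/\Ga_M)}{\Vol(\BH^3/\Ga)}\;=\;12\,n,
\]
where $n$ is the number of tetrahedra in $M$. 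Applying this observation to $M=\mathtt{m004}$ (which is tetrahedral with $n=2$), we get $\Ga_{\mathtt{m004}}\subset\Ga$ of index $24$.

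Finally, for an arbitrary tetrahedral manifold $M$, the subgroups $\Ga_M$ and $\Ga_{\mathtt{m004}}$ are both of finite index in $\Ga$, hence their intersection $\Ga_M\cap\Ga_{\mathtt{m004}}$ is of finite index in each. The quotient $\BH^3/(\Ga_M\cap\Ga_{\mathtt{m004}})$ is therefore a common finite cover of $M$ and $\mathtt{m004}$, so they are commensurable. The main technical point, and the only thing that really requires justification, is step (ii)--(iii) above: that the full combinatorial symmetry group of the regular tetrahedral tessellation of $\BH^3$ is a lattice commensurable with $\PSL(2,\calO_3)$. This is classical and follows from placing the vertices of $T_0$ at $0,1,\omega,\infty$ with $\omega$ a primitive sixth root of unity, after which face-reflections are realized by explicit matrices in $\PGL(2,\calO_3)$.
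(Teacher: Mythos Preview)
Your approach is essentially the paper's: both argue that every tetrahedral manifold covers a single orbifold obtained as $\BH^3$ modulo the symmetry group of the tessellation $\{3,3,6\}$ (since a \ctt{} lifts to $\{3,3,6\}$, forcing $\pi_1(M)$ into that symmetry group), and $\mathtt{m004}$ is itself tetrahedral, so commensurability follows.

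Two small remarks. First, by taking $\Gamma\subset\mathrm{Isom}^+(\BH^3)$ you only capture orientable tetrahedral manifolds; a non-orientable $M$ has holonomy containing orientation-reversing isometries, so $\Gamma_M\not\subset\Gamma$ as you have defined it. The paper uses the full group $\mathrm{Isom}(\{3,3,6\})$, which handles both cases at once (the fix on your side is trivial). Second, your step~(iii), identifying $\Gamma$ up to commensurability with $\PSL(2,\calO_3)$, is not actually needed for this lemma: once $\Gamma$ is a lattice and both $\Gamma_M$ and $\Gamma_{\mathtt{m004}}$ sit inside it with finite index, commensurability of $M$ and $\mathtt{m004}$ is immediate. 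The Bianchi-group identification is relevant to the separate conclusion about arithmeticity, not to commensurability with $\mathtt{m004}$ per se.
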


More precisely, the commensurability class of {\tt m004} also contains the orbifold 
$\mathfrak{R}=\BH^3/\mathrm{Isom}(\{3,3,6\})$ where the Coxeter group 
$\mathrm{Isom}(\{3,3,6\})$ is the symmetry group of the regular 
tessellation $\{3,3,6\}$ by regular ideal tetrahedra. This orbifold can be used
to characterize the tetrahedral manifolds in this commensurability class:

\begin{lemma}
A manifold $M$ is a covering space of $\mathfrak{R}$ if and only if it is 
tetrahedral.
\end{lemma}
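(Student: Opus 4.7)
The plan is to exploit the fact that the regular tessellation $\{3,3,6\}$ is characterized, up to ambient isometry of $\BH^3$, by the purely local property that exactly six regular ideal tetrahedra meet at each edge (this count is forced because the dihedral angle of a regular ideal tetrahedron is $\pi/3$). Both directions of the equivalence then reduce to lifting or descending tessellations along covering maps.

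For the forward direction, suppose $p \colon M \to \mathfrak{R}$ is an orbifold covering with $M$ a manifold. The orbifold $\mathfrak{R}=\BH^3/\mathrm{Isom}(\{3,3,6\})$ carries the quotient of the regular tetrahedral tessellation of $\BH^3$, and since $p$ is a local isometry and $M$ has no orbifold singularities, this decomposition pulls back to an honest decomposition of $M$ into regular ideal tetrahedra. Hence $M$ is tetrahedral.

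For the backward direction, given a tetrahedral manifold $M$ with \ctt{} $\tau$, I would lift $\tau$ to a tessellation $\tilde\tau$ of the universal cover $\BH^3$ by regular ideal tetrahedra. Every edge of $\tilde\tau$ is surrounded by exactly six tetrahedra, because the order-$6$ condition on edges in $\tau$ pulls back under the local isometry $\BH^3 \to M$. Fixing an isometric identification of one tetrahedron of $\tilde\tau$ with one tetrahedron of $\{3,3,6\}$, I would extend face-by-face to a global isometry $\tilde\tau \to \{3,3,6\}$: each neighboring regular ideal tetrahedron is uniquely determined by the shared face, and consistency around edges follows from the rigidity of six regular ideal tetrahedra sharing a common edge. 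The deck group $\pi_1(M)$ of the covering $\BH^3 \to M$ then sits inside $\mathrm{Isom}(\{3,3,6\})$ as a torsion-free subgroup preserving $\tilde\tau$, and the quotient map $M = \BH^3/\pi_1(M) \to \BH^3/\mathrm{Isom}(\{3,3,6\})=\mathfrak{R}$ is the desired covering.

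The main technical point is the local-to-global extension in the backward direction. Since $\BH^3$ is simply connected, it suffices to verify that circling any edge of $\tilde\tau$ closes up consistently with the identification on a fixed starting tetrahedron, and this is precisely the content of the order-$6$ condition combined with the rigidity of the configuration of six regular ideal tetrahedra around a common edge (any two such configurations agreeing on one tetrahedron agree on all six).
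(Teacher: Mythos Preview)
Your proof is correct and follows essentially the same route as the paper's: lift the \ctt{} to $\{3,3,6\}$ in $\BH^3$ so that $\pi_1(M)\subset\mathrm{Isom}(\{3,3,6\})$, and conversely pull back the tetrahedral structure from $\mathfrak{R}$ using that $M$ is a genuine manifold (torsion-free $\pi_1$). You supply more detail than the paper on why the lifted tessellation must coincide with $\{3,3,6\}$, while the paper is slightly more explicit about the role of the barycentric subdivision in the forward direction, but the underlying argument is the same.
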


\begin{proof}
A \ctt{} of a manifold $M$ lifts to the tessellation $\{3,3,6\}$ in its 
universal cover $\BH^3$. Thus, $\pi_1(M)$ is a subgroup of the symmetry group 
$\mathrm{Isom}(\{3,3,6\})$. Consequently, $M$ is a cover of $\mathfrak{R}$.

Conversely, a covering map $M\to\mathfrak{R}$ induces a \ctt{} on the manifold 
$M$ with the standard fundamental domain of $\mathfrak{R}$ lifting to the 
barycentric subdivision of the \ctt.
\end{proof}

\subsection{Implications of the Margulis Theorem}
\label{sec:Margulis}

Since {\tt m004} is arithmetic, Margulis Theorem implies that its 
commensurator is not discrete and thus the commensurability class of 
{\tt m004} contains no minimal element  
\cite{neumannReid:Topology90Arithmetic,MR,Walsh}. In particular, 
$\mathfrak{R}$ is not the minimal element of the commensurability class. 
We thus expect to see the following phenomena in the commensurability 
class containing the tetrahedral manifolds:
\begin{itemize}
\item Non-tetrahedral manifolds that are still commensurable with 
{\tt m004}. For example, the following manifolds in SnapPy's 
{\tt OrientableCuspedCensus} up to 8 simplices have this property:
$${\tt m208, s118, s119, s594, s595, s596, v2873, v2874}$$
\item Tetrahedral manifolds $M$ with different covering maps 
$M\to\mathfrak{R}$ inducing non-isomorphic \ctt s of the same manifold $M$.
\item Combinatorial tetrahedral tessellations ``hiding symmetries'', 
defined as follows.
\end{itemize}

\begin{definition}
A \ctt{} $T$ hides symmetries if the corresponding \tman{} $M$ has an 
isometry that is not induced from a combinatorial automorphism of  $T$. 
In other words, if there is an isometry $M\to M$ that does not commute 
with the covering map $M\to\mathfrak{R}$ corresponding to $T$. 
\label{def:hidesSymmetries}
\end{definition}

In this section, we will illustrate these phenomena using the tetrahedral 
census.

\begin{remark}
 By definition, the canonical cell decomposition and thus the canonical 
retriangulation sees all isometries, so we can detect this by checking 
that the number of combinatorial automorphisms of the canonical 
retriangulation is higher than those of the \ctt. To enable the reader to 
do this, the Regina file containing the tetrahedral census 
\cite{Goerner:tetcensus} includes the canonical retriangulation as well. 
The combinatorial automorphisms can be found using the method 
{\tt findAllIsomorphisms} of a Regina triangulation or {\tt find\_morphisms} 
in {\tt src/morphismMethods.py}.
 \end{remark}
}

\myChanged{

\begin{remark}
The minimum volume orientable cusped hyperbolic orbifold 
$\mathfrak{M}=\BH^3/\PGL(2,\BZ[\zeta])$ and the Bianchi orbifold 
$\mathfrak{B}=\BH^3/\PSL(2,\BZ[\zeta])$ of discriminant $D=-3$ where 
$\zeta=\frac{1+\sqrt{-3}}{2}$ are related to $\mathfrak{R}$ as follows 
with each map being a 2-fold covering \cite{neumannReid:SmallVolOrbs}:
$$
\mathfrak{B}\to\mathfrak{M}\to\mathfrak{R}.
$$
Similarly to $\mathfrak{R}$ being the quotient of $\BH^3$ by the symmetry 
group of the regular tessellation $\{3,3,6\}$, $\mathfrak{M}$ corresponds 
to orientation-preserving symmetries, and $\mathfrak{B}$ corresponds to the 
symmetry group of the regular tessellation $\{3,3,6\}$ after two-coloring 
the regular ideal tetrahedra.

Thus, the manifold covering spaces of $\mathfrak{M}$ correspond to the 
orientable \ctt s, and the manifold covering spaces of $\mathfrak{B}$ 
correspond to orientable \ctt s whose tetrahedra can be two-colored. Regina 
displays the dual 1-skeleton of a triangulation in its UI under 
``Skeleton: face pairing graph'', so we can check whether a \ctt{} is a 
cover of $\mathfrak{B}$ by testing whether the graph Regina shows is 
two-colorable.  For example, 
all orientable \ctt s with \myChanged{fewer} than 5 tetrahedra are covers of 
$\mathfrak{B}$. But $\otet05_{0000}$ and $\otet06_{0000}$ are not.
\end{remark}

\begin{remark} 
Related results include: \cite{BMR95} show that all once-punctured torus 
bundles in the commensurability class of the figure eight-knot complement 
{\tt m004} are actually cyclic covers of the tetrahedral manifolds 
{\tt m003} and {\tt m004} and thus tetrahedral. The non-arithmetic 
hyperbolic once-punctured torus bundles are studied in 
\cite{goodmanHeardHodgson:CommensuratorsOfCuspedHyperbolicManifolds} 
where an algorithm is given to compute the commensurator of a cusped 
non-arithmetic hyperbolic manifold. \cite{RV} study symmetries of or 
hidden by cyclic branched coverings of 2-bridge knots.
\end{remark}

}

\myComment{
\subsubsection{\Tman s are arithmetic}
\label{sub.arithmetic}

Two manifolds (or orbifolds) are commensurable if they have a common finite 
cover. Commensurability is an equivalence relation. In this section we 
use nations from \texttt{SnapPea} census of manifolds. Recall that 
{\tt m004} is the figure-eight knot complement and its volume is 
$v_{4} = 2.02988321282\ldots$.
The next lemma follows from combining Theorems 3.3.4, 8.2.3 and 8.3.2
of \cite{MR}:
%The next lemma (which follows combining Theorems 3.3.4, 8.2.3 and 8.3.2
%of \cite{MR}) gives a rephrasing of commensurability to the $4_1$ knot.
%Part (c) is a practical way to decide arithmeticity of a cusped 
%hyperbolic manifold.

\begin{lemma}
%\label{lem.1}
\label{lem.1}
For a cusped hyperbolic manifold $M$, the following are 
equivalent:
\begin{itemize}
\renewcommand{\theenumi}{\alph{enumi}}
\item[(a)]
$M$ is commensurable to {\tt m004}. 
\item[(b)]
$M$ is arithmetic with invariant trace field $\BQ(\sqrt{-3})$.
\item[(c)]
the invariant trace field of $M$ is $\BQ(\sqrt{-3})$ and $M$ has
integral traces.
\end{itemize}
\end{lemma}

%\begin{proof}
%(a) is equivalent to (b) by \cite[Thm.8.2.3]{MR}. (b) implies (c)
%since the trace field is a commensurability invariant \cite[Thm.3.3.4]{MR} 
%and theorem \cite[Thm.8.3.2]{MR} applies. (c) implies (a) by 
%\cite[Thm.8.3.2]{MR} \end{proof}

%The commensurability class of the $4_1$ knot also contains the 
%reflection orbifold $\mathfrak{R}=\BH^3/\mathrm{Isom}(\{3,3,6\})$ 
%which is the quotient space by the symmetry group of the regular 
%tessellation $\{3,3,6\}$ by ideal tetrahedra, see 
%Section~\ref{subsection:bianchiRelation} for more detail. The 
%tetrahedral manifolds are contained in this commensurability class 
%and can be characterized as follows:

\begin{lemma}
\label{lem.arithmetic}
All tetrahedral manifolds are arithmetic with invariant trace field
$\BQ(\sqrt{-3})$ and commensurable to each other.
\end{lemma}

We will prove this together with Lemma~\ref{lem.RMB} later.

\begin{remark}
\label{rem.nottrue}
The converse to Lemma~\ref{lem.arithmetic} does not hold.
There are 25 manifolds in the {\tt OrientableCensus} with volume
an integer multiple of $v_4$ that are not tetrahedral.
%$$
%'m208', 's118', 's119', 's594', 's595', 's596', 'v2873', 'v2874', 'v2875',
% 't06828', 't06829', 't11365', 'o9_27787', 'o9_40998', 'o9_40999', 'o9_41000',
% 'o9_41001', 'o9_41002', 'o9_41003', 'o9_41004', 'o9_41005', 'o9_41006',
% 'o9_41007', 'o9_41008', 'o9_41009']
%$$
Of those, 
$$
{\tt m208, s118, s119, s594, s595, s596, v2873, v2874}
$$
are arithmetic, and commensurable to {\tt m004}. 
\end{remark}

\begin{remark}
The converse to Lemma~\ref{lem.arithmetic} holds for
once-punctured torus bundles. Indeed, in~\cite{BMR95} it is shown that a 
hyperbolic once-punctured torus bundle is arithmetic and commensurable to 
{\tt m004} 
if and only if it is a cyclic cover of it or its sister {\tt m003}, both 
of which are
tetrahedral manifolds ($\otet02_{0001}$ and $\otet02_{0000}$, respectively). 
Hence such a torus bundle is tetrahedral.
\end{remark}

\subsubsection{Relationship to the Bianchi orbifold}
\label{subsection:bianchiRelation}

We call a triangulation two-colorable if the tetrahedra can be assigned 
two colors such that no two neighboring tetrahedra have the same color. Let 
us now define three orbifolds $\mathfrak{B}$, $\mathfrak{M}$ and 
$\mathfrak{R}$. Recall that $\{3,3,6\}$ is the regular tessellation of 
$\BH^3$ by regular ideal tetrahedra. Let $\{3,3,6\}^+$ denote the 
two-colored regular tessellation. Symmetries of $\{3,3,6\}^+$ are supposed 
to preserve the coloring.

If we use the upper half space model of $\BH^3$ which has boundary 
$\BC\cup\{\infty\}$ such that 
$\PSL(2,\BC)\cong\PGL(2,\BC)\cong\mathrm{Isom}^+(\BH^3)$, we can move 
$\{3,3,6\}$ such that one tetrahedron $T$ has vertices at 
$\{0,1,\zeta,\infty\}$ where $\zeta=\frac{1+\sqrt{-3}}{2}$. We then have 
the following relationship between groups (each inclusion has index 2):
$$
\xymatrix@1{\mathrm{Isom}^+(\{3,3,6\}^+)\, 
\ar@{}[r]|-*[@]{\subset} \ar@{=}[d]  & \mathrm{Isom}^+(\{3,3,6\})\, 
\ar@{}[r]|-*[@]{\subset}  \ar@{=}[d]  & \mathrm{Isom}(\{3,3,6\})\\
 \PSL(2,\BZ[\zeta]) \, \ar@{}[r]|-*[@]{\subset} & \PGL(2,\BZ[\zeta]). }
$$

We denote the corresponding quotients by
$$
\xymatrix@1{\mathfrak{B} \qquad \ar[r] & \qquad \mathfrak{M}\qquad 
\ar[r] & \qquad \mathfrak{R}.}
$$

Note that $\mathfrak{R}$ is a reflection orbifold, $\mathfrak{M}$ is also 
the orientable cusped minimum volume orbifold and $\mathfrak{B}$ is  the 
Bianchi orbifold for discriminant $D=-3$

\begin{remark} 
\label{remark:fundDom}
We can explicitly give fundamental domains for these orbifolds.
Consider the barycentric subdivision of $\{3,3,6\}$ which divides each 
tetrahedron into 24 simplices. Let $T'$ denote a tetrahedron adjacent to 
$T$, $F$ a face shared by $T$ and $T'$ and $E$ an adjacent edge.
\smallskip
\begin{center}
\begin{tabular}{|c|c|}
\hline
Orbifold & Fundamental Domain\\ \hline \hline
$\mathfrak{R}$ & 1 simplex \\ \hline
$\mathfrak{M}$ & 2 simplices in $T$ touching $E$ and $F$\\ \hline 
$\mathfrak{B}$ & 4 simplices in $T$ and $T'$ touching $E$ and $F$\\ \hline
\end{tabular}
\end{center}
\end{remark}

We can now characterize tetrahedral manifolds as follows:

\begin{lemma}
\label{lem.RMB}
\rm{(a)} A manifold $M$ is a covering space of $\mathfrak{R}$ if and only 
if it is tetrahedral.\\
\rm{(b)} A manifold $M$ is a covering space of $\mathfrak{M}$ if and only 
if it is orientable and tetrahedral.\\
\rm{(c)} A manifold $M$ is a covering space of $\mathfrak{B}$ if and only 
if it is orientable and has a two-colorable \ctt{}.
\end{lemma}

\begin{remark} 
\label{remark:baryOrientability}
Part (c) of Lemma \ref{lem.RMB} is similar to the fact that a triangulation 
is orientable if and only if its barycentric subdivision is two-colorable.
\end{remark}

\begin{corollary}
\label{cor.2color}
We can efficiently determine in \texttt{Regina} whether a given \ctt{} is a 
cover of the Bianchi orbifold $\mathfrak{B}$ by checking whether its dual 
1-skeleton is a 
two-colorable graph. This graph can be found for a triangulation in the 
\texttt{Regina} UI under ``Skeleton: face pairing graph''. For example, 
all orientable \ctt s with \myChanged{fewer} than 5 tetrahedra are covers of 
$\mathfrak{B}$. But $\otet05_{0000}$ and $\otet06_{0000}$ are not.
\end{corollary}

\begin{proof}[Proof of Lemma~\ref{lem.arithmetic} and \ref{lem.RMB}] 
First, we show (a) of Lemma~\ref{lem.RMB}.
The \ctt{} of $M$ lifts to the tessellation $\{3,3,6\}$ in its universal 
cover $\BH^3$.
Thus, $\pi_1(M)$ is a subgroup of the symmetry group 
$\mathrm{Isom}(\{3,3,6\})$, and thus $M$ is a cover of $\mathfrak{R}$.

Now, assume that $M$ is a cover of $\mathfrak{R}$. Pick a fundamental 
domain for $\mathfrak{R}$ as in Remark~\ref{remark:fundDom}. Let $v$ be 
the vertex of this simplex corresponding to a center of a tetrahedron in 
$\{3,3,6\}$ and $F$ be the opposite face. As $M$ is a cover of 
$\mathfrak{R}$, $F$ lifts to $M$ where it subdivides $M$ into tetrahedra. 
Note that this requires $M$ to be a manifold cover so that the link of a 
lift of $v$ is indeed a tetrahedron.

(b) follows similarly since $\mathfrak{M}$ is the quotient by those 
symmetries in $\mathrm{Isom}(\{3,3,6\})$ that are orientation-preserving.

(c) follows from the fact that a two-coloring of a \ctt{} lifts to a 
two-coloring of $\{3,3,6\}$, respectively, vice versa descends to a \ctt{} 
for a subgroup of $\mathrm{Isom}^+(\{3,3,6\})$.

As {\tt m004} is in the same commensurability class as the above three 
orbifolds, Lemma~\ref{lem.arithmetic} follows.
\end{proof}

\subsubsection{Hidden symmetries} 

Here, we say that a {\em \ctt{} realizes a hidden symmetry} if the 
corresponding \tman{} has an isometry that does not descend to a 
combinatorial isomorphism of the \ctt{}. In other words, we are applying 
the following definition with $O=\mathfrak{R}$:

\begin{definition} 
Let $M\to O$ be a covering map of hyperbolic orbifolds. A 
{\em hidden symmetry} of $M\to O$ is a symmetry of $M$ that does not 
descend to $O$, i.e., there is no symmetry of $O$ making the following 
diagram commute:
$$
\xymatrix{M \ar[d] \ar[r] & M \ar[d] \\ O\ar[r] & O.}
$$
\end{definition}

\begin{remark}
Since the \ctt{} as well as the associated canonical retriangulation 
are available in the \texttt{Regina} files shipped with this paper, the 
reader can easily check whether a \ctt{} has hidden symmetries: this is 
the case if and only if the number of combinatorial automorphisms of the 
canonical retriangulaton is larger than that of the \ctt. The combinatorial 
automorphisms can be found with \texttt{Regina}'s 
\texttt{findAllIsomorphisms} or \texttt{find\_morphisms} in 
\texttt{src/morphismMethods.py}.
\end{remark}

Given an orbifold $O$, there is a related notion of hidden symmetry 
that can be applied to just a group $\Gamma$ with $O=\BH^3/\Gamma$ and 
that is defined in terms of the commensurator and normalizer. We refer 
the reader to the existing literature 
\cite{Walsh,MR,neumannReid:Topology90Arithmetic}, especially 
\cite{goodmanHeardHodgson:CommensuratorsOfCuspedHyperbolicManifolds} 
which gives an algorithm for constructing the commensurator. For our 
case, where $O=\mathfrak{R}$ and $\Gamma=\mathrm{Isom}(\{3,3,6\})$ is 
arithmetic, we only remark that Margulis Theorem (see 
\cite[Thm.10.3.5]{MR}) implies that we expect many \ctt{} with hidden 
symmetries. For hidden symmetries of cyclic branched coverings of 
2-bridge knots we mention \cite{RV}. 
}

%%%%%%%%%%%%%%%%% beginning of commented out text %%%%%%%%%%%%%%%%%%%%
\myComment{
We expect that many \ctt s with hidden symmetries exists because 
$\mathrm{Comm}(\Gamma)$ 

We expect many \ctt s with hidden

Recall that the commensurator of a subgroup 
$\Gamma\subset\mathrm{Isom}(\BH^3)$ (see \cite{Walsh})
$$
\mathrm{Comm}(\Gamma) = \{g\in\mathrm{Isom}(\BH^3)\ |\ 
\Gamma\mbox{ and }g\Gamma g^{-1}\mbox{ are commensurable}\},
$$
is a complete  commensurability invariant of the group $\Gamma$ and 
contains the normalizer 
$$
\mathrm{N}(\Gamma)=\{g\in\mathrm{Isom}(\BH^3)\ |\ \Gamma = g\Gamma g^{-1}\}.$$

\begin{definition}
Let $\Gamma\subset\mathrm{Isom}(\BH^3)$. A {\em virtual symmetry} of 
$\Gamma$ is a coset $g\mathrm{N}(\Gamma)\in \mathrm{Virt}(\Gamma)= 
\mathrm{Comm}(\Gamma)/\mathrm{N}(\Gamma)$. A non-trivial virtual symmetry 
is called a {\em hidden symmetry}.
\end{definition}

Given an orbifold $O$, we first need to fix a group $\Gamma$ such that 
$O\cong \BH^3/\Gamma$ before 
we can write down an element $g\mathrm{N}(\Gamma)\in\mathrm{Virt}(\Gamma)$. 
However, virtual symmetry can still be defined to be intrinsic to $O$:

\begin{lemma}
Given an orbifold $O$, there is a canonical set $\mathrm{Virt}(O)$ 
associated to $O$ such that $\mathrm{Virt}(O)$ is canonically isomorphic 
to $\mathrm{Virt}(\Gamma)$ once a particular $\Gamma$ with 
$O\cong\BH^3/\Gamma$ is chosen.
\end{lemma}

\begin{proof}
Assume we have two groups $\Gamma$ and $\Gamma'$ with 
$O\cong \BH^3/\Gamma$. There is a correspondence of 
$\mathrm{Virt}(\Gamma)$ and $\mathrm{Virt}(\Gamma')$: if 
$h\in\mathrm{Isom}(\BH^3)$ conjugates $\Gamma$ to $\Gamma'$, then an 
element $g\mathrm{N}(\Gamma)\in\mathrm{Virt}(\Gamma)$ is send to an 
element $gh\mathrm{N}(\Gamma)\in\mathrm{Virt}(\Gamma')$. This element 
is well-defined since $h$ is determined up to an element in 
$\mathrm{N}(\Gamma)$. Hence, we can obtain a canonical object by 
thinking of a {\em virtual symmetry of $O$} as an equivalence class in
$$\mathrm{Virt}(O)=\frac{\{(\Gamma, g\mathrm{N}(\Gamma))\ |\ 
O\cong\BH^3/\Gamma, g\in\mathrm{Comm}(\Gamma)\}} {(\Gamma, 
g\mathrm{N}(\Gamma)) \sim (h\Gamma h^{-1}, gh\mathrm{N}(\Gamma))}.$$
\end{proof}

\begin{lemma} 
\label{lemma:hiddenSyms}
Let $O$ be a hyperbolic orbifold. Let $M\to O$ be a covering map. There is 
a well-defined map
$$
\mathrm{Isom}(M)\to \mathrm{Virt}(O)
$$
such that (non-)hidden symmetries of $M\to O$ become (non-)hidden 
symmetries of $O$.\\
Furthermore, each virtual symmetry of $O$ is realized by a finite covering 
$M\to O$, i.e., for every element in $\mathrm{Virt}(O)$, there is an 
orbifold $M$ with a covering map $M\to O$ such that there is a symmetry 
in $\mathrm{Isom}(M)$ corresponding to the element in $\mathrm{Virt}(O)$.
\end{lemma}

\begin{proof}
The manifold $M$ is a quotient by a subgroup $\Gamma'\subset \Gamma$. A 
symmetry of $M$ can be represented by an element $g$ such that 
$g\Gamma' g^-1 = \Gamma'$.
The covering map $M\to O$ determines $\Gamma'$ up to conjugation by an 
element in $N(\Gamma)$. A symmetry of $M$ is an element $n$ with 
$\Gamma$... {\bf \mg need to finish proof, wonder whether I can reference 
literature, but, to my frustration, no article explains it that clearly.}
\end{proof}

In this paper, we are only interested in the case where $O=\mathfrak{R}$ 
and fix $\Gamma=\mathrm{Isom}(\{3,3,6\})$, so $\mathrm{N}(\Gamma)=\Gamma$. 
Margulis Theorem (see \cite[Thm.10.3.5]{MR}) states that 
$\mathrm{Comm}(\Gamma)$ is dense, and thus 
$\mathrm{Comm}(\Gamma)/\mathrm{N}(\Gamma)$ infinite, if and only if $O$ 
is arithmetic. This implies the following lemma (where the partial 
ordering is given by $M'<M$ if $M'\leftarrow M$):

\begin{lemma} 
\label{lemma:infMinEl}
The poset of \ctt s has infinitely many minimal elements.
\end{lemma}

\begin{proof}
The orbifold $O=\mathfrak{R}$ is arithmetic and thus has infinitely many 
hidden symmetries. For each hidden symmetry, consider all \ctt{} $M\to O$ 
that realize it. Among those, pick one with the minimal number of 
tetrahedra. Such a \ctt{} is necessarily minimal in the above poset. As 
a \ctt{} can realize only finitely many hidden symmetries, there must be 
infinitely many such minimal \ctt s.
\end{proof}

\begin{remark}
If $O$ is non-arithmetic, $\BH^3/\mathrm{Comm}(\Gamma)$ is an orbifold and 
the minimal element in the commensurability class of $O$. An algorithm for 
computing $\mathrm{Comm}(\Gamma)$ for cusped non-arithmetic manifolds is 
given in \cite{goodmanHeardHodgson:CommensuratorsOfCuspedHyperbolicManifolds}.
\end{remark}

}
%%%%%%%%%%%%%%%%% end of commented out text %%%%%%%%%%%%%%%%%%%%

\subsection{The category of \ctt s} 
\label{section:cttCat}

\myChanged{ To study the commensurability class containing the tetrahedral 
manifolds, we think of it as a category. For this,} recall
 the notion of a combinatorial homomorphism from 
Section~\ref{section:morphisms}. On the underlying topological space, a 
combinatorial homomorphism is a covering map. We thus get two categories 
with a forgetful functor \myChanged{$\mathcal{T}\to\mathcal{M}$}:

\begin{definition} 
\label{def:categories}
The {\em category \myChanged{$\mathcal{M}$} of \myChanged{manifolds 
commensurable with \tman s}} has as objects manifolds commensurable 
\myChanged{with} {\tt m004} and as morphisms covering maps.\\
The {\em category \myChanged{$\mathcal{T}$} of \ctt s} has as objects 
\ctt s and as morphisms combinatorial homomorphisms. 
\myRemoved{changed order of $\mathcal{T}$ 
and $\mathcal{M}$}
\end{definition}

\myRemoved{Removed Remark}
\myComment{
\begin{remark}
The first category is equivalent to the category whose objects are 
covering maps $M\to \mathfrak{R}$ (with $M$ begin a manifold) and 
whose morphisms are commutative diagrams of covering maps 
$$
\xymatrix{M \ar[rd] \ar[rr] & & M' \ar[ld]\\ & \mathfrak{R}}
$$
and the forgetful functor takes $M\to \mathfrak{R}$ to $M$.
\end{remark}}

We show a small part of these categories in Figure~\ref{fig:catCtt} 
and observe:
\begin{itemize}
\item $\otet04_{0001}\#0$ has two 2-covers (indicated by the solid arrows) 
giving two different triangulations $\otet08_{0002}\#0$ and 
$\otet08_{0002}\#1$. These triangulations are not combinatorially 
isomorphic but yield isometric manifolds (indicated by the dashed line).
\item The figure-eight knot complement, $\otet02_{0001}\#0$, and its 
sister, $\otet02_{0000}\#0$, have a common cover $\otet04_{0002}\#0$. More 
general, any two \ctt s have a common cover \ctt {} \myChanged{as they} 
are in the same commensurability class.
\item $\otet02_{0001}\#0$ and $\otet02_{0000}\#0$ show that the graph is a 
poset with more than one minimal element. In fact, most \ctt s in our 
census are minimal elements and we conjecture that there are infinitely 
many such minimal elements. 
\item The figure also shows a manifold \texttt{m208}, \myChanged{which is 
non-tetrahedral}. \myChanged{However, as with any manifold in this 
commensurability class, it still has a tetrahedral covering space, here} 
$\otet08_{0010}\#0$ (the arrow has to be dashed because \texttt{m208} is not 
tetrahedral so the map is not a combinatorial homomorphism).
\end{itemize}

\begin{remark} 
\label{remark:ExampleHiddenSym}
The last example shows that the \myChanged{\ctt{}} $\otet08_{0010}\#0$ 
\myChanged{hides symmetries as in Definition~\ref{def:hidesSymmetries}}. 
To see this, notice that the covering space $\otet08_{0010} \to 
\texttt{m208}$ is 2-fold, thus regular and \texttt{m208} is the quotient 
of $\otet08_{0010}$ by the group $G=\BZ/2\BZ$ of deck transformations. If 
$G$ preserved the \ctt{} $\otet08_{0010}\#0$, the quotient \texttt{m208} would
have an induced \ctt. But \texttt{m208} is not tetrahedral, thus the 
nontrivial
element of $G$ is a symmetry of $\otet08_{0010}\#0$ which is not a 
combinatorial homomorphism.
\end{remark}

\begin{figure}
\begin{center}
\scalebox{0.39}{\includegraphics{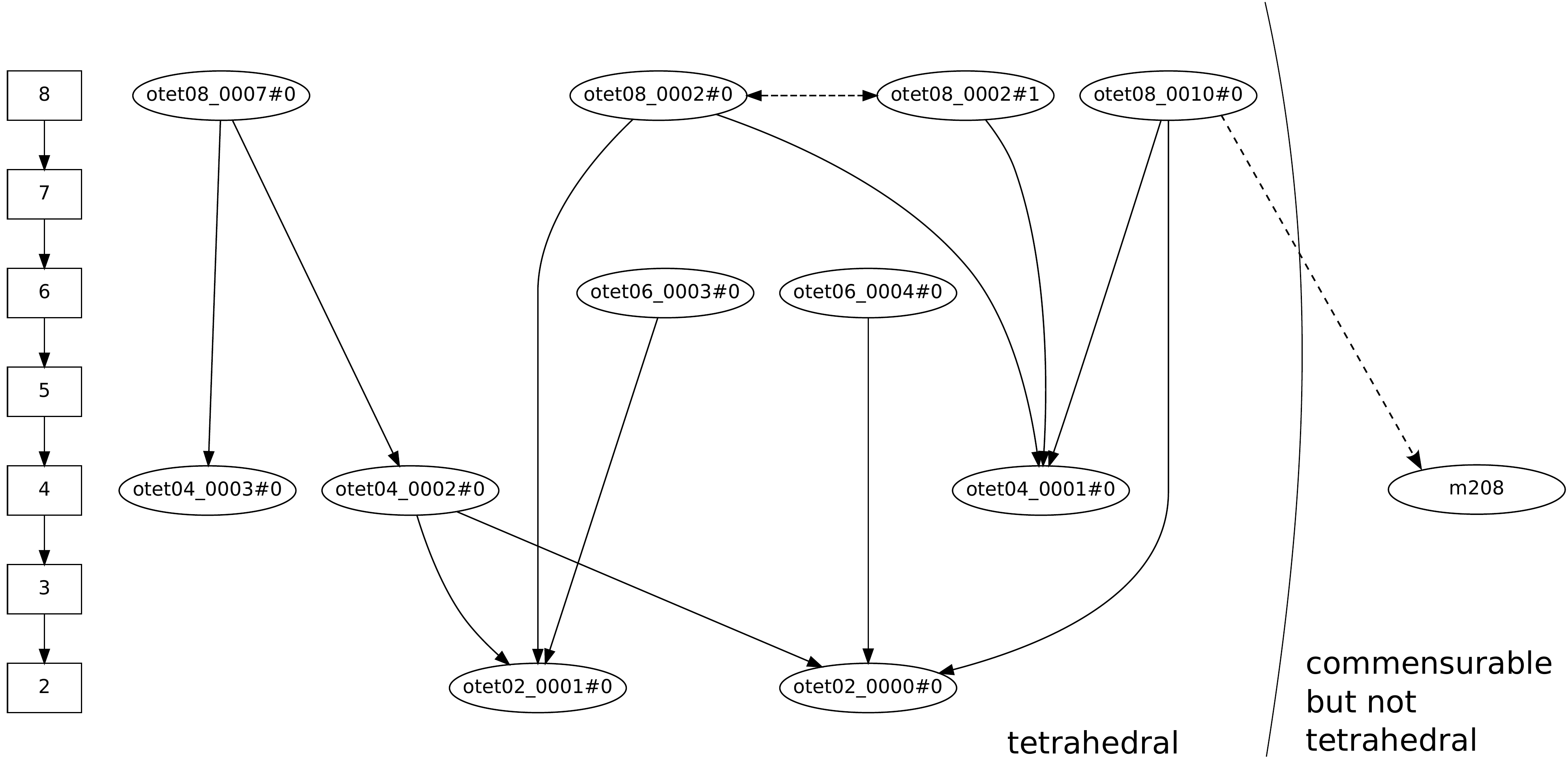}}
\end{center}
\caption{A small part of the category 
\myChanged{$\mathcal{T}$} of \ctt s (solid arrows) 
and the larger category \myChanged{$\mathcal{M}$} of \myChanged{manifolds 
commensurable with {\tt m004}} (dashed arrows). Multiple morphisms 
between two objects are collapsed to just one arrow, automorphisms and 
morphisms factoring through another object are dropped.
\label{fig:catCtt}}
\end{figure}

\myRemoved{Small improvements to Figure 2.}

\subsection{Canonical cell decompositions}

\subsubsection{Examples}

The canonical cell decomposition of a \tman{} can:
\begin{itemize}
\item Be a {\bf \ctt}.\\
{\bf Examples:} $\otet02_{0000}$ and $\otet10_{0010}$. The latter one has 
two \ctt s, $\otet10_{0010}\#0$ being the canonical cell decomposition.
\item Be a {\bf coarsening} of a \ctt{}.\\ (i.e., the \ctt{} is a 
subdivision of the canonical cell decomposition.)\\
{\bf Example:} $\otet05_{0001}$. The canonical cell decomposition consist 
of single regular ideal cube that can be subdivided into 5 tetrahedra 
(see Figure~\ref{fig:tetInCube}) such that the diagonals introduced on 
the faces are compatible. This yields the unique (up to combinatorial 
isomorphism) \ctt{} for this manifold. We elaborate on the relationships 
to cubes below.
\item {\bf Neither} of the above.\\
 In which case, the canonical cell decomposition can still
\begin{itemize}
\item Consists of (non-regular) tetrahedra.\\
{\bf Example:} $\otet08_{0010}$.
\item Contain cells which are not tetrahedra\\
{\bf Example:} $\otet08_{0001}$. Its canonical cell decomposition contains 
some hexahedra obtained by gluing two non-regular tetrahedra.
\end{itemize}
\end{itemize}

\subsubsection{Cubical manifolds}

\myChanged{
Recall from Remark~\ref{remark:burtonPair} that a manifold was called 
cubical if it can be decomposed into regular ideal cubes. 
Figure~\ref{fig:tetInCube} showed that there are two choices of picking 
alternating vertices of a cube, which span a tetrahedron and thus yield a 
subdivision of a regular ideal cube into 5 regular ideal tetrahedra. Even 
though each cube of a combinatorial cubical tessellation can be subdivided 
into regular ideal tetrahedra individually, this only yields a \ctt{} if 
the choices made are compatible with the face-pairings of the combinatorial 
cubical tessellation. We saw $\otet05_{0001}$ above as an example where this 
was possible and {\tt x103} in Remark~\ref{remark:burtonPair} as an example 
where this was impossible.

}

\myComment{
Even though each ideal cube can be divided into 5 regular ideal tetrahedra 
individually, the remark showed an example (\texttt{x103}) where this did 
not yield a \ctt{}. An example of the opposite where this yields a \ctt{} 
is the above manifold $\otet05_{0001}$.\\
}

If a manifold is both tetrahedral and cubical, the canonical cell 
decomposition can actually consist of regular cubes or regular ideal 
tetrahedra (or neither). This is illustrated by the two cubical links given 
by Aitchison and Rubinstein \cite{AR:dodecahedral}:
\begin{itemize}
\item The canonical cell decomposition of the complement $\otet10_{0011}$ 
of the alternating 4-chain link \texttt{L8a21} (see Figure~\ref{f.tetlinks}) 
consists of two regular ideal cubes. 
\item The complement $\otet10_{0006}$ of the other cubical link 
\texttt{L8a20} (see Figure~\ref{f.tetlinks}) admits two \ctt s up to 
combinatorial isomorphism, one of which is equal to the canonical cell 
decomposition.
\end{itemize}

\begin{remark}
\myChanged{Figure~\ref{fig:tetInCube} also shows that the choice of 5 
regular ideal tetrahedra to subdivide a cube hides symmetries of the cube, 
namely, the rotation by $\pi/2$ of the cube that takes one choice to the 
other. This rotation is an element in the commensurator but not in the 
normalizer of $\mathrm{Isom}(\{3,3,6\})$ and thus a hidden symmetry of 
$\mathfrak{R}$. A \ctt{} arising as subdivision of a combinatorial 
cubical tessellation can hide the symmetries of the combinatorial cubical 
tessellation corresponding to this rotation, i.e., there can be symmetries 
of the combinatorial cubical tessellation that are not symmetries of the \ctt.
}

\myComment{
Cubical manifolds can actually have symmetries visible in the cubical 
structure but not the tetrahedral one. To describe this hidden symmetry,
imagine a regular ideal cube and pick a set of alternating vertices. 
These alternating vertices span a regular ideal tetrahedron, see 
Figure~\ref{fig:tetInCube}. A rotation by $\pi/2$ takes the tetrahedron 
to the tetrahedron spanned by the complementary set of vertices. This 
rotation is a hidden symmetry of the tetrahedron, and thus $\mathfrak{R}$.\\
The \ctt{} of $\otet10_{0011}$ actually realizes this hidden symmetry. 
In other words, its two regular ideal cubes can be divided each into 5 
regular ideal tetrahedra in a way such that the diagonals introduced on 
the faces of each cube match. If we flip all the diagonals on all the 
faces, we obtain a different subdivision. Thus, when subdividing to obtain 
the \ctt, a choice was made and the manifold $\otet10_{0011}$ has a symmetry 
taking one of these choices to the other. This symmetry is hidden in the 
\ctt{}.\\
}
\myChanged{An example of this is $\otet10_{0011}$. }
Other examples are obtained by subdividing the cubical regular 
tessellation link complements $\mathcal{U}_{1+\zeta}^{\{4,3,6\}}$, 
$\mathcal{U}_2^{\{4,3,6\}}$\myChanged{,} and $\mathcal{U}_{2+\zeta}^{\{4,3,6\}}$ 
classified in \cite{Goerner}. By definition, each of these three manifolds 
can be decomposed into ideal regular cubes such that each flag of a cube, 
an adjacent face and an edge adjacent to the face can be taken to any other 
flag by a symmetry. In particular, these manifolds contain a symmetry 
flipping the diagonals of the faces of the cubes.
\end{remark}

\subsubsection{Canonical \ctt s}

\myChanged{We call a \ctt{} a regular tessellation if it corresponds} to a regular 
covering space of $\mathfrak{R}$ or $\mathfrak{M}$. This is equivalent to 
saying that the combinatorial automorphisms act transitively on flags 
consisting of a tetrahedron, an adjacent face and an adjacent edge (we drop 
the vertex in the flag to allow chiral \ctt s) \cite{Goerner}.

\begin{lemma}
Consider a \ctt{} \myChanged{$T$}. \myChanged{$T$ is equal to the 
canonical cell decomposition of the corresponding \tman{} $M$ if $T$ is a 
regular tessellation
or if $M$ has only one cusp. In particular, a \tman{} 
with only one cusp has a unique \ctt. If $T$ is equal to the canonical cell 
decomposition, then $T$ hides no symmetries.}
\myComment{
\begin{center}
\includegraphics{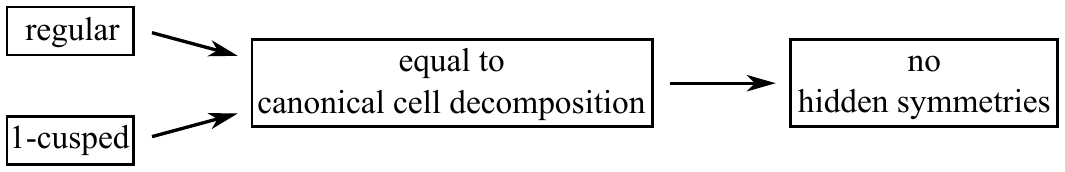}
\end{center}}
\end{lemma}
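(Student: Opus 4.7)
The plan is to show in both hypotheses of the lemma that the tilts of all faces of $T$ are equal and strictly negative. Weeks' criterion from Section~\ref{section3} then guarantees that $T$ is itself a geometric proto-canonical triangulation with every face opaque, so $T$ coincides with the canonical cell decomposition. The remaining two statements of the lemma follow from this by purely formal arguments.

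First I would compute the shape parameters of a regular ideal tetrahedron: $\zeta=\frac{1+\sqrt{-3}}{2}$, with $z=z'=z''=\zeta$, so on every edge $|z^*|=1$ and $\mathrm{Re}(z^*)/|z^*|=1/2$. Two consequences follow. First, the edge-length recursion $e_l=e_k|z^*|$ degenerates to $e_l=e_k$, so every edge of every cusp cross-section has the same length and every cross-section triangle is equilateral. Second, the vertex-tilt formula reduces to
\[
\mathrm{Tilt}(i,v) = R^i_v - \tfrac{1}{2}\sum_{u\neq v} R^i_u,
\]
which equals $-R/2$ as soon as all three $R^i_u$ inside a tetrahedron coincide.

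Next I would show that the circumradii $R^i_v$ are globally equal across all tetrahedra and cusps. In the one-cusped case this is automatic: after normalizing the sole cusp to the target area $A$, all cross-section triangles are congruent equilateral triangles of a common scale and share one value $R$. In the regular-tessellation case, transitivity of the combinatorial automorphism group on flags implies transitivity on ideal vertices of $T$, so all cusps are congruent, have equal unnormalized areas, acquire the same normalization factor, and yield congruent cross-section triangles across all cusps. Either way every face tilt equals $-R<0$, every face is opaque, and $T$ equals the canonical cell decomposition. The one-cusped uniqueness then follows because the canonical cell decomposition depends only on the isometry type of $M$ (Epstein--Penner), and any two \ctt s of a one-cusped \tman{} each equal that decomposition. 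For the hidden-symmetries statement, any isometry of $M$ carries the canonical cell decomposition to itself, so if $T$ equals that decomposition every isometry of $M$ descends to a combinatorial automorphism of $T$.

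The main obstacle I anticipate is the consistency of the edge-length propagation in the one-cusp case; but since $|z^*|=1$ for every edge, the product of ratios around any closed loop in the triangulated cusp cross-section is $1$, so propagation is trivially consistent. A smaller point worth verifying is that transitivity on flags really forces transitivity on ideal vertices, which follows because each flag contains an edge and each ideal vertex is the endpoint of some edge lying in the transitively permuted set of tetrahedra.
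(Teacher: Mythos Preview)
Your argument is correct. You and the paper share the same first observation---that under either hypothesis all cusp cross sections are congruent equilateral triangles of a single common scale---but you then diverge in how you conclude. You plug the common circumradius $R$ and the values $|z^*|=1$, $\mathrm{Re}(z^*)/|z^*|=1/2$ into Weeks' tilt formula to obtain $\mathrm{Tilt}(F)=-R<0$ on every face, so $T$ is proto-canonical with all faces opaque. The paper instead argues directly from the Epstein--Penner construction: once every tetrahedron end meets the chosen horoball in the same volume, the lifted horoballs in $\BH^3$ inherit the full $\{3,3,6\}$ symmetry, and the convex-hull construction must return the tessellation $\{3,3,6\}$ itself. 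Your route is more computational and self-contained given Section~\ref{section3}; the paper's is a one-line appeal to symmetry that avoids any formula. Interestingly, the authors' earlier draft (still visible in the commented-out block) used exactly your tilt computation for the one-cusped case before they switched to the geometric argument.

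One small point: your justification that flag-transitivity yields transitivity on ideal vertices is slightly loose as stated, since a (tet, face, edge)-flag does not single out a vertex of the edge. The cleaner way to say it is that flag-transitivity forces the stabilizer of a tetrahedron to act transitively on its $12$ oriented (face, edge) pairs, hence to contain $A_4$, which already acts transitively on the four ideal vertices; transitivity on cusps then follows from transitivity on tetrahedra.
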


\begin{proof}
\myChanged{
Recall from Section~\ref{sec:defCanonicalCell} that the canonical cell 
decomposition relies on choosing cusp neighborhoods of the same volume 
for each cusp. If $T$ is regular, then each cusp neighborhood intersects 
$T$ in the same triangulation. This is also true if $M$ has only one cusp 
and there is only one cusp neighborhood to choose. Thus, each end of a 
tetrahedron intersects the cusp neighborhoods in the same volume. $T$ lifts 
to the regular tessellation $\{3,3,6\}$ of $\BH^3$ and the cusp 
neighborhoods lift to horoballs with the same symmetry. Hence, the canonical 
cell decomposition is equal to $T$. The other statement follows from the 
canonical cell decomposition not hiding any symmetries by definition.}
\myComment{
When computing the tilts in Equation~\ref{eqn:Tilt} for a 1-cusped \ctt{}, 
there is no rescaling of cusp cross sections necessary and all tetrahedra 
are regular. Thus, all $R^i_v$ are equal and all 
$\mathrm{Im}(z_i^*)=\sqrt{3}/2$ making all tilts negative. Hence, 1-cusped 
tetrahedral manifold has a unique \ctt{} which is always equal to the 
canonical cell decomposition.

Since all symmetries of a manifold are visible in the canonical cell 
decomposition, all symmetries are combinatorial isomorphisms if the \ctt{} 
is equal to the canonical cell decomposition. Thus, such a \ctt{} realizes 
no hidden symmetries.}
\end{proof}

\myChanged{
\begin{remark}
For some cubical tessellations such as $\mathcal{U}_{1+\zeta}^{\{4,3,6\}}$, 
$\mathcal{U}_2^{\{4,3,6\}}$, and $\mathcal{U}_{2+\zeta}^{\{4,3,6\}}$, we can 
partition the cusps into two disjoint sets such that no edge connects two 
cusps of the same set. If, in the construction of the canonical cell 
decomposition, we now pick for cusps in one set cusp neighborhoods of a 
volume slightly different from those for cusps in the other set, we no 
longer obtain the cubical tessellation but one of the two subdivided \ctt s 
depending on which set of cusps we favored.
\end{remark}
}

\section{Tetrahedral links}
%\label{sec.links}
\label{sec.links}

\subsection{Some facts about tetrahedral links} 

Consider a cusped 3-manifold $M$, i.e., the interior of a compact 
3-manifold $\bar{M}$ with boundary $\partial\bar{M}$ a disjoint union of 
tori. We say that $M$ is a \emph{homology link complement} if the long 
exact sequence in homology associated to $(\bar{M},\partial \bar{M})$ is 
isomorphic to that of the complement of a link in $S^3$. Let 
$i:\partial\bar{M}\to \bar{M}$ denote the inclusion of the boundary. We 
thank C. Gord\myChanged{o}n for pointing out to us that (b) implies (d). 

%Let $i:N\to M$ be the inclusion of a cusp neighborhood $N$. We say that 
%$M$ is a \emph{homology link complement} if the induced map $i_*$ on 
%homology is equivalent to that of some link complement in $S^3$.

\begin{lemma} 
\label{lem.homology.link}
Let $M$ be a cusped 3-manifold. The following are equivalent:
\begin{enumerate}
\renewcommand{\theenumi}{\alph{enumi}}
\item $M$ is a homology link complement.
\item $H_1(M;\BZ)=\BZ^c$ where $c$ is the number of cusps.
\item The cuspidal homology 
$H^{\mathrm{cusp}}_1(M)=H_1(\bar{M};\BZ)/\mathrm{Im}(i_*)$ vanishes.
\item $M$ is the complement of a link in an integral homology sphere.
\end{enumerate}
\end{lemma}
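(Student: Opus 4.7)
My plan is to prove the four equivalences in two phases: the homological block $(a)\Leftrightarrow(b)\Leftrightarrow(c)$, and then the geometric equivalence $(b)\Leftrightarrow(d)$. The trivial directions are $(a)\Rightarrow(b)$, immediate from the definition, and $(d)\Rightarrow(b)$, which follows from Alexander duality in the homology sphere $\Sigma$ (the argument is identical to the classical one for $S^3$): the LES of $(\Sigma,\Sigma\setminus L)$ together with excision yields $H_1(\Sigma\setminus L)\cong\BZ^c$.

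For $(b)\Leftrightarrow(c)$ I will use the long exact sequence of $(\bar{M},\partial\bar{M})$, Poincar\'e-Lefschetz duality, and the universal coefficient theorem. The half-lives-half-dies principle ensures $\mathrm{rank}(i_*)=c$, so $H_1^{\mathrm{cusp}}(M)=H_1(\bar{M})/\mathrm{Im}(i_*)$ has rank $0$ and is purely torsion. From the LES this group injects into $H_1(\bar{M},\partial\bar{M})\cong H^2(\bar{M})\cong \mathrm{Hom}(H_2(\bar{M}),\BZ)\oplus\mathrm{Ext}(H_1(\bar{M}),\BZ)$. Under (b), $H_1(\bar{M})\cong\BZ^c$ is torsion-free, the $\mathrm{Ext}$ summand vanishes, and the target is torsion-free, forcing $H_1^{\mathrm{cusp}}(M)=0$. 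Conversely, under (c), $H_1(\bar{M})$ is a quotient of the free group $\BZ^{2c}$, hence torsion-free, and combined with rank $c$ this gives $H_1(\bar{M})\cong\BZ^c$. To upgrade $(b)$ to $(a)$ I will continue the LES: the same identifications force $H_2(\bar{M})\cong\BZ^{c-1}$ and make each term and each connecting map coincide with those of a link complement in $S^3$.

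The remaining implication $(b)\Rightarrow(d)$, attributed to Gordon, is the main obstacle. My plan is to Dehn-fill $M$ along carefully chosen primitive slopes $\gamma_i\in H_1(T_i)\cong\BZ^2$ whose images $i_*(\gamma_i)$ form a $\BZ$-basis of $H_1(\bar{M})=\BZ^c$; the filled manifold $\Sigma$ then satisfies $H_1(\Sigma)=H_1(\bar{M})/\langle i_*(\gamma_i)\rangle=0$ and, by Poincar\'e duality on $\Sigma$, also $H_2(\Sigma)=0$, so $\Sigma$ is an integral homology sphere and the cores of the inserted solid tori form a link $L$ with $\Sigma\setminus L=M$. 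Producing the slopes is the delicate step. The key structural input is that $\ker(i_*)\subset H_1(\partial\bar{M})=\bigoplus_i\BZ^2$ is a saturated Lagrangian with respect to the sum of the symplectic intersection forms on the boundary tori, a standard consequence of Poincar\'e-Lefschetz duality. The desired $\gamma_i$ amount to producing a complementary Lagrangian that decomposes as $\bigoplus_i\BZ\gamma_i$ with $\gamma_i$ primitive in $H_1(T_i)$. I would prove existence by induction on $c$: a rank count forces the projection of $\ker(i_*)$ onto some $H_1(T_j)$ to be non-trivial, one selects a primitive $\gamma_j\in H_1(T_j)$ symplectically paired to $\pm 1$ with an element of the kernel, and the problem reduces to a saturated Lagrangian in the remaining $c-1$ symplectic planes.
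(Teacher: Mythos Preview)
Your overall architecture matches the paper's: $(a)\Rightarrow(b)$ is immediate, $(b)\Leftrightarrow(c)$ via the long exact sequence and half-lives-half-dies (the paper simply cites \cite[Lem.~6.9]{Goerner} here, so you actually supply more detail), $(d)\Rightarrow(a)$ via Alexander duality, and the main content is $(b)\Rightarrow(d)$ by induction on $c$, filling one torus at a time so that the result still satisfies (b).

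For $(b)\Rightarrow(d)$ the paper and you diverge in an important way. The paper works on the manifold level: for \emph{any} boundary torus $T$ it shows that the image $H$ of $H_1(T)$ in $H_1(\bar M)\cong\BZ^c$ contains a rank-one direct summand, so there is a primitive slope $\gamma\in H_1(T)$ with $i_*(\gamma)$ primitive in $\BZ^c$; filling along $\gamma$ drops $c$ by one. The key point is proved by contradiction: if $H\subset p\,H_1(\bar M)$ for some prime $p$, then $H_1(T;\BZ_p)\to H_1(\bar M;\BZ_p)$ is zero, contradicting half-lives-half-dies over $\BZ_p$ for the manifold obtained by filling all the other tori. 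You instead abstract to the symplectic lattice, working only with the saturated Lagrangian $L=\ker(i_*)\subset\bigoplus_j H_1(T_j)$.

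Here is the gap. Your inductive step reads: ``a rank count forces the projection of $\ker(i_*)$ onto some $H_1(T_j)$ to be non-trivial, one selects a primitive $\gamma_j\in H_1(T_j)$ symplectically paired to $\pm 1$ with an element of the kernel.'' The rank count gives only $p_j(L)\neq 0$. To pair a primitive $\gamma_j$ to $\pm 1$ with some $k\in L$ you need $p_j(k)$ to be \emph{primitive} in $H_1(T_j)\cong\BZ^2$, i.e.\ $p_j(L)\not\subset q\BZ^2$ for every prime $q$. Non-triviality alone does not give this (e.g.\ a subgroup $2\BZ\times 0\subset\BZ^2$ is non-trivial but contains no primitive vector). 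This is precisely the place where the paper invokes mod-$p$ Poincar\'e duality, and your purely linear-algebraic sketch does not supply a substitute.

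The gap is fillable within your framework, and the fix is the algebraic shadow of the paper's argument: reduce mod a prime $q$. The image $L_q\subset(\BZ/q)^{2c}$ is a Lagrangian subspace of dimension $c$; if $p_j(L)\subset q\BZ^2$ then $L_q$ lies inside $\bigoplus_{i\neq j}(\BZ/q)^2$, where any isotropic subspace has dimension at most $c-1$, a contradiction. Thus for every $j$ and every prime $q$ one has $p_j(L)\not\subset q\BZ^2$, and the desired primitive $\gamma_j$ exists. You should also check that after the symplectic reduction the new Lagrangian in $\bigoplus_{i\neq j}H_1(T_i)$ is again \emph{saturated}; this is true but is another detail your sketch omits. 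Alternatively, it is cleaner to run the induction on the manifold as the paper does: once $i_*(\gamma_j)$ is primitive, the filled manifold has $H_1\cong\BZ^{c-1}$ and (b) persists automatically.
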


\begin{proof}
(a) implies (b) since $H_1(\partial \bar{M})\cong\BZ^{2c}$ determines $c$ 
and $H_1(M)=\BZ^c$ for a link complement in $S^3$. The equivalence of (b) 
and (c) was shown in \cite[Lem.6.9]{Goerner}. To prove that (b) implies (d), 
we work
by induction on $c$. For $c=0$, $M$ is a homology sphere and thus the 
complement of the empty link.
Assuming it is true for $c-1$, pick a component $T$ of $\partial \bar{M}$ 
and let $H$ be
the image of $H_1(T;\BZ)$ in $H_1(\bar{M};\BZ)$ under the map induced by 
inclusion. By Poincare duality, 
$H$ has rank $1$ or $2$ (apply 
\cite[Chapter VI, Theorem 10. 4]{bredon:top_and_geo} to $\bar{M}$ with 
all boundary components but $T$ Dehn-filled). Now we claim that $H$ 
contains a rank $1$ direct summand of $H_1(\bar{M};\BZ)$ (so one can now do a 
Dehn filling on $T$ to reduce $c$ by 1). For if not, then $H$ is contained in 
$p H_1(\bar{M};\BZ)$ for some prime $p$. Then
$H_1(T;\BZ_p)$ maps trivially in $H_1(\bar{M};\BZ_p)$, contradicting duality.

It is left to show that (d) implies (a). This follows easily from 
Alexander duality \cite{BZ}.
\end{proof}

A homology link $M$ is the complement of a link in the 3-sphere if 
and only if there is a Dehn-filling of it with trivial fundamental group. 
In that 
case, the filling is a homotopy 3-sphere, hence a standard 3-sphere (by 
Perelman's Theorem), and the 
link is the complement of the core of the filling. \texttt{SnapPy} can 
compute the homology of a hyperbolic manifold as well as a presentation of 
its fundamental group, before or after filling. Note that links are in 
general not determined by their complement, i.e., there are 3-manifolds 
that arise as the complement of infinitely many different links~\cite{Gordon}. 
On the other hand, 
the only tetrahedral knot is the figure-eight knot. This follows from the 
fact that 
tetrahedral manifolds are arithmetic, and the only arithmetic knot is the 
figure-eight  
knot~\myChanged{\cite[Theorem 2]{reid:arithmeticity}}. \myRemoved{Changed 
reference}

\subsection{A list of tetrahedral links}

%% see: tetrahedral.3manifolds.snappy.txt
%% see: census.tetrahedral.manifolds/KirbyCalculatorFilesNew
Of the 124 orientable tetrahedral manifolds with at most 12 tetrahedra, 
27 are homology links and \texttt{SnapPy} identified 13 of them with link 
exteriors in its census. Of the remaining 14 homology links, 
\begin{itemize}
\item 
$\otet04_{0000}$ is the Berge \myChanged{manifold, the complement of a link 
in \cite{MP:magicmfdfilling}}\myRemoved{Changed reference}, 
\item
11 are link complements, with corresponding links shown in 
Figure~\ref{f.tetlinks} and~\ref{f.tetlinks12}.\\ 
(These links were found by drilling some curves until the manifold could
be identified as a complement of a link in SnapPy's \texttt{HTLinkExteriors}. 
We then found a framing of some components of the link
such that Dehn-filling gives back the \tman. This gives us a Kirby diagram 
of the \tman. Using the Kirby Calculator \cite{kirbycalculator}, we 
successfully removed all Dehn-surgeries and obtained a link.)
\item $\otet08_{0003}$ and $\otet10_{0023}$ 
(with 2 and 1 cusps respectively) are not link complements.\\
(This can be shown
using {\tt fef\_gen.py} \myChanged{based on} \cite{MPR:fivechainfilling} 
and available from \cite{fefgen}\myRemoved{Changed reference} to list all 
exceptional slopes.\myRemoved{No footnote as this bug has been resolved in 
version 1.3 of {\texttt fef\_gen}}
and then compute homologies for those.)
\end{itemize}

The data in Table~\ref{table:censusNumbers} also suggest: 

\begin{conjecture}
Every tetrahedral link complement has an even number of tetrahedra (i.e., 
a corresponding \ctt{} has an even number of tetrahedra).
\end{conjecture}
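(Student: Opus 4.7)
My plan is to show that the underlying \ctt{} $T$ of any tetrahedral link complement $M$ is two-colorable: its tetrahedra can be labeled by $\BZ/2\BZ$ so that any two sharing a face receive different labels. A two-colorable \ctt{} automatically has an even number of tetrahedra, since counting face--tetrahedron incidences forces the two color classes to have equal size.

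I will detect two-colorability cohomologically. Let $\chi$ be the $1$-cochain on the dual $1$-skeleton of $T$ that assigns $1$ to every face of $T$. Because every edge of $T$ has order $6$, the coboundary of $\chi$ vanishes mod~$2$ on each dual $2$-cell, so $\chi$ represents a class $[\chi]\in H^1(M;\BZ/2\BZ)$; a two-coloring exists precisely when $[\chi]=0$. Since $M$ is a homology link complement, Lemma~\ref{lem.homology.link} gives $H_1(M;\BZ/2\BZ)\cong(\BZ/2\BZ)^c$, generated by meridian classes, so it suffices to verify $\la [\chi],\mu\ra=0$ for each meridian $\mu$.

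Each meridian is homotopic to a simple closed loop on its cusp torus, and the horoball cross-section of a tetrahedral manifold tiles each cusp torus by equilateral triangles meeting six to a vertex, exhibiting it as a quotient of the standard triangular lattice in $\BR^2$ by translations. That lattice carries the canonical ``up/down'' two-coloring of its triangles, and every translation preserves this coloring, so the two-coloring descends to the cusp torus. Any closed loop in general position on the cusp torus switches colors each time it crosses an edge of the triangulation, and so must cross an even number of edges to close up. Each such edge crossing corresponds to a face of $T$ pierced by the meridian representative, so $\la[\chi],\mu\ra=0$.

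The main technical obstacle will be the careful bookkeeping that identifies the pairing of $[\chi]$ with a meridian with the count of edges the meridian representative crosses in the cusp cross-section, together with checking that a meridian can indeed be homotoped into the cusp torus without disturbing this parity. Once this is done, the argument actually proves more: any tetrahedral manifold whose mod-$2$ first homology is generated by cusp loops is two-colorable, hence has an even number of tetrahedra, and by the discussion in Section~\ref{sec:Margulis} is therefore a covering space of the Bianchi orbifold $\mathfrak{B}$.
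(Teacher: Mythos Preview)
The paper does not prove this statement; it is recorded there as a \emph{conjecture}, suggested only by the census data in Table~\ref{table:censusNumbers} (the ``homology links'' column vanishes for every odd row). So there is no proof in the paper to compare your argument against.

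Your argument, however, appears to be correct and actually settles the conjecture. The steps are all sound:
\begin{itemize}
\item The constant $\BZ/2\BZ$-valued cochain $\chi$ on the dual spine is a cocycle because every edge of a \ctt{} has order~$6$, and $[\chi]=0$ exactly when the face-pairing graph is bipartite, i.e.\ when the \ctt{} is two-colorable. Bipartiteness of a $4$-regular graph forces the two colour classes to have equal size, so two-colorability implies an even number of tetrahedra.
\item For a link complement in $S^3$ one has $H_1(M;\BZ)\cong\BZ^c$ generated by meridians, so by the universal coefficient theorem it suffices to pair $[\chi]$ against meridians.
\item The key geometric point---that each cusp cross-section is two-colorable---holds because every orientable \ctt{} has cusp cross-sections that are flat tori tiled by equilateral triangles; their universal covers are the standard $\{3,6\}$ tessellation of $\BR^2$, the deck group consists of lattice translations (the only orientation-preserving, fixed-point-free isometries of $\BR^2$ preserving the tessellation), and lattice translations preserve the up/down colouring of triangles. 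In particular a deck translation can never identify two adjacent triangles, so the descended colouring is consistent.
\end{itemize}
The ``technical obstacle'' you flag is minor: a meridian is by definition a curve on the peripheral torus, and the edges it crosses there are exactly the intersections of the $2$-faces of $T$ with that torus, so the evaluation of $[\chi]$ on a meridian is literally the mod-$2$ edge count. Your closing remark is also correct: the same proof shows that any orientable \ctt{} whose mod-$2$ cuspidal homology vanishes (in particular any tetrahedral homology link complement in the sense of Lemma~\ref{lem.homology.link}) is two-colorable and hence, by the discussion in Section~\ref{sec:Margulis}, covers the Bianchi orbifold~$\mathfrak{B}$.
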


\subsection{A remarkable tetrahedral link}
\label{sub.remarkable}

Of the 11580 orientable \tman s with at most 25 tetrahedra, 885
are homology links, and have at most 7 cusps. There is a unique \tman \
with 7 cusps, $\otet20_{0570}$, which is a link complement, and a 
2-fold cover of the minimally twisted 5-chain link $L10n113=\otet10_{0027}$.
This remarkable link is shown in Figure~\ref{fig.remarkable}.

\begin{figure}[!hptb] 
\centering{
\includegraphics[height=0.12\textheight]{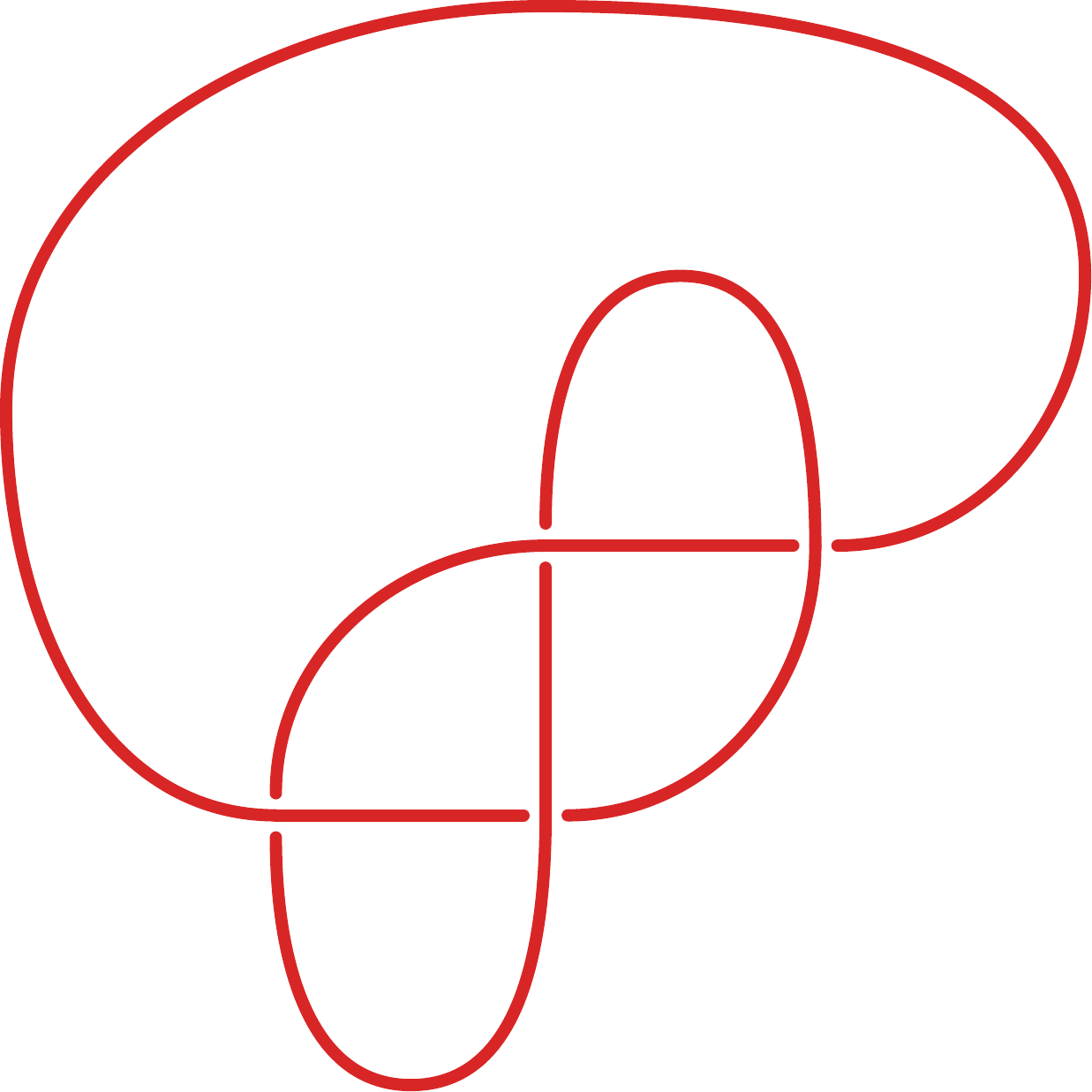}\hspace{5mm} 
\includegraphics[height=0.12\textheight]{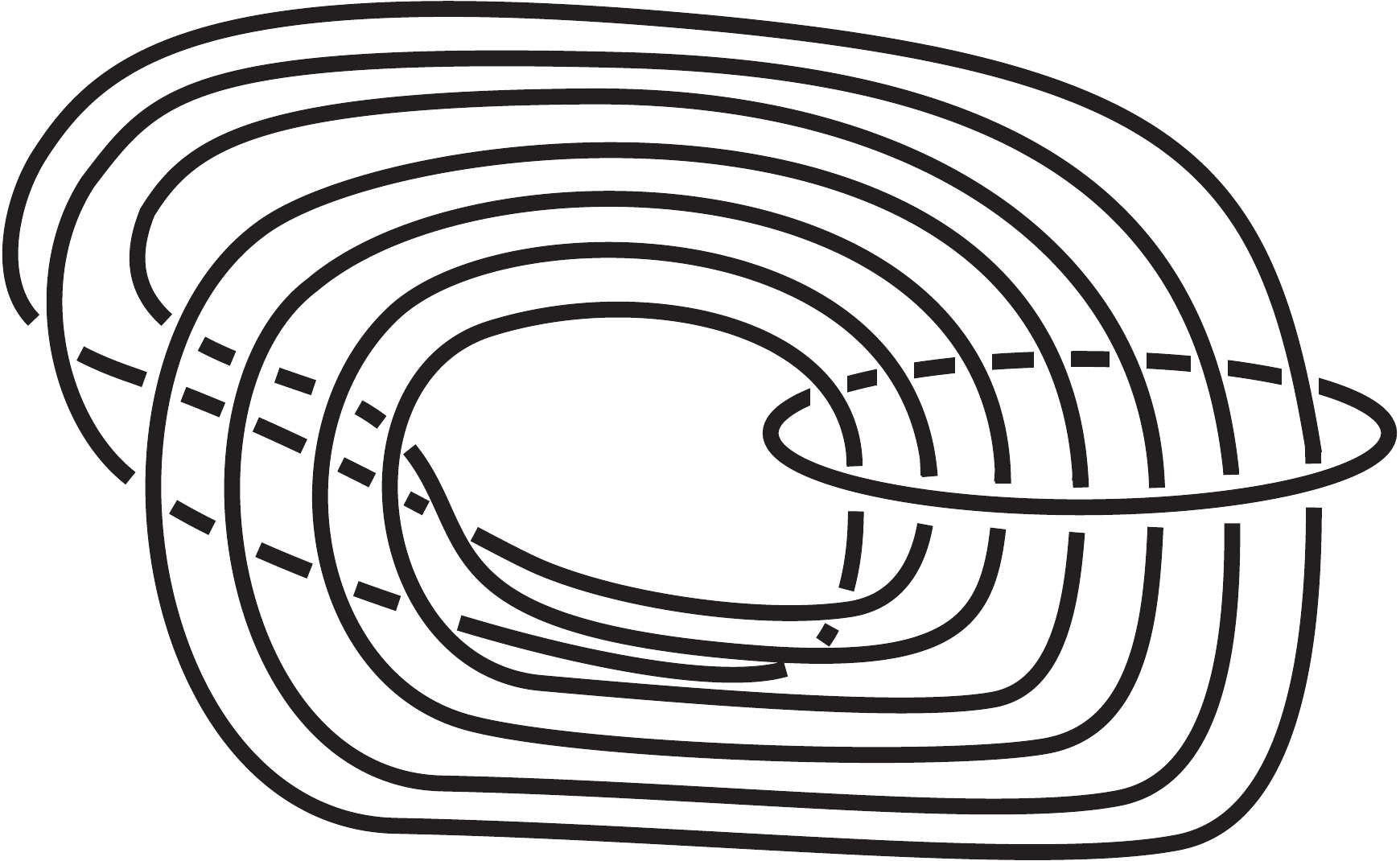}\hspace{5mm}
\includegraphics[height=0.12\textheight]{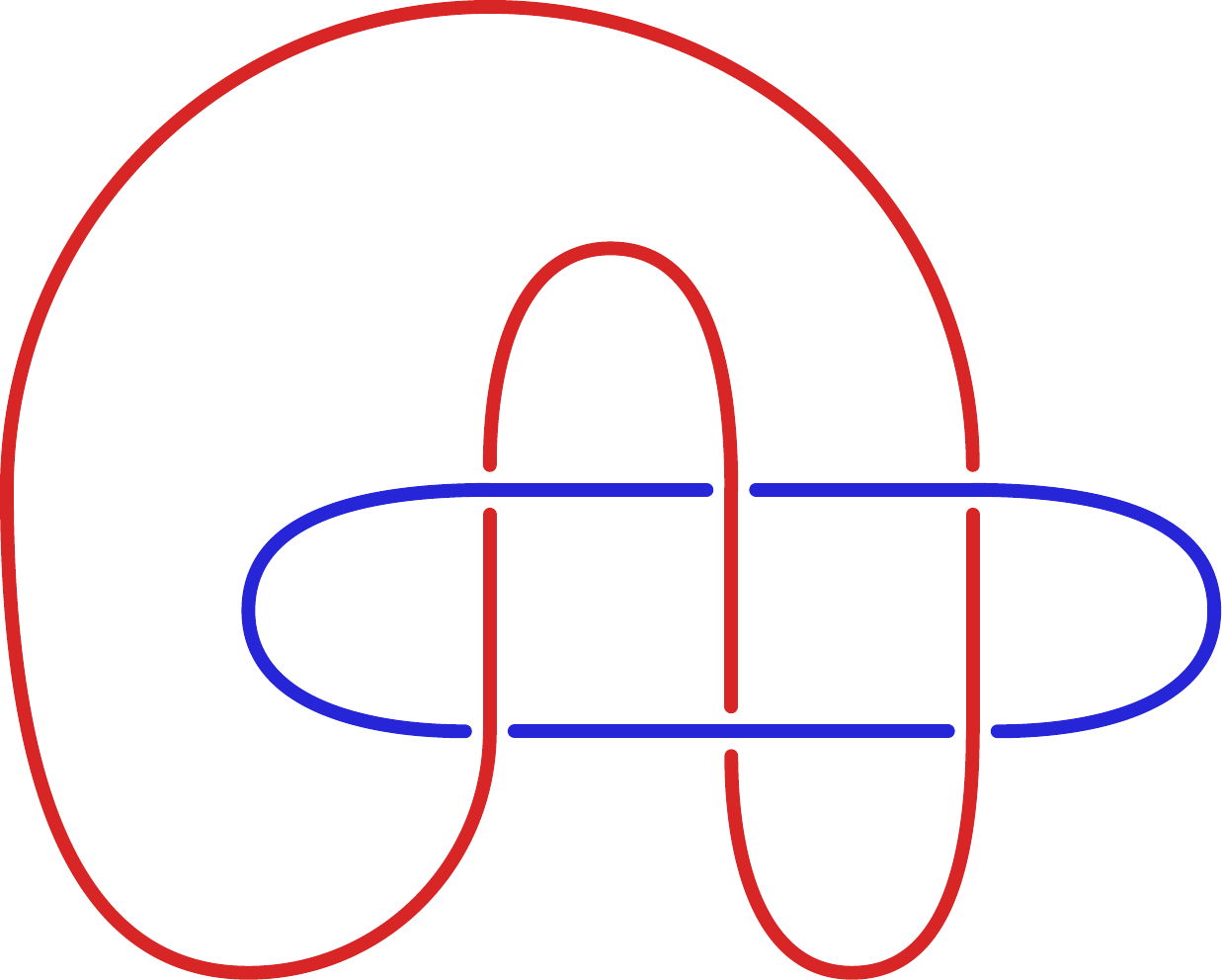}\hspace{5mm}
\includegraphics[height=0.12\textheight]{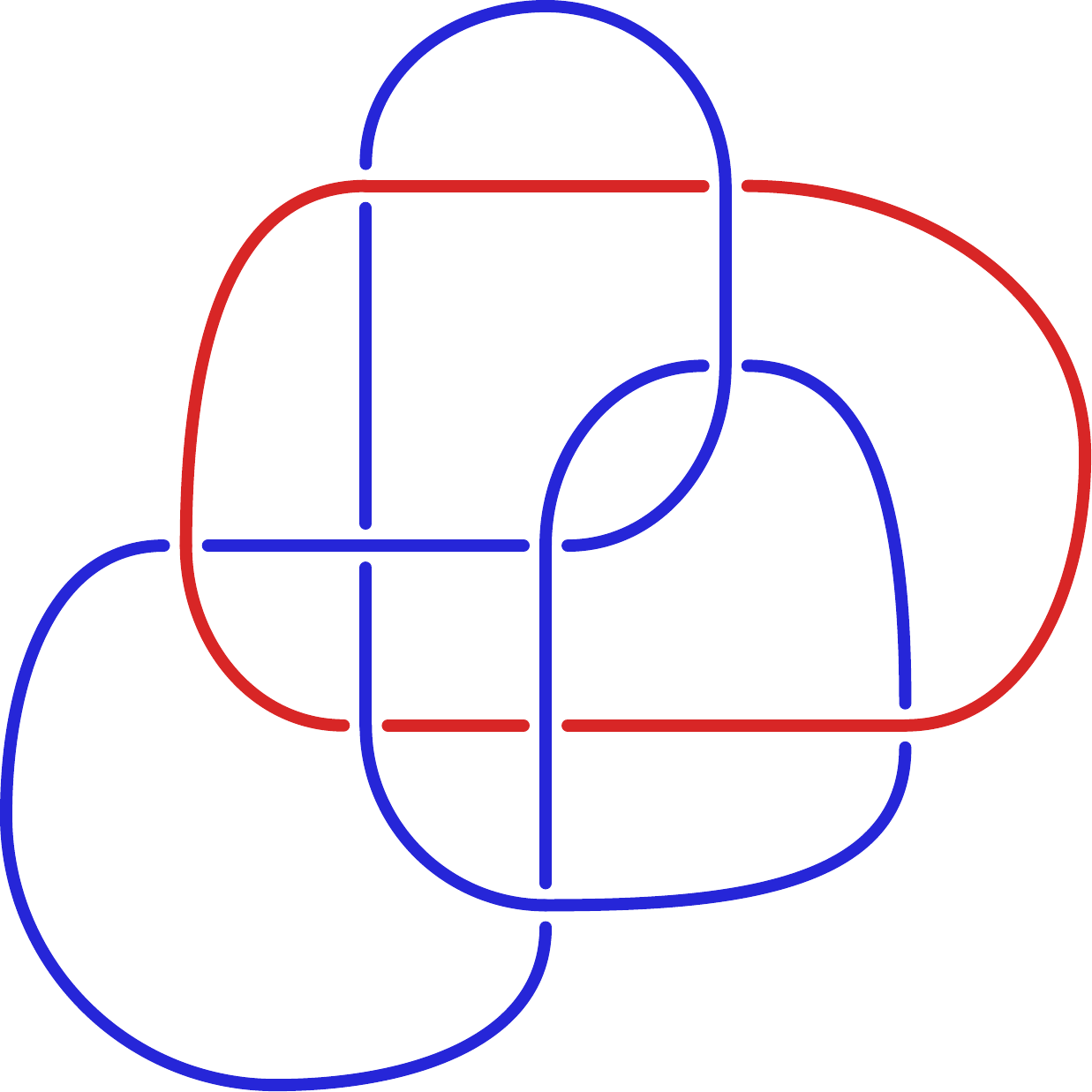}
\\
$$
\otet02_{0001}(K4a1) \qquad \quad
\otet04_{0000} \qquad \qquad \quad
\otet04_{0001}(L6a2) \quad
\otet08_{0002}(L10n46)
$$ 
\\
\includegraphics[height=0.14\textheight]{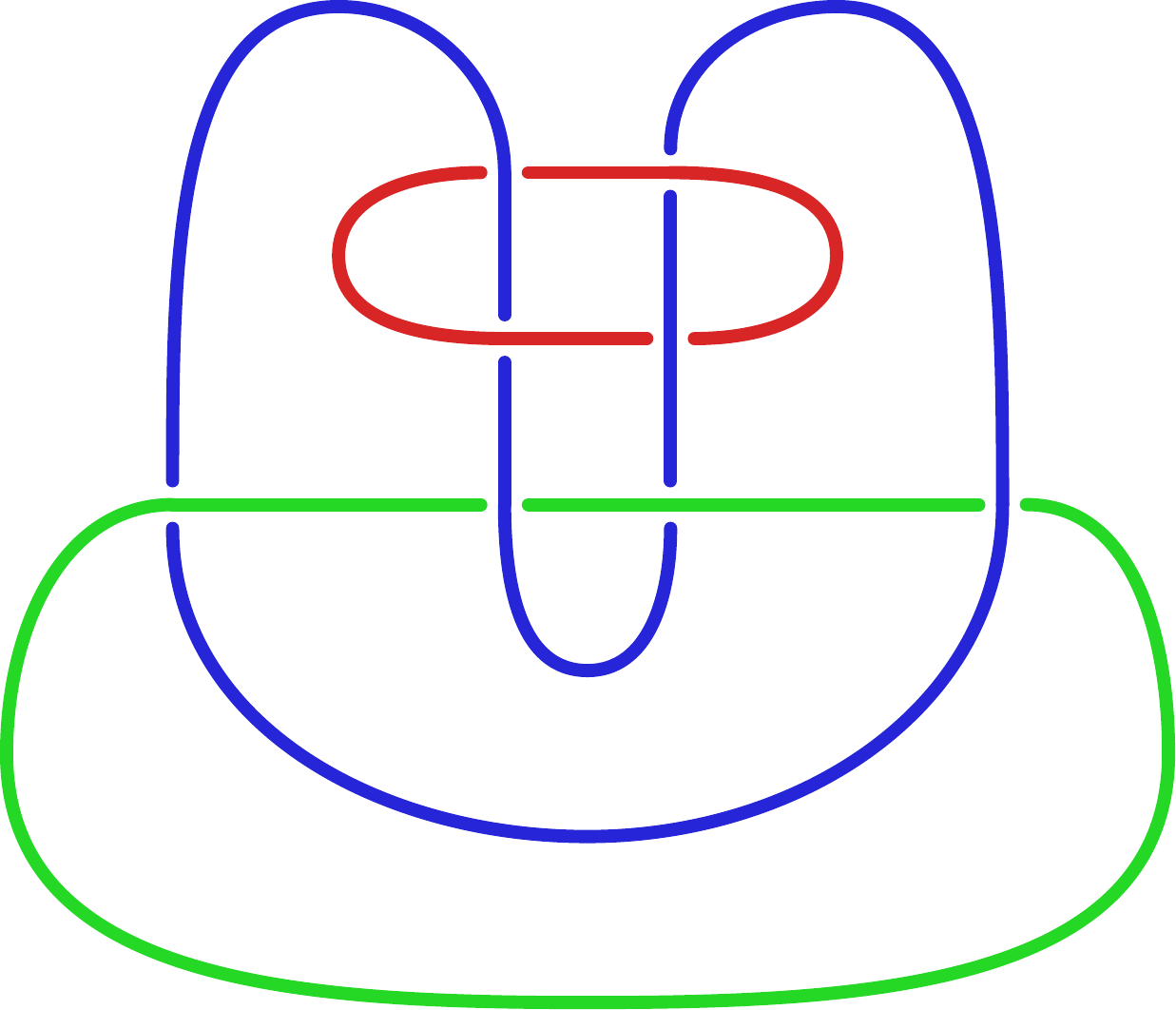}\hspace{2mm} 
\includegraphics[height=0.14\textheight]{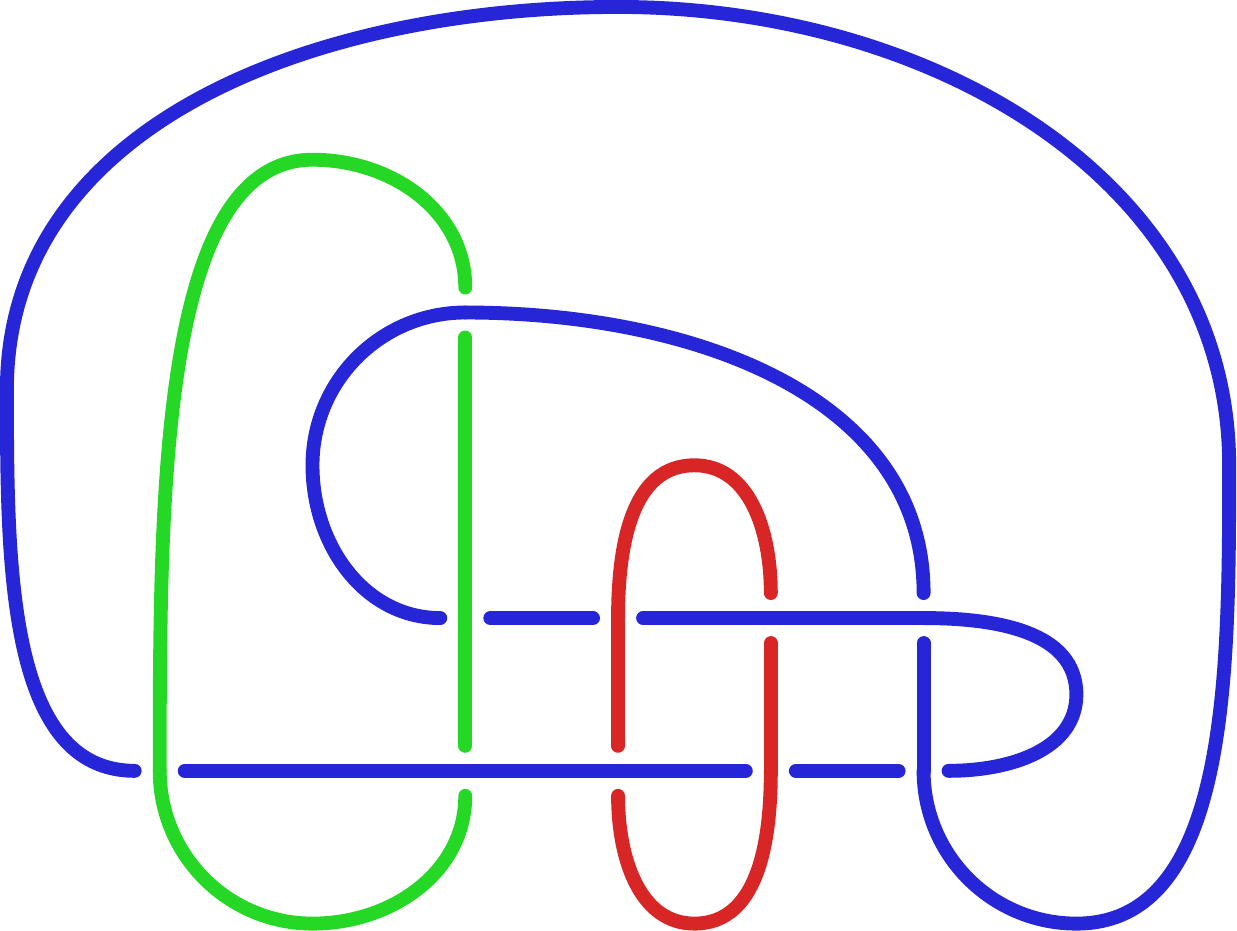}\hspace{2mm}
\includegraphics[height=0.14\textheight]{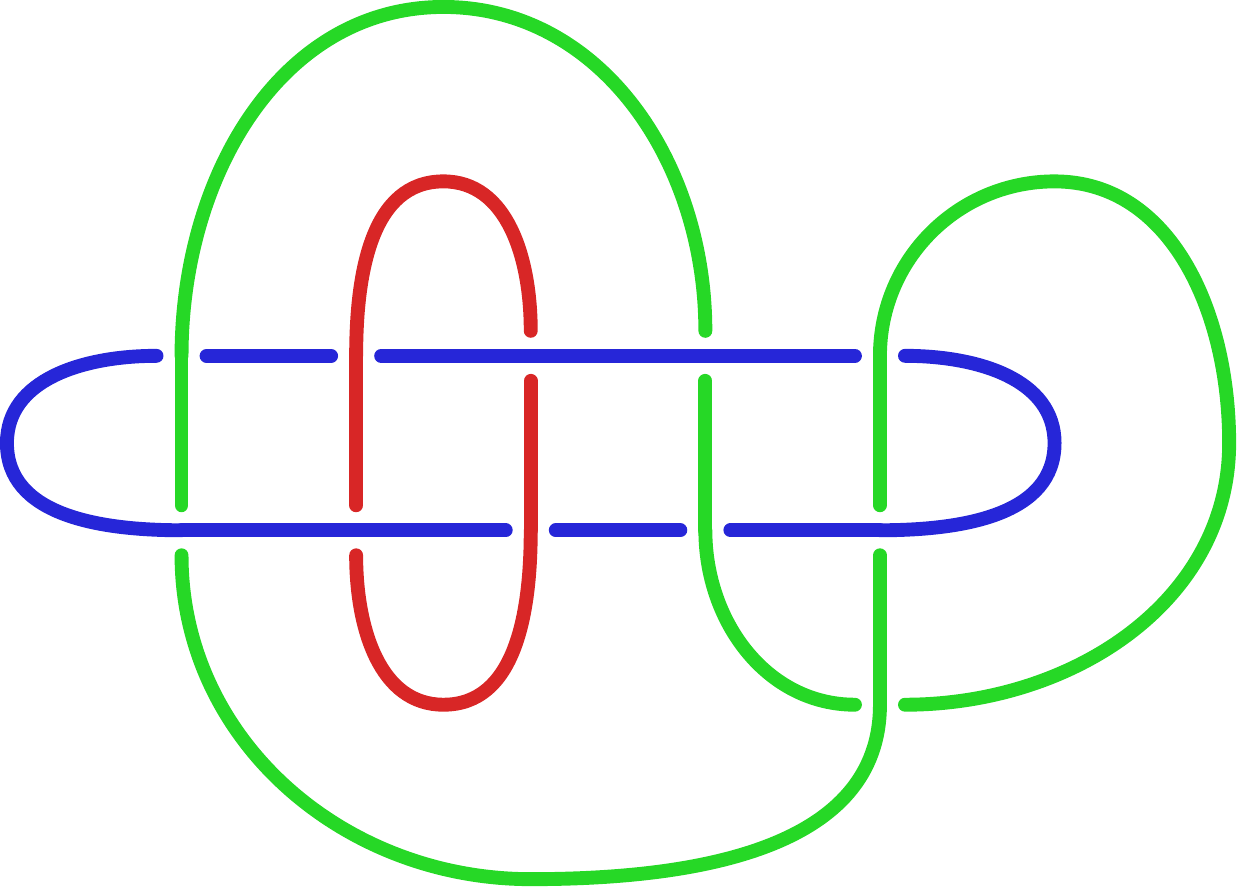}\hspace{2mm}
\includegraphics[height=0.14\textheight]{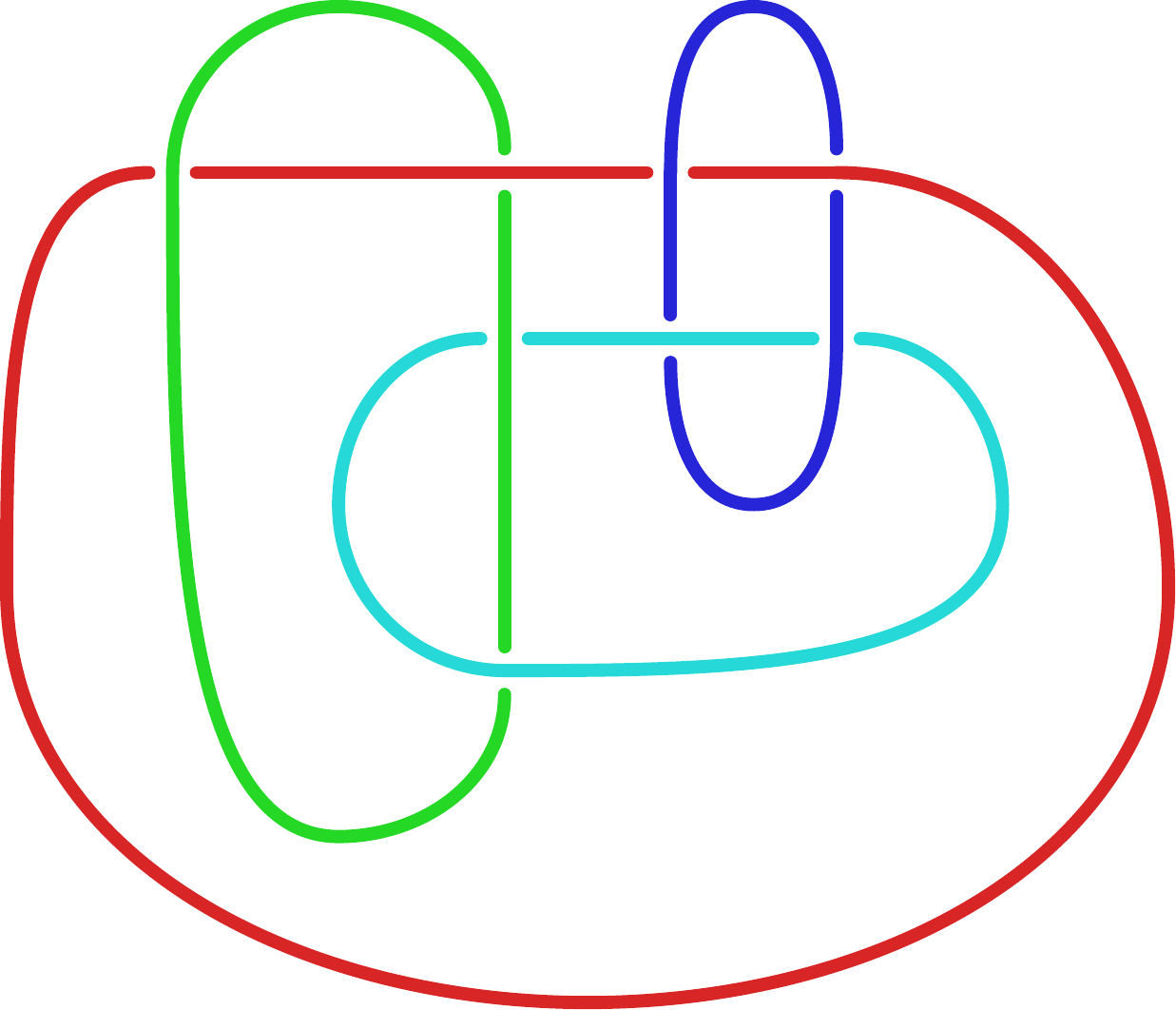}
\\
$$
\otet10_{0006}(L8a20) \qquad 
\otet10_{0042}(L10n88) \qquad 
\otet10_{0008}(L11n354) \qquad 
\otet10_{0011}(L8a21)
$$
\includegraphics[height=0.14\textheight]{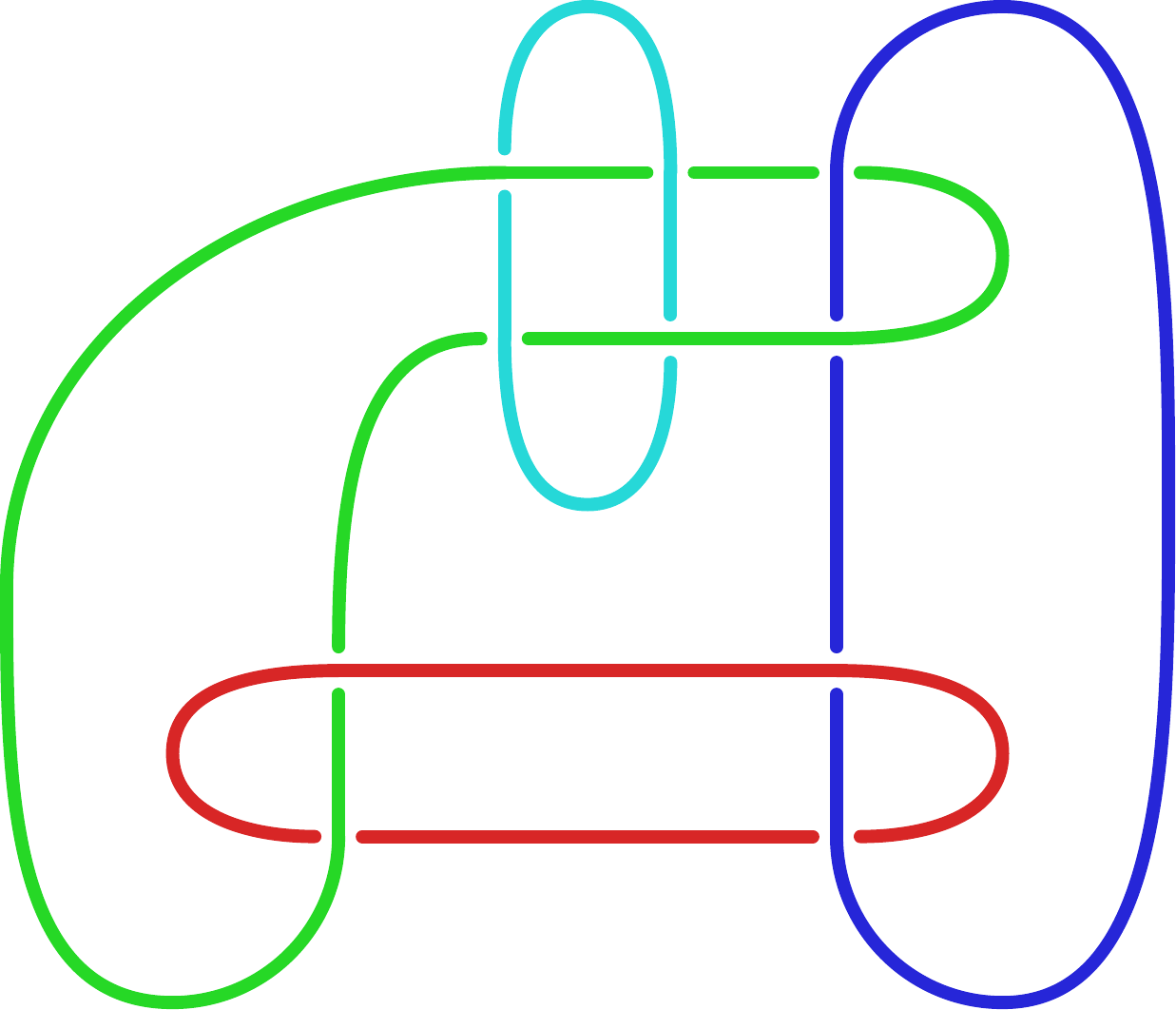}\hspace{5mm} 
\includegraphics[height=0.14\textheight]{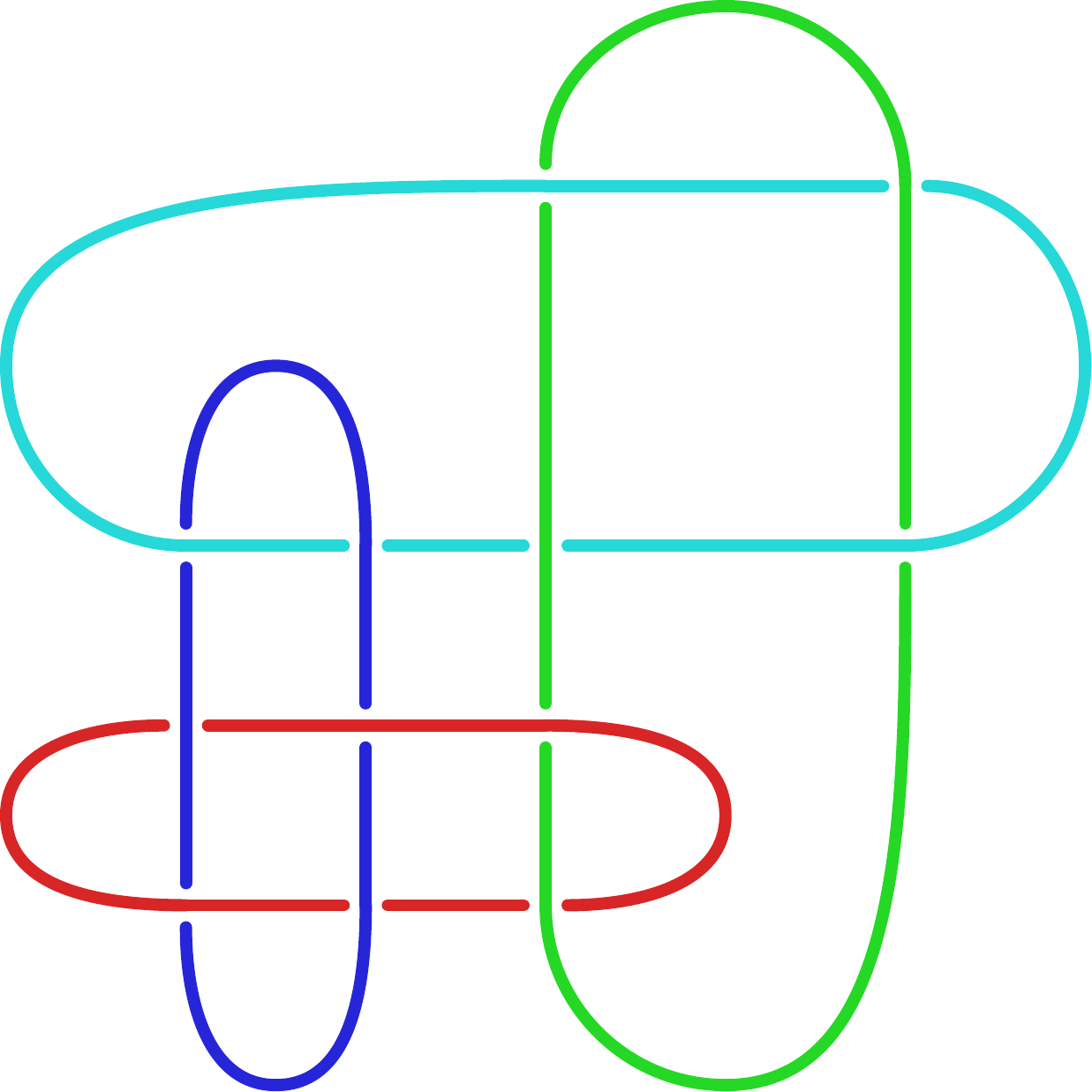}\hspace{5mm} 
\includegraphics[height=0.14\textheight]{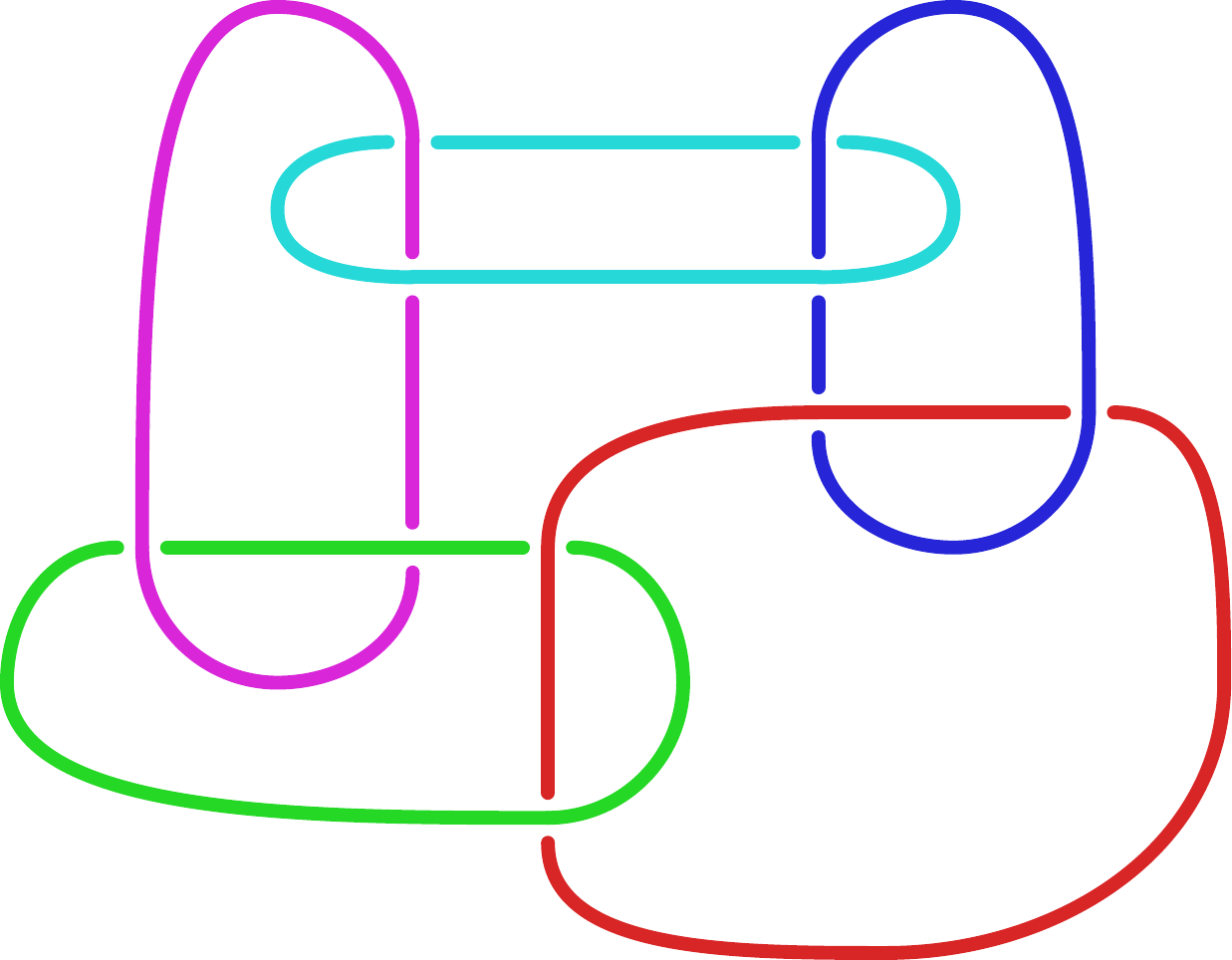}
\\
$$
\otet10_{0014}(L10n101) \qquad 
\otet10_{0028}(L12n2201) \qquad
\otet10_{0027}(L10n113)
$$
\includegraphics[height=0.15\textheight]{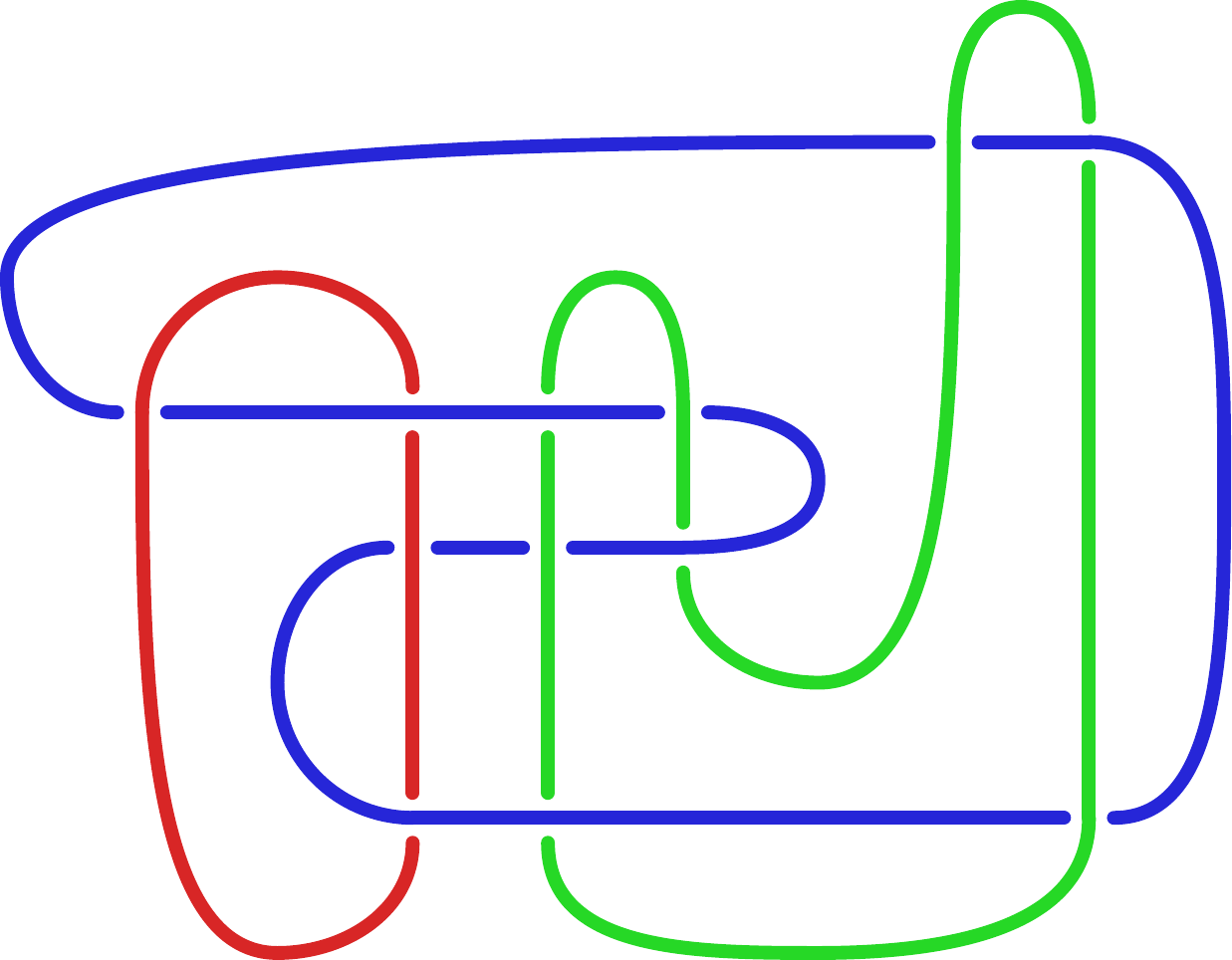}\hspace{5mm} 
\includegraphics[height=0.15\textheight]{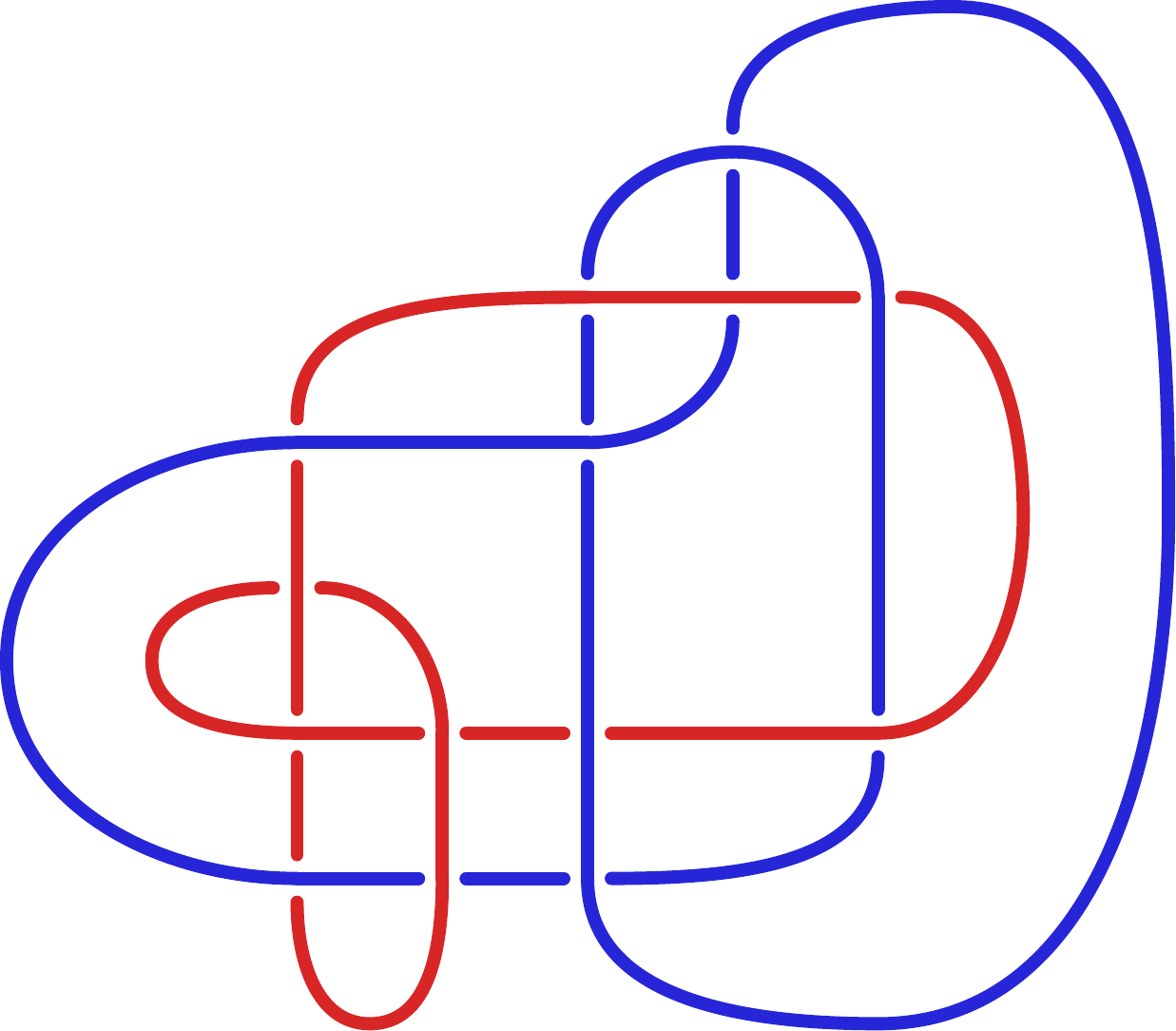} \hspace{5mm}
\includegraphics[height=0.15\textheight]{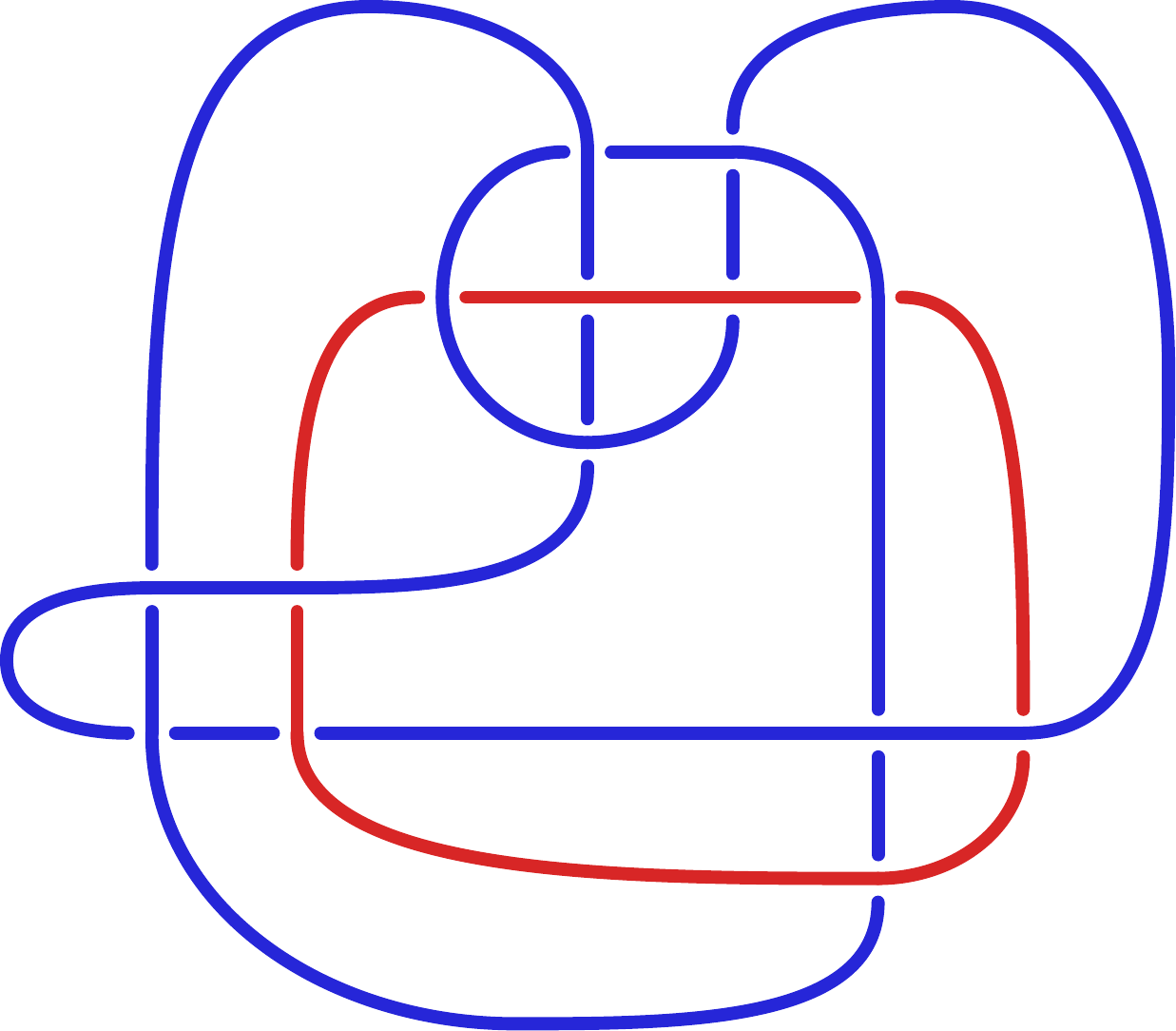}
\\
$$
\otet10_{0043}(L12n1739) \qquad 
\otet08_{0009}(L14n38547) \qquad
\otet08_{0001}(L14n24613)
$$
\includegraphics[height=0.15\textheight]{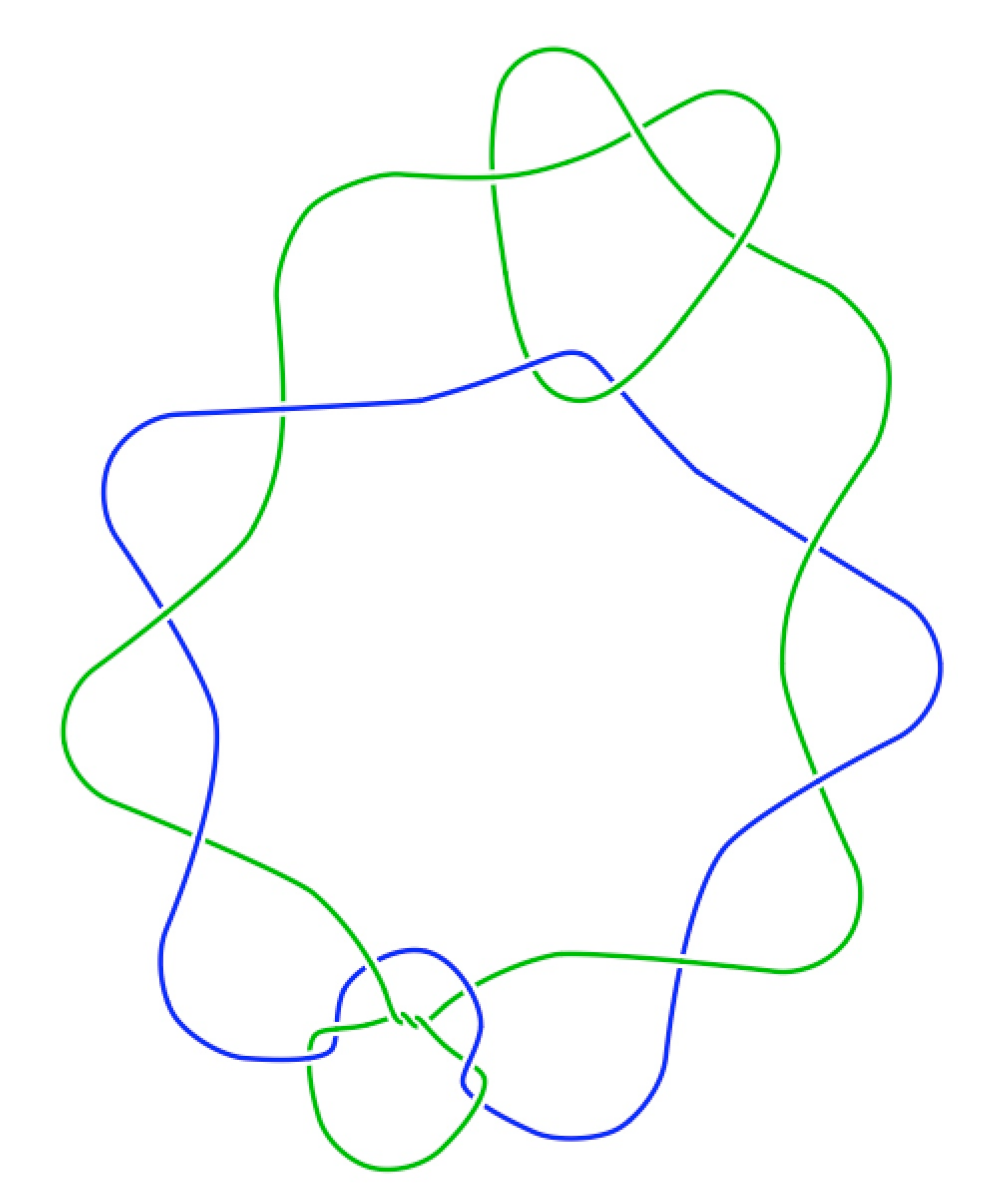}\hspace{5mm} 
\includegraphics[height=0.15\textheight]{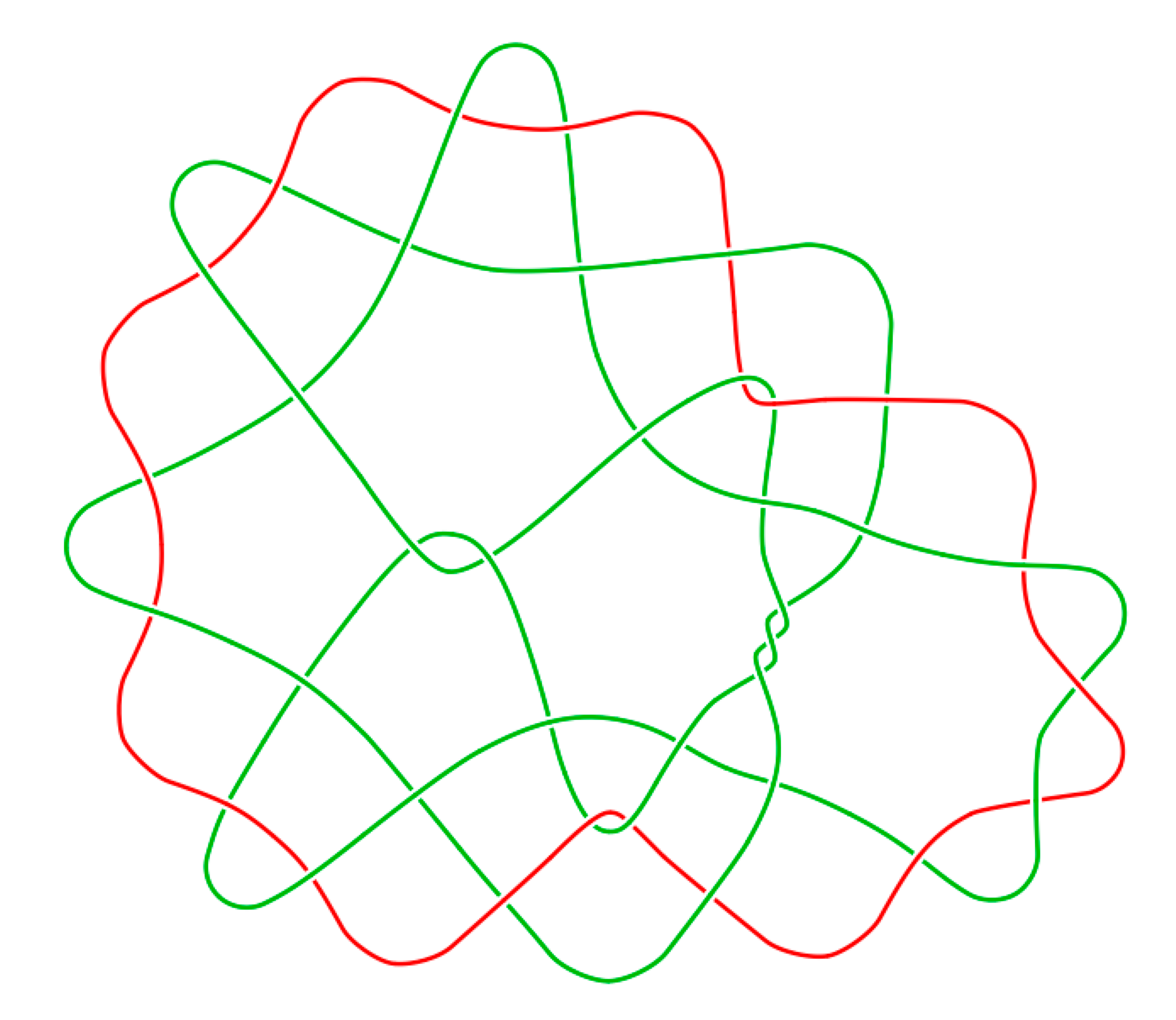} \hspace{5mm}
\includegraphics[height=0.15\textheight]{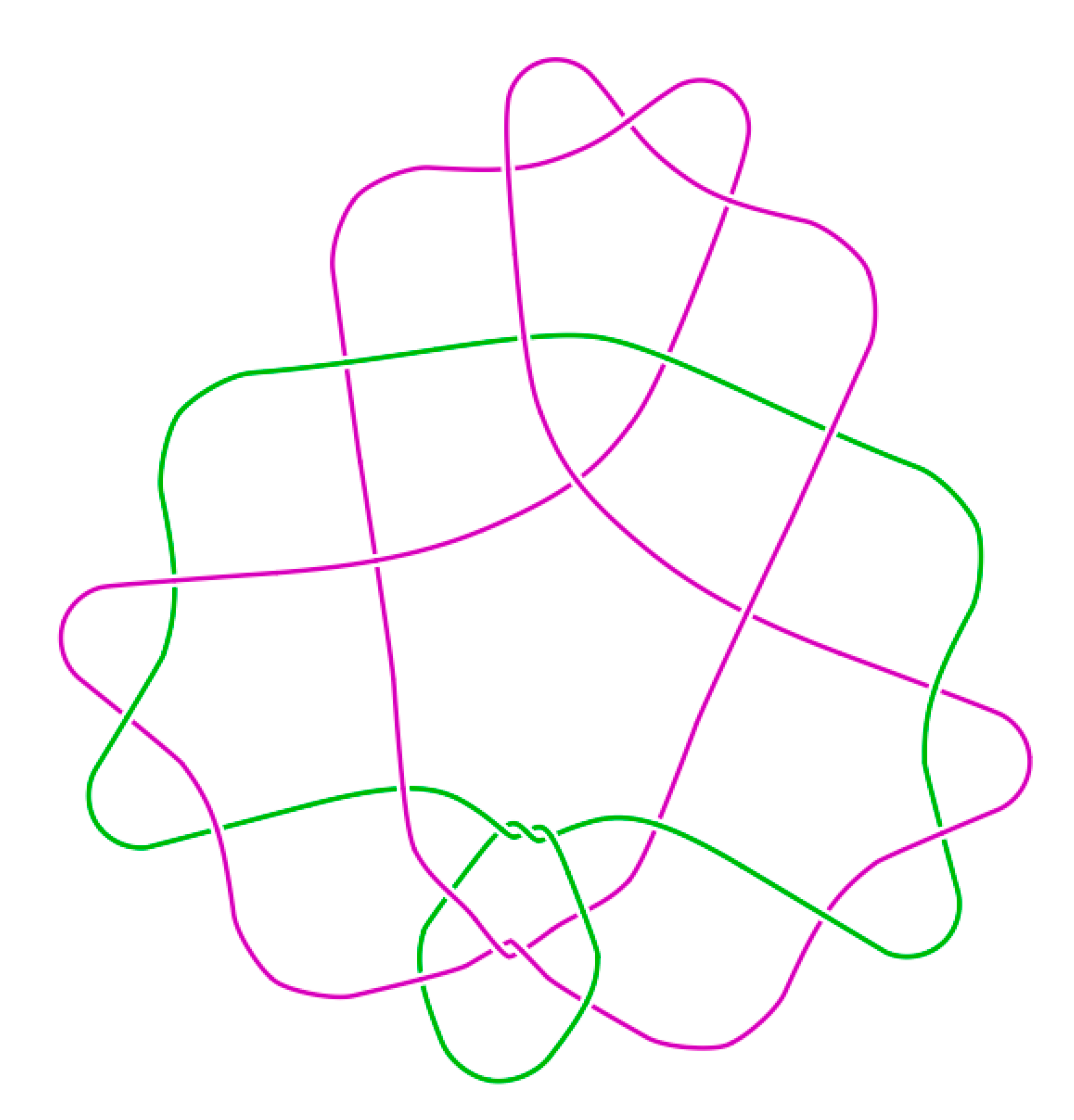}\hspace{5mm}
\includegraphics[height=0.15\textheight]{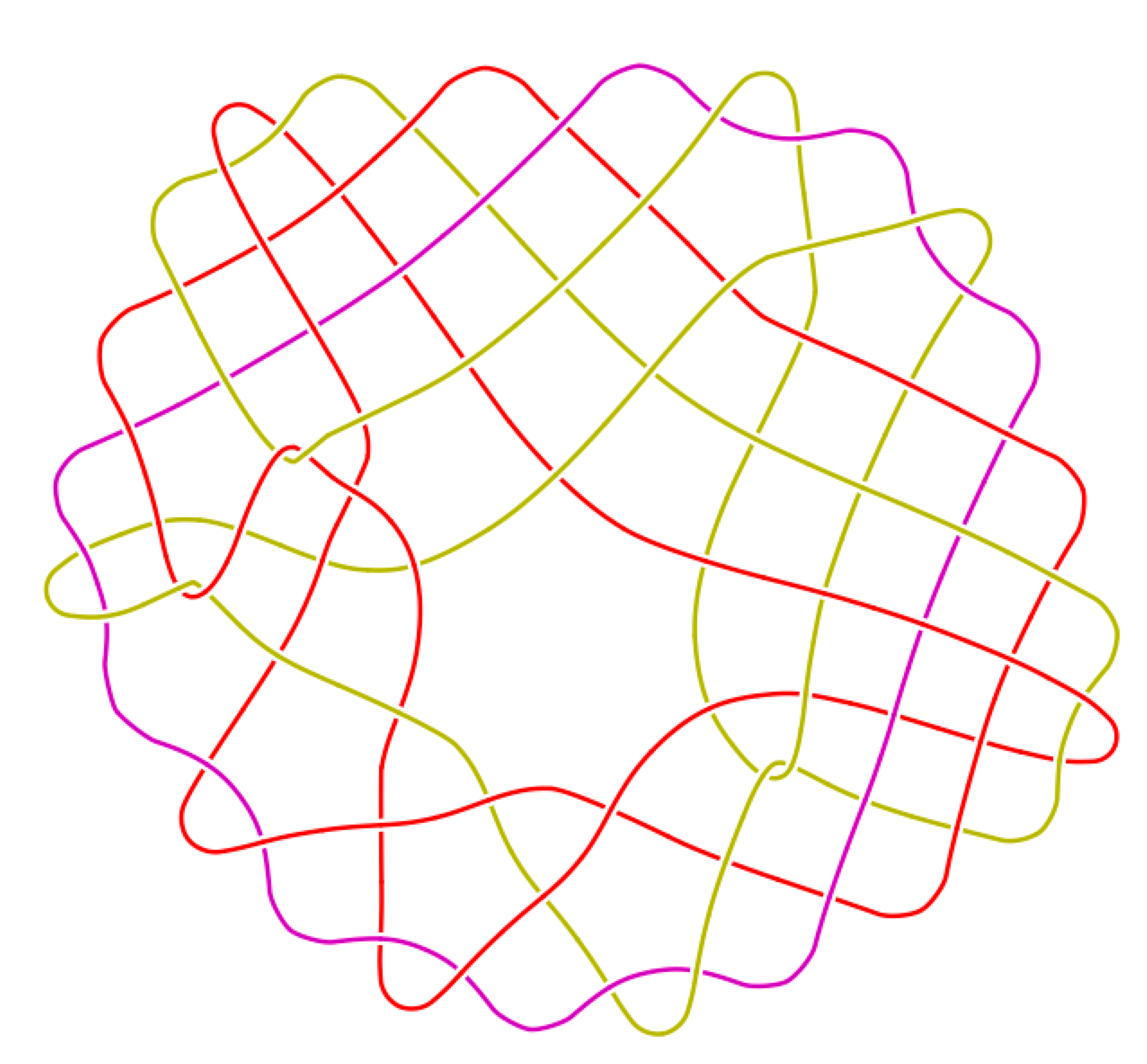}
\\
$$
\otet08_{0005} \qquad \qquad \quad
\otet10_{0007} \qquad \qquad \qquad \quad
\otet10_{0003} \qquad \qquad \quad
\otet10_{0025}
$$
\caption{The tetrahedral links with at most 10 tetrahedra.}
\label{f.tetlinks}
}
\end{figure}

\begin{figure}[!hptb] 
\centering{
\includegraphics[height=0.14\textheight]{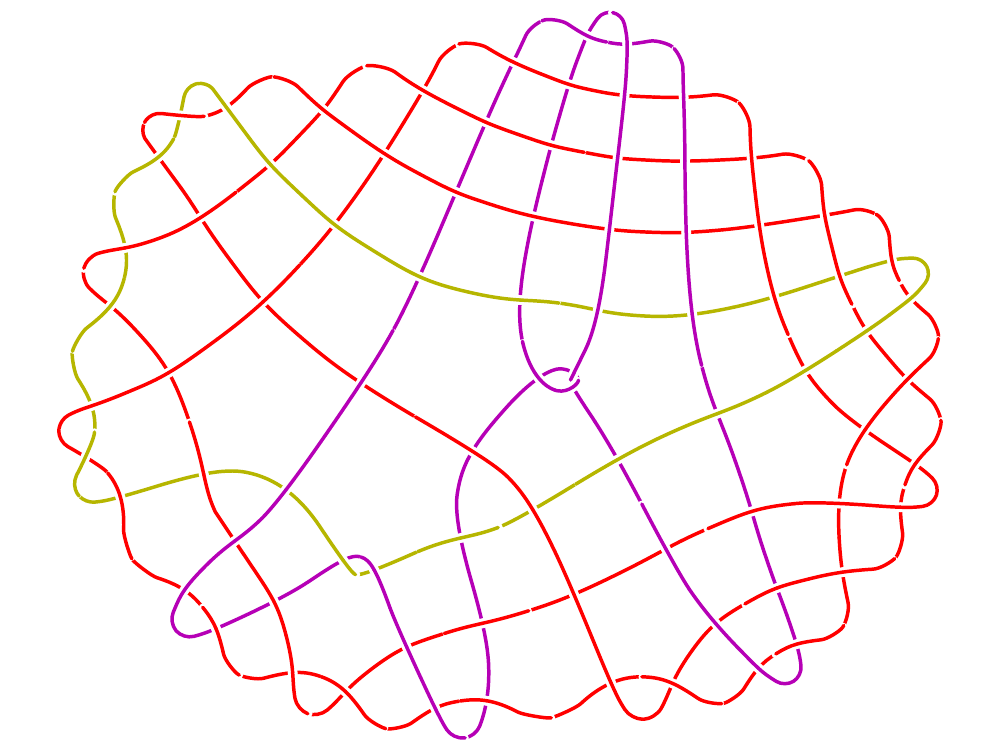}\hspace{5mm} 
\includegraphics[height=0.14\textheight]{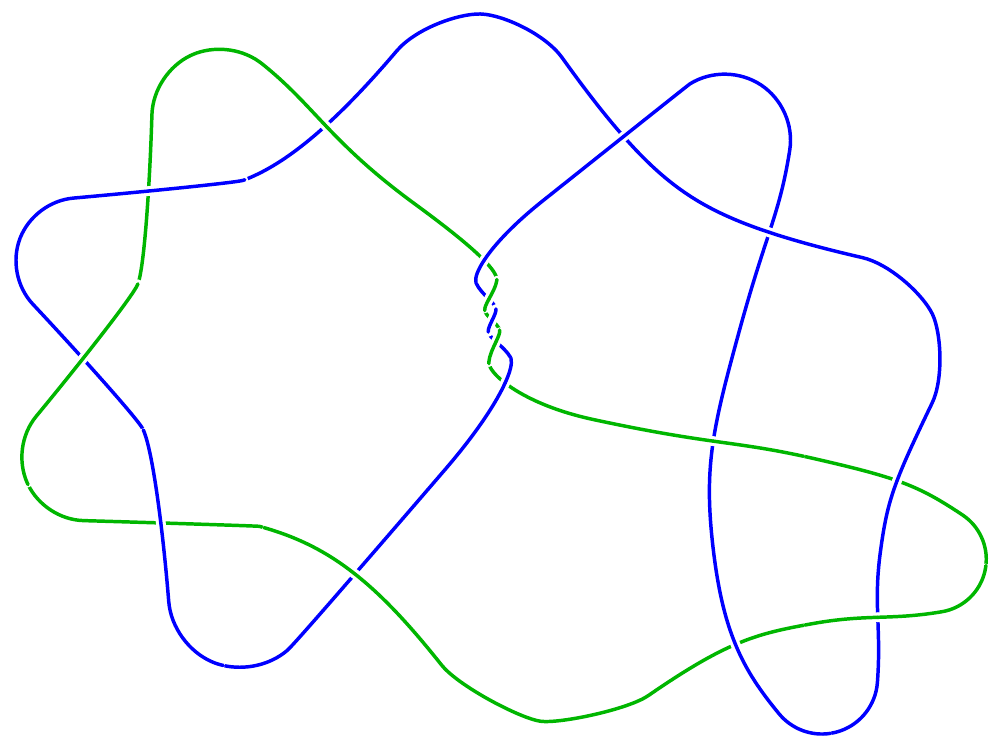}\hspace{5mm}
\\
$$
\otet12_{0001} \qquad \qquad \qquad \quad \otet12_{0005}
$$
\includegraphics[height=0.14\textheight]{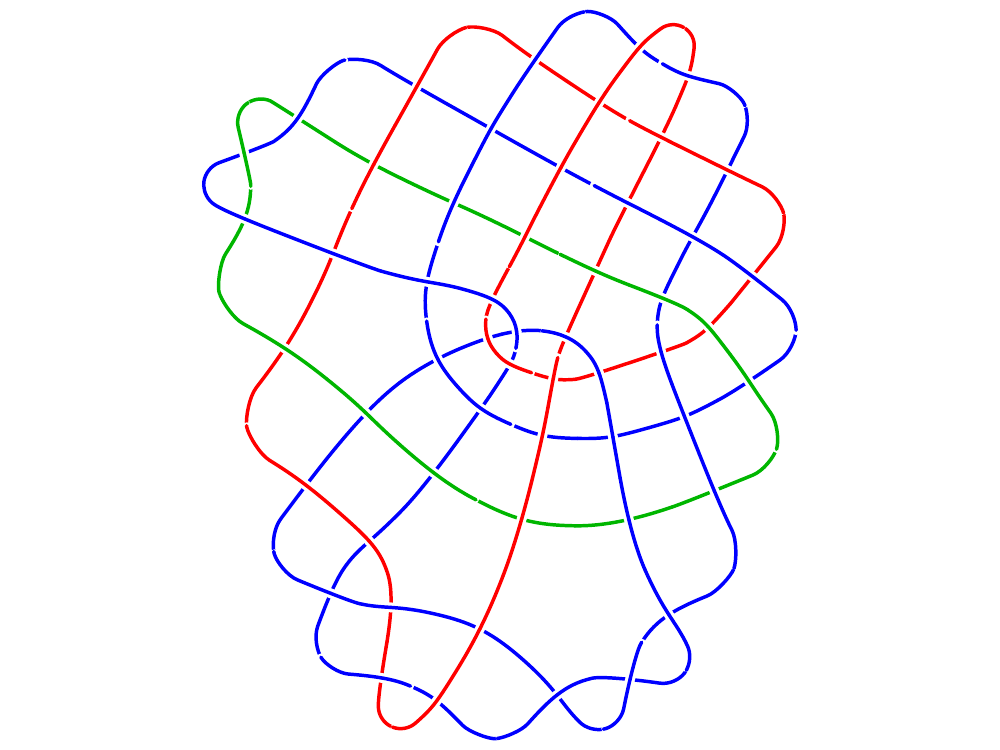}\hspace{5mm}
\includegraphics[height=0.14\textheight]{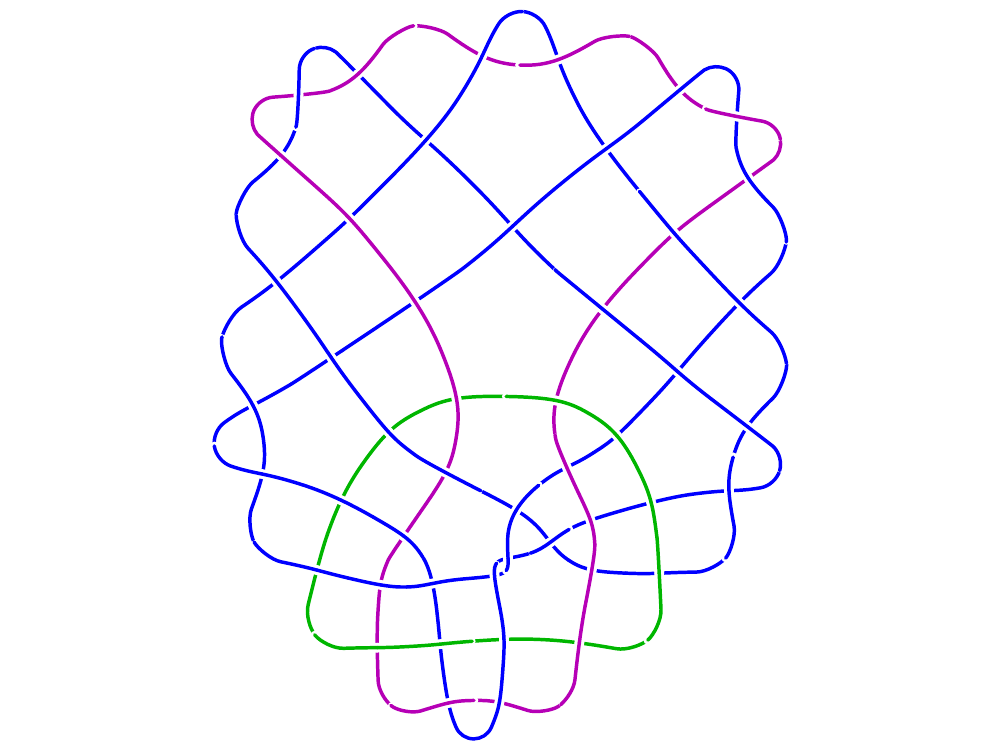}\hspace{5mm}
\\
$$
\otet12_{0006} \qquad \qquad \qquad \quad \otet12_{0010}
$$
\\
\includegraphics[height=0.14\textheight]{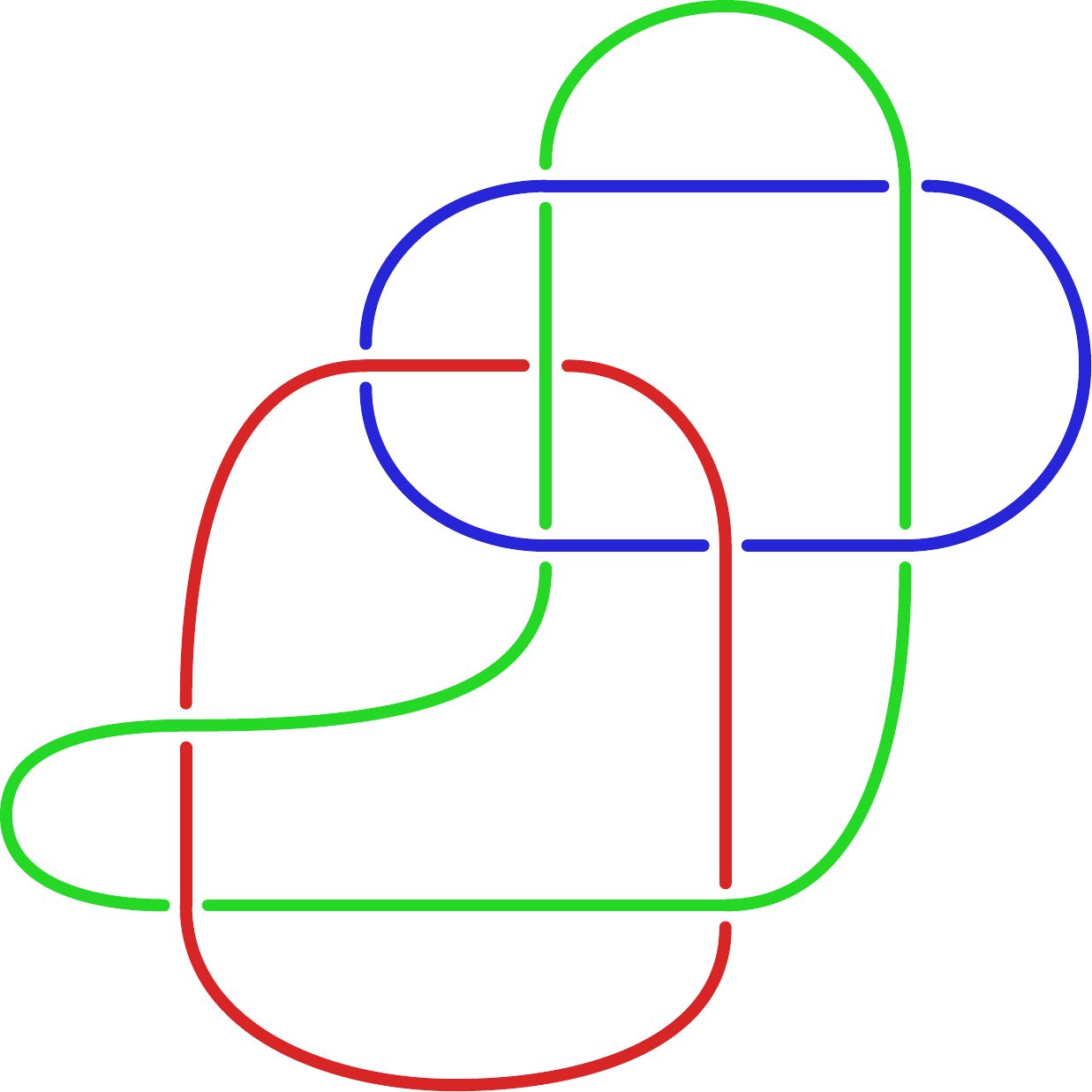}\hspace{12mm} 
\includegraphics[height=0.14\textheight]{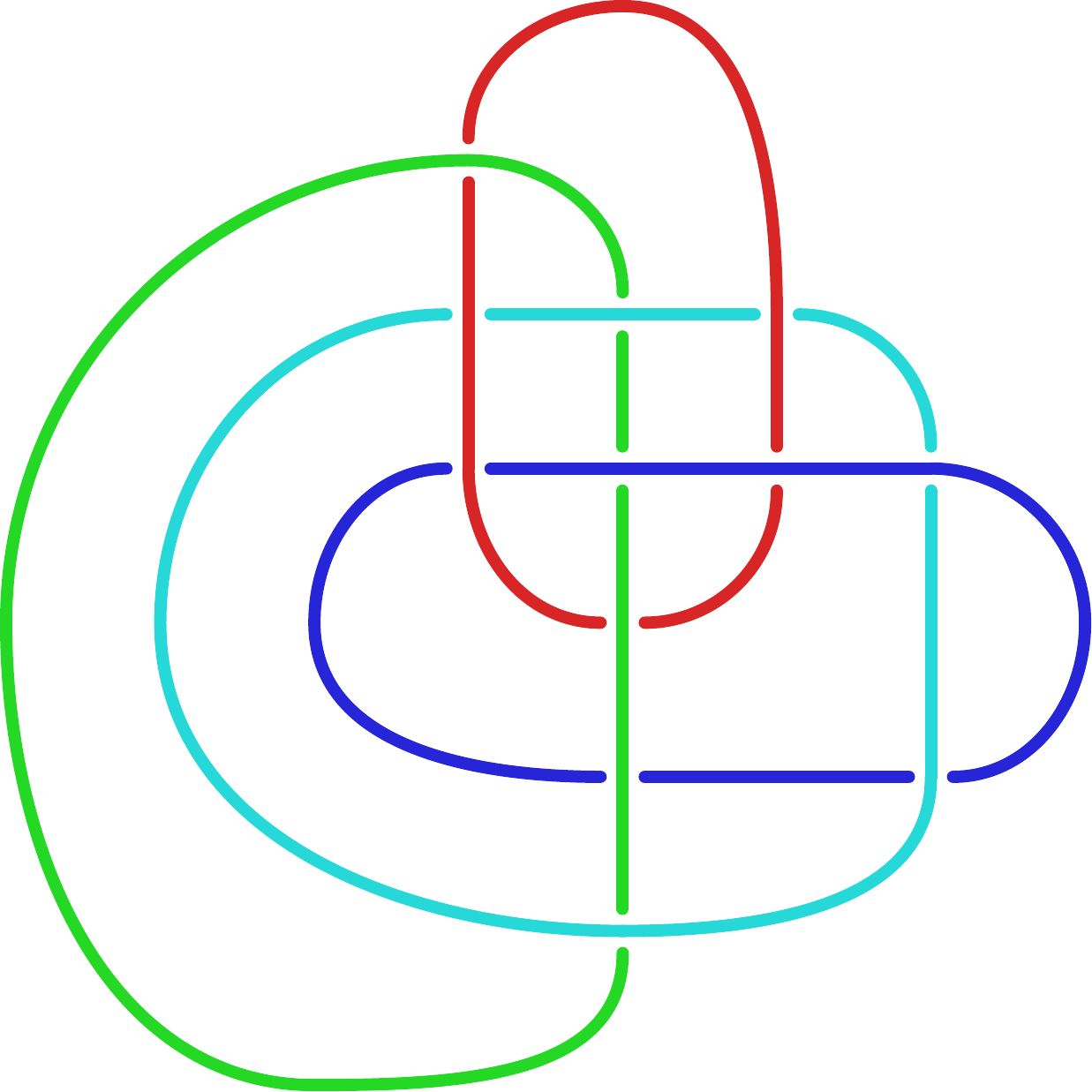}\hspace{12mm}
\includegraphics[height=0.14\textheight]{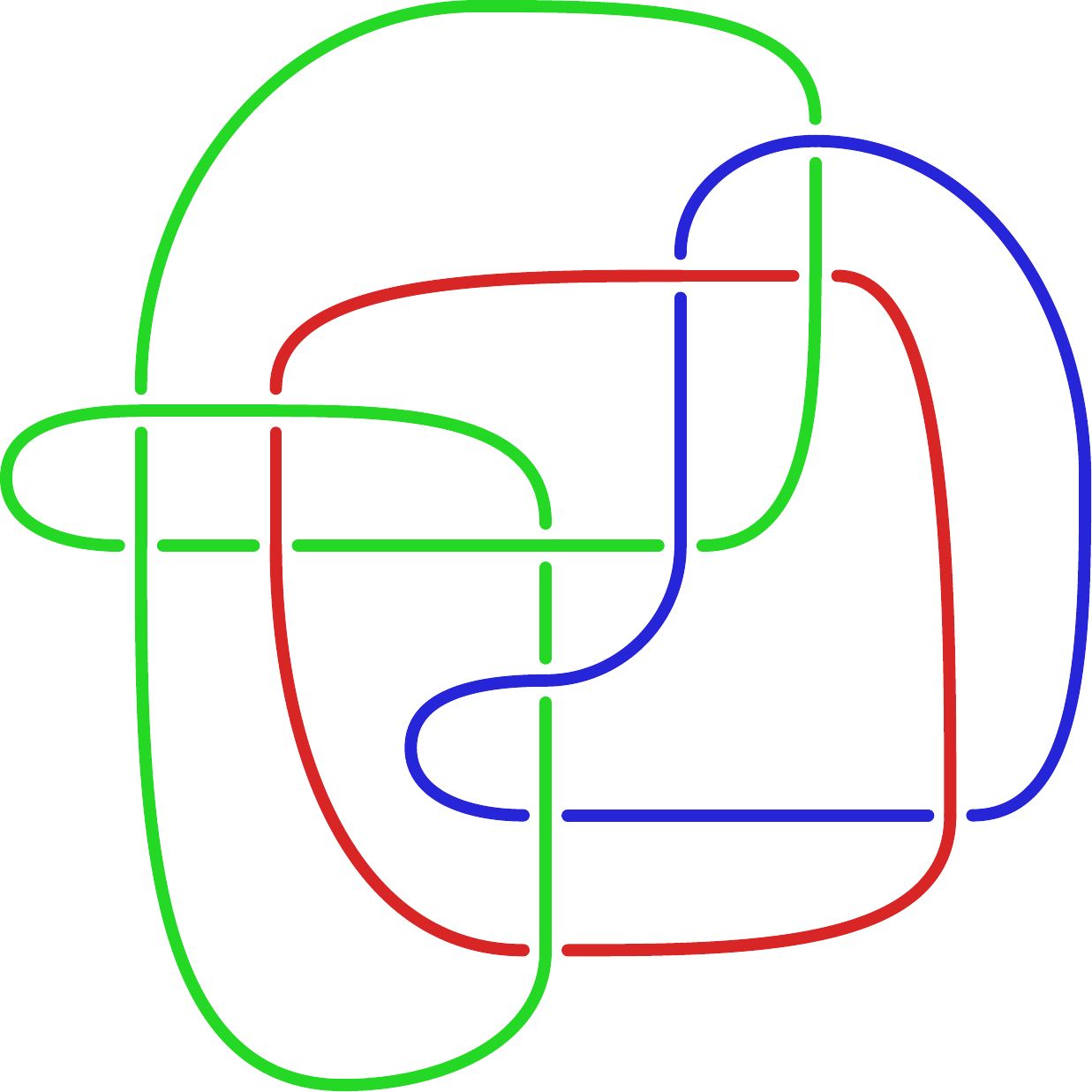}\hspace{2mm}
\\
$$
\otet12_{0007}(L10a157) \qquad
\otet12_{0009}(L12n2208) \qquad
\otet12_{0018}(L13n9382) 
$$
\caption{The tetrahedral links with 12 tetrahedra.}
\label{f.tetlinks12}
}
\end{figure}

\begin{figure}[!hptb] 
\centering{
\includegraphics[height=0.20\textheight]{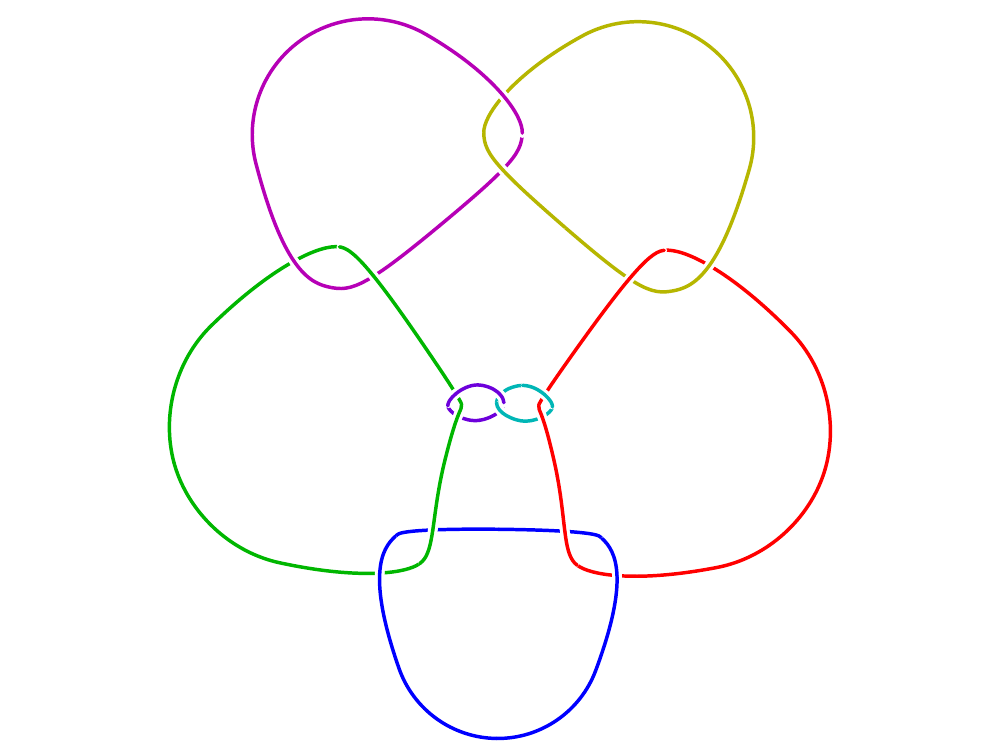}
\caption{The remarkable link $\otet20_{0570}$.} 
\label{fig.remarkable}
}
\end{figure}

\subsection*{Acknowledgment}
E.F., V.T. and A.V. were supported in part by the Ministry of Education and 
Science of the Russia (the state task number 1.1260.2014/K) and RFBR grant 
16-01-00414. S.G. was 
supported in part by a National Science Foundation grant DMS-14-06419. M. G. 
was supported in part by a National Science Foundation grant DMS-11-07452. 

We would like to thank Frank Swenton for adding the features to the
Kirby calculator \cite{kirbycalculator} necessary to draw the new 
tetrahedral links, and Cameron Gordon for completing the proof of Lemma
\ref{lem.homology.link}.

\bibliographystyle{hamsalpha}
\bibliography{biblio}

\newcommand{\etalchar}[1]{$^{#1}$}
\providecommand{\bysame}{\leavevmode\hbox to3em{\hrulefill}\thinspace}
\providecommand{\href}[2]{#2}
\providecommand{\eprint}{\begingroup \urlstyle{rm}\Url}
\begin{thebibliography}{VTF14b}

\bibitem[Ani05]{A}
S.~Anisov, \emph{Exact values of complexity for an infinite number of
  3-manifolds}, Moscow Math. J.. \textbf{5} (2005), no.~2, 305--310.

\bibitem[AR92]{AR:dodecahedral}
I.~R. Aitchison and J.~H. Rubinstein, \emph{Combinatorial cubings, cusps, and
  the dodecahedral knots}, Topology '90 ({C}olumbus, {OH}, 1990), Ohio State
  Univ. Math. Res. Inst. Publ., vol.~1, de Gruyter, Berlin, 1992, pp.~17--26.

\bibitem[BMR95]{BMR95}
B.~H. Bowditch, C.~Maclachlan, and A.~W. Reid, \emph{Arithmetic hyperbolic
  surface bundles}, Math. Ann. \textbf{302} (1995), no.~1, 31--60.

\bibitem[BP14]{Burton:edges}
Benjamin~A. Burton and William Pettersson, \emph{An edge-based framework for
  enumerating 3-manifold triangulations}, 2014, \eprint{arXiv:1412.2169},
  Preprint.

\bibitem[Bre97]{bredon:top_and_geo}
Glen~E. Bredon, \emph{Topology and geometry}, Graduate Texts in Mathematics,
  vol. 139, Springer-Verlag, New York, 1997, Corrected third printing of the
  1993 original.

\bibitem[Bur]{Regina}
Benjamin Burton, \emph{{R}egina, software for $3$-manifold topology and normal
  surface theory}, \url{http://regina.sourceforge.net} (30/01/2015).

\bibitem[Bur11]{burton:encode}
Benjamin~A. Burton, \emph{The {P}achner graph and the simplification of
  3-sphere triangulations}, Computational geometry ({SCG}'11), ACM, New York,
  2011, \eprint{arXiv:1110.6080}, pp.~153--162.

\bibitem[Bur14]{burton:x101}
\bysame, \emph{A duplicate pair in the {S}nap{P}ea census}, Exp. Math.
  \textbf{23} (2014), no.~2, 170--173.

\bibitem[BZ85]{BZ}
Gerhard Burde and Heiner Zieschang, \emph{Knots}, de Gruyter Studies in
  Mathematics, vol.~5, Walter de Gruyter \& Co., Berlin, 1985.

\bibitem[CDW]{SnapPy}
Marc Culler, Nathan~M. Dunfield, and Jeffrey~R. Weeks, \emph{Snap{P}y, a
  computer program for studying the topology of $3$-manifolds}, Available at
  \url{http://snappy.computop.org} (30/01/2015).

\bibitem[CHW99]{CHW}
Patrick~J. Callahan, Martin~V. Hildebrand, and Jeffrey~R. Weeks, \emph{A census
  of cusped hyperbolic {$3$}-manifolds}, Math. Comp. \textbf{68} (1999),
  no.~225, 321--332, With microfiche supplement.

\bibitem[DHL14]{DHL}
Nathan~M. Dunfield, Neil~R. Hoffman, and Joan~E. Licata, \emph{Asymmetric
  hyperbolic {L}-spaces, {H}eegaard genus, and {D}ehn filling}, 2014,
  \eprint{arXiv:1407.7827}, Preprint.

\bibitem[EP88]{EP}
D.~B.~A. Epstein and R.~C. Penner, \emph{Euclidean decompositions of noncompact
  hyperbolic manifolds}, J. Differential Geom. \textbf{27} (1988), no.~1,
  67--80.

\bibitem[GHH08]{goodmanHeardHodgson:CommensuratorsOfCuspedHyperbolicManifolds}
Oliver Goodman, Damian Heard, and Craig Hodgson, \emph{Commensurators of cusped
  hyperbolic manifolds}, Experiment. Math. \textbf{17} (2008), no.~3, 283--306.

\bibitem[GL89]{Gordon}
C.~McA. Gordon and J.~Luecke, \emph{Knots are determined by their complements},
  J. Amer. Math. Soc. \textbf{2} (1989), no.~2, 371--415.

\bibitem[Goe]{Goerner:tetcensus}
Matthias Goerner, \url{http://unhyperbolic.org/tetrahedralCensus/}.

\bibitem[Goe15]{Goerner}
\bysame, \emph{Regular {T}essellation {L}ink {C}omplements}, Experiment. Math.
  \textbf{24} (2015), no.~2, 225--246, \eprint{http://arxiv.org/abs/1406.2827}.

\bibitem[HIK{\etalchar{+}}13]{hikmot}
N.~Hoffman, K.~Ichihara, M.~Kashiwagi, H.~Masai, S.~Oishi, and A.~Takayasu,
  \emph{Verified computations for hyperbolic 3-manifolds}, 2013,
  \eprint{arXiv:1310.3410},
  \url{http://www.oishi.info.waseda.ac.jp/~takayasu/hikmot}.

\bibitem[IM]{fefgen}
K.~Ichihara and H.~Masai, \emph{{\texttt{fef\_gen.py}}},
  \url{http://www.math.chs.nihon-u.ac.jp/~ichihara/ExcAlt/}.

\bibitem[Lan02]{Lang}
Serge Lang, \emph{Algebra}, third ed., Graduate Texts in Mathematics, vol. 211,
  Springer-Verlag, New York, 2002.

\bibitem[LST08]{LuoSchleimerTillmann:virtualGeometricTrig}
Feng Luo, Saul Schleimer, and Stephan Tillmann, \emph{Geodesic ideal
  triangulations exist virtually}, Proc. Amer. Math. Soc. \textbf{136} (2008),
  no.~7, 2625--2630.

\bibitem[Mat03]{Matveev}
Sergei Matveev, \emph{Algorithmic topology and classification of 3-manifolds},
  Algorithms and Computation in Mathematics, vol.~9, Springer-Verlag, Berlin,
  2003.

\bibitem[MP06]{MP:magicmfdfilling}
Bruno Martelli and Carlo Petronio, \emph{Dehn filling of the ``magic''
  3-manifold}, Comm. Anal. Geom. \textbf{14} (2006), no.~5, 969--1026.

\bibitem[MPR14]{MPR:fivechainfilling}
Bruno Martelli, Carlo Petronio, and Fionntan Roukema, \emph{Exceptional {D}ehn
  surgery on the minimally twisted five-chain link}, Comm. Anal. Geom.
  \textbf{22} (2014), no.~4, 689--735.

\bibitem[MR03]{MR}
Colin Maclachlan and Alan~W. Reid, \emph{The arithmetic of hyperbolic
  3-manifolds}, Graduate Texts in Mathematics, vol. 219, Springer-Verlag, New
  York, 2003.

\bibitem[NR92a]{neumannReid:Topology90Arithmetic}
Walter~D. Neumann and Alan~W. Reid, \emph{Arithmetic of hyperbolic manifolds},
  Topology '90 ({C}olumbus, {OH}, 1990), Ohio State Univ. Math. Res. Inst.
  Publ., vol.~1, de Gruyter, Berlin, 1992, pp.~273--310.

\bibitem[NR92b]{neumannReid:SmallVolOrbs}
\bysame, \emph{Notes on {A}dams' small volume orbifolds}, Topology '90
  ({C}olumbus, {OH}, 1990), Ohio State Univ. Math. Res. Inst. Publ., vol.~1, de
  Gruyter, Berlin, 1992, pp.~311--314.

\bibitem[PW00]{PetronioWeeks:partiallyFlatTrig}
Carlo Petronio and Jeffrey~R. Weeks, \emph{Partially flat ideal triangulations
  of cusped hyperbolic 3-manifolds}, Osaka J. Math. \textbf{37} (2000), no.~2,
  453--466.

\bibitem[RA01]{RV}
Marco Reni and Vesnin Andrei, \emph{Hidden symmetries of cyclic branched
  coverings of 2-bridge knots}, Rend. Istit. Mat. Univ. Trieste \textbf{XXXII}
  (2001), 289--304.

\bibitem[Rei91]{reid:arithmeticity}
Alan~W. Reid, \emph{Arithmeticity of knot complements}, J. London Math. Soc.
  (2) \textbf{43} (1991), no.~1, 171--184.

\bibitem[Swe]{kirbycalculator}
Frank Swenton,
  \url{http://community.middlebury.edu/~mathanimations/kirbycalculator/}.

\bibitem[VMF11]{VMF}
A.~Yu. Vesnin, S.~V. Matveev, and E.~A. Fominykh, \emph{Complexity of
  three-dimensional manifolds: exact values and estimates (in russian)}, Sib.
  \`Elektron. Mat. Izv. \textbf{8} (2011), 341--364.

\bibitem[VTF14a]{VTF1}
A.~Yu. Vesnin, V.~V. Tarkaev, and E.~A. Fominykh, \emph{On the complexity of
  three-dimensional cusped hyperbolic manifolds}, Doklady Math. \textbf{89}
  (2014), no.~3, 267--270.

\bibitem[VTF14b]{VTF2}
\bysame, \emph{Three-dimensional hyperbolic manifolds with cusps of complexity
  10 having maximal volume (in russian)}, Trudy Instituta Matematiki i
  Mekhaniki \textbf{20} (2014), no.~2, 74--87.

\bibitem[Wal11]{Walsh}
Genevieve~S. Walsh, \emph{Orbifolds and commensurability}, Interactions between
  hyperbolic geometry, quantum topology and number theory, Contemp. Math., vol.
  541, Amer. Math. Soc., Providence, RI, 2011, pp.~221--231.

\bibitem[Wee93]{Weeks}
Jeffrey~R. Weeks, \emph{Convex hulls and isometries of cusped hyperbolic
  {$3$}-manifolds}, Topology Appl. \textbf{52} (1993), no.~2, 127--149.

\end{thebibliography}

\end{document}